\begin{document}
\title{A gluing formula for the analytic torsion on singular spaces}

\author{Matthias Lesch}
\address{Mathematisches Institut,
Universit\"at Bonn,
Endenicher Allee 60,
53115 Bonn,
Germany}

\email{ml@matthiaslesch.de, lesch@math.uni-bonn.de}
\urladdr{www.matthiaslesch.de, www.math.uni-bonn.de/people/lesch}
\thanks{Supported by the Hausdorff Center for Mathematics, Bonn}
\dedicatory{To my family}

\subjclass[2010]{Primary 58J52; Secondary 58J05, 58J10, 58J35}
\keywords{Analytic Torsion, Determinants}

\begin{abstract} We prove a gluing formula for the analytic torsion
on non-compact (i.e. singular) Riemannian manifolds. 
Let $M= U\cup_{\pl M_1} M_1$, where $M_1$ is a compact manifold with boundary and
$U$ represents a model of the singularity. For general elliptic operators
we formulate a criterion, which can be checked solely on $U$, for the existence of a global heat expansion, in particular for the existence
 of the analytic torsion in case of the Laplace operator.
The main result then is the gluing formula for the analytic torsion.
Here, decompositions $M=M_1\cup_Y M_2$ along any compact closed hypersurface $Y$ with
$M_1, M_2$ both non-compact are allowed; however product structure near $Y$
is assumed. We work with the de Rham complex coupled to an arbitrary flat bundle $F$;
the metric on $F$ is not assumed to be flat. 
In an appendix the corresponding algebraic gluing formula is proved.
As a consequence we obtain a framework for proving a Cheeger-M\"uller type Theorem
for singular manifolds; the latter has been the main motivation
for this work.

The main tool is Vishik's theory of moving boundary value problems for the
de Rham complex which has also been successfully applied to Dirac type
operators and the eta invariant by J. Br\"uning and the author. 
The paper also serves as a new, self--contained, and brief approach to Vishik's
important work.
\end{abstract}

\maketitle
\tableofcontents
\listoffigures

\section{Introduction}\label{s:intro}     
The Cheeger--M\"uller Theorem \cite{Che:ATH, Mul:ATT,Mul:ATR} on the equality
of the analytic and combinatorial torsion is one of the cornerstones
of modern global analysis. To extend the theorem to certain singular manifolds
is an intriguing open challenge.

In his seminal work \cite{Che:SGS, Che:SGSR} Cheeger initiated the program
of ``extending the theory of the Laplace operator to certain Riemannian
spaces with singularities".
Since then a lot of work on this program has been done. It is impossible
to give a proper account here, but let us mention Br\"uning and Seeley
\cite{BruSee:ITF, BruSee:RES}, Melrose and collaborators \cite{Mel:APS},
and Schulze and collaborators \cite{Sch:PDO}.
While the basic spectral theory (index theory, heat kernel analysis)
for several types of singularities (cones \cite{Les:OFT}, cylinders
 \cite{Mel:APS}, cusps \cite{Mul:STR}, edges \cite{Maz:ETD})
is fairly well understood, an analogue of the Cheeger-M\"uller Theorem has not yet
been established for any type of singular manifold 
(except compact manifolds with boundary).

We will not solve this problem in this paper. However, we will provide 
a framework for attacking the problem.  

To describe this we must go back a little. 
Let $M$ be a Riemannian manifold (boundaryless but not necessarily compact,
also the \emph{interior} of a manifold with boundary is allowed) and let $P^0$
be an elliptic differential operator 
acting on the sections $\Gamma^\infty(E)$ of the Hermitian vector bundle $E$. 
We consider $P^0$ as an unbounded operator in the Hilbert space $L^2(M,E)$ of $L^2$--sections of $E$. 
Moreover, we assume $P^0$ to be bounded below; e.g. $P^0=D^tD$ for an elliptic operator $D$.
Fix a bounded below self--adjoint extension $P\ge -C>-\infty$.

$e^{-tP}$ is an integral operator with a smooth kernel $k_t(x,y)$ which on the
diagonal has a pointwise asymptotic expansion
\begin{equation}\label{eq:AsympInterior}
    k_t(x,x)\sim_{t\searrow 0} \sum_{j=0}^\infty a_j(x) \, t^{\frac{j-\dim
    M}{\ord P}}.
\end{equation} 
This asymptotic expansion is \emph{uniform on compact subsets
of $M$} and hence if e.g. $M$ is compact it may be integrated over the
manifold to obtain an asymptotic expansion for the trace of $e^{-tP}$.
For general non-compact $M$ one cannot expect the operator $e^{-tP}$ to be of
trace class. Even if it is of trace class and even if the coefficients
$a_j(x)$ in Eq.\ \eqref{eq:AsympInterior} are integrable, integration of
Eq.\ \eqref{eq:AsympInterior} does not necessarily lead to an asymptotic
expansion of $\Tr\bigl(e^{-tP}\bigr)$. 
It is therefore a fundamental problem to give criteria which ensure that
$e^{-tP}$ is of trace class and such that there is an asymptotic expansion
\begin{equation}\label{eq:AsympGlobal}
 \Tr\bigl(e^{-t P}\bigr)\sim_{t\searrow 0} 
 \sum_{\substack{
  \Re\ga\to  \infty\\
  0\le k\le k(\ga)}
  }
               a_{\ga k}\; t^\ga \log^k t.
\end{equation}
It is not realistic to find such criteria for arbitrary open manifolds.
Instead one looks at geometric differential operators on manifolds with
singular exits which occur in geometry. A rather generic description of
this situation can be given as follows: suppose that there is a compact
manifold $M_1\subset M$ and a \textquotedblleft well
understood\textquotedblright\ 
model manifold $U$ such that
\begin{equation}\label{eq:100a}  
   M= U\cup_{\pl M_1} M_1.   
\end{equation}
We list a couple  of examples for $U$ which are reasonably well understood 
and which are of geometrical significance:

\smallskip
\paragraph*{\itshape{1. Smooth boundary}} $U=(0,\eps)\times Y$ is a
cylinder with metric $dx^2+ g_Y$ over a smooth compact boundaryless manifold $Y$. 
Then $M$ is just the interior of a compact manifold with
boundary. To this situation the theory of elliptic boundary value
problems applies. Heat trace expansions are established, \eg for
all well-posed elliptic boundary value problems associated to
Laplace-type operators \cite{Gru:TEP}.

\paragraph*{\itshape{2. Isolated asymptotically conical singularities} }
 $U=(0,\eps)\times Y$ with metric $dx^2+ x^2 g_Y(x)$. Then
  $M$ is a manifold with an isolated (asymptotically) conical singularity. This
  is the best understood case of a singular manifold; it is impossible
  here to do justice to all the scientists who contributed. So we just
  reiterate that its study was initiated by Cheeger \cite{Che:SGS, Che:SGSR}.

  \paragraph*{\itshape{3. Simple edge singularities}} In the hierarchy of singularities
of stratified spaces, which are in general of iterated cone type, this
is the next simple class after isolated conical ones: simplifying a little
$U$ is of the form $(0,\eps)\times F\times B$ with metric
$dx^2+x^2 g_F(x)+g_B(x)$. The heat trace expansion and the existence of
the analytic torsion for this class of singularities has been established
recently by Mazzeo and Vertman \cite{MazVer:ATM}.

\paragraph*{\itshape{4. Complete cylindrical ends} }
This case is at the
heart of Melrose's celebrated b--calculus \cite{Mel:APS}.
An exact b-metric on $(0,\eps)\times Y$ is of the form
$dx^2/x^2+g_Y$. Making the change of variables $x=e^{-y}$
we obtain a metric cylinder $(-\log\eps,\infty)\times Y$
with metric $dy^2+g_Y$. 
$M$ is then a complete manifold. Therefore, the Laplacian, \eg
is essentially self--adjoint. However, it is not a discrete
operator and hence its heat operator is not of trace class.

\paragraph*{\itshape{5. Cusps}} Cusps occur naturally as singularities
of Riemann surfaces of constant negative curvature. A cusp
is given by $U=(0,\infty)\times Y$ with metric $dx^2+ e^{-2x} g_Y$. Then
$M$ has finite volume. As in the previous case, however, 
the Laplacian is not a discrete operator. 
In this situation (and also in the previous one) 
one employs methods from scattering theory.
There has been seminal work on this by Werner M\"uller \cite{Mul:SGS}.
\smallskip

The results of this paper apply to situations where the operator $P$
is discrete (has compact resolvent). This is the case
in the examples 1.-3. above, but \emph{not} in 4. and 5. Nevertheless
we are confident that our method can be extended to relative heat traces
and relative determinants, \eg for surfaces of finite area. 

\medskip
To explain our results 
without becoming too technical suppose that for $P_U=P\restriction U$ and $P_1=P\restriction M_1$ 
(of course suitable extensions have to be chosen for $P_U$ and $P_1$) we have proved expansions
Eq.\ \eqref{eq:AsympGlobal}. Then in terms of a suitable cut-off function
$\varphi$ which is $1$ in a neighborhood of $M_1$ one expects to hold:

\begin{princ}[Duhamel's principle for heat asymptotics; informal
version]\label{thm:Duhamel}
If $P_U$ and $P_1$ are discrete with trace-class heat kernels then so is $P$
and
\begin{equation}\label{eq:Duhamel}
 \Tr\bigl(e^{-tP}\bigr) =\Tr\bigl( \varphi e^{-t
 P_1}\bigr)+\Tr\bigl((1-\varphi)e^{-tP_U}\bigr)+O(t^N),\quad\text{as } t\to 0+
\end{equation}
for all $N$.
\end{princ}
We reiterate that the heat operator is a \emph{global operator}. On
a \emph{closed} manifold its short time asymptotic expansion is local
in the sense that the heat trace coefficients are integrals over local
densities as described above. This kind of local behavior cannot be
expected on non-compact manifolds. However, Principle \ref{thm:Duhamel}
shows that the heat trace coefficients localize near the singularity; they
may still be global \emph{on} the singularity as it is the case, \eg
for Atiyah-Patodi-Singer boundary conditions \cite{APS:SARI}.
 
Principle \ref{thm:Duhamel} is a folklore theorem which appears in
various versions in the literature. In Section \ref{s:EOG} below we will
prove a fairly general rigorous version of it (Cor. \ref{cor:EOG6}).

Once the asymptotic expansion Eq.\ \eqref{eq:AsympGlobal} is in place one
obtains, via the Mellin transform, the meromorphic continuation of the
$\zeta$--function
\begin{equation}
       \zeta(P;s):= \sum_{\gl\in\spec(P)\setminus\{0\}} \gl^{-s}
                = \frac{1}{\Gamma(s)}\int_0^\infty t^{s-1} \Tr\bigl( (I-\Pi_{\ker P}) e^{-t P}
                                                \bigr) dt.
\end{equation}

Let us specialize to the de Rham complex. So suppose that we have chosen an 
ideal boundary condition
(essentially this means that we have chosen closed extensions for the exterior
derivative) $(\sD,D)$ for the de Rham complex such that the corresponding
extensions $\Delta_j=D_j^*D_j+D_{j-1}D_{j-1}^*$ of the Laplace operators
satisfy Eq.\ \eqref{eq:AsympGlobal}. Then we can form the analytic torsion
of $(\sD,D)$
\begin{equation}
 \log T(\sD,D):=\frac 12 \sum_{j\ge 0} (-1)^j j\frac{d}{ds}\big|_{s=0} 
\zeta(\Delta_j;s).
\end{equation}

For a closed manifold the celebrated Cheeger--M\"uller Theorem
(\cite{Che:ATH},\cite{Mul:ATT}) relates the analytic torsion to the
combinatorial torsion (Reidemeister torsion).

In terms of the decomposition Eq.\ \eqref{eq:100a} the problem
of proving a CM type Theorem for the singular manifold $M$
decomposes into the following steps.

\begin{enumerate}
\item Prove that the analytic torsion exists for the model manifold $U$.
\item Compare the analytic torsion with a suitable combinatorial
 torsion for $U$.
\item Prove a gluing formula for the analytic and combinatorial torsion
 and apply the known Cheeger--M\"uller Theorem for the manifold with boundary
 $M_1$.
\end{enumerate}

A gluing formula for the combinatorial torsion is more or less an algebraic
fact due to Milnor; \cf also the Appendix \ref{s:HAG}. 
The following Theorem which follows from our gluing formula
solves (3) under a product structure assumption:

\begin{theorem}\label{t:I1}
Let $M$ be a singular manifold as \Eqref{eq:100a} and assume that
near $\pl M_1$ all structures are product. Then for 
establishing a Cheeger-M\"uller Theorem for $M$
it suffices to prove it for the model space $U$ of the singularity.
\end{theorem}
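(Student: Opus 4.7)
The plan is to combine three ingredients that are either already in the paper or in the established literature: (a) the main gluing formula for the analytic torsion proved in this paper, (b) the algebraic gluing formula for the Reidemeister torsion in Appendix~\ref{s:HAG} (essentially due to Milnor), and (c) the Cheeger--M\"uller theorem for compact manifolds with boundary, which is known in the generality of an arbitrary flat bundle $F$ with a not necessarily flat Hermitian metric thanks to the work of L\"uck and Br\"uning--Ma.

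Define, for any reasonable pair $(X,\text{bc})$ with an ideal boundary condition, the \emph{Cheeger--M\"uller defect}
\[
 \mathcal{D}(X,\text{bc}) \;:=\; \log T_{\mathrm{an}}(X,\text{bc}) \;-\; \log \tau_{\mathrm{comb}}(X,\text{bc}),
\]
understood relative to a chosen basis of cohomology (so that both quantities live on the determinant line of cohomology). The proof is then structured as follows. First, I apply the main gluing theorem of the paper to the decomposition $M=U\cup_{Y} M_1$ with $Y=\pl M_1$, using the product structure near $Y$; this expresses $\log T_{\mathrm{an}}(M)$ as the sum $\log T_{\mathrm{an}}(U,\text{bc}_U) + \log T_{\mathrm{an}}(M_1,\text{bc}_{M_1})$ plus an explicit transition term living on $Y$. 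Here $\text{bc}_U$ and $\text{bc}_{M_1}$ are complementary (absolute/relative) boundary conditions at $Y$ as prescribed by the main theorem. Next, I apply the algebraic gluing formula of Appendix~\ref{s:HAG} to the Mayer--Vietoris sequence of the pair $(U,M_1)$ glued along $Y$; this gives the analogous decomposition on the combinatorial side, with a boundary transition term which, by its algebraic nature and the product structure hypothesis near $Y$, matches the analytic one.

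Subtracting the two gluing formulas, the $Y$--terms cancel (this is where the product structure near $\pl M_1$ is essential) and one obtains the additivity
\[
 \mathcal{D}(M) \;=\; \mathcal{D}(U,\text{bc}_U) \;+\; \mathcal{D}(M_1,\text{bc}_{M_1}).
\]
Since $M_1$ is a compact manifold with boundary equipped with product metric and product flat bundle near $Y$, the second summand vanishes by the Cheeger--M\"uller theorem with boundary in the form of Br\"uning--Ma (the general, non-unimodular version, since the paper allows an arbitrary Hermitian metric on $F$). Hence $\mathcal{D}(M)=0$ if and only if $\mathcal{D}(U,\text{bc}_U)=0$, which is exactly the content of the theorem.

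The main obstacle is the precise matching of the boundary/transition terms produced by the analytic and combinatorial gluing formulas. On the analytic side these terms involve $\zeta$--regularized quantities on $Y$ (the analytic torsion of $Y$, together with contributions coming from the harmonic forms on $Y$ used to identify the determinant lines), and on the combinatorial side they come from the Mayer--Vietoris long exact sequence and a choice of basis of $H^{\bullet}(Y;F|_Y)$. Under the product structure assumption, the determinant line identifications and the $Y$--contributions on both sides are governed by the same cohomological data on the closed manifold $Y$, so the Br\"uning--Ma boundary-with-product-metric version of Cheeger--M\"uller for $Y\times[0,\eps)$ forces them to agree; verifying this compatibility is where the bulk of the technical work of the paper is used.
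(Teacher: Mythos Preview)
Your overall strategy is exactly what the paper intends: Theorem~\ref{t:I1} is not given a separate proof in the body of the paper, but is presented as a direct consequence of the analytic gluing formula (Theorem~\ref{thm:Theorem1}), the algebraic gluing formula (Theorem~\ref{t:HA1}), and the known Cheeger--M\"uller theorem for the compact manifold with boundary $M_1$. Your additivity-of-the-defect argument is the correct way to package this.

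One point is overcomplicated, however. You describe the matching of the $Y$--correction terms as ``the main obstacle'' and suggest that one needs to invoke Br\"uning--Ma for $Y\times[0,\eps)$ to force agreement. In fact the paper is engineered so that this matching is automatic: compare \Eqref{eq:Theorem1Eq1} with \Eqref{eq:HA13} at $\theta=\pi/4$. Both gluing formulas carry \emph{the same} correction, namely the torsion of the Mayer--Vietoris long exact cohomology sequence together with the constant $-\tfrac12\log 2\cdot\chi(Y;F)$ (the paper notes explicitly after Theorem~\ref{t:HA1} that $\log\cos(\pi/4)=-\tfrac12\log 2$). Hence when you subtract the combinatorial gluing identity from the analytic one, these terms cancel on the nose; no auxiliary Cheeger--M\"uller result on a collar is needed. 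The product-structure hypothesis is used to make Theorem~\ref{thm:Theorem1} applicable in the first place, not to effect the cancellation. The ``bulk of the technical work'' of the paper is the proof of Theorem~\ref{thm:Theorem1} itself (Sections~\ref{s:VMB}--\ref{s:GTP}), not a separate verification of boundary compatibility.
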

The Theorem basically says that, under product assumptions, one gets step (3) for free. 
Otherwise the specific form of $U$ is completely irrelevant. We conjecture
that the product assumption in Theorem \ref{t:I1} can be dispensed with. 
This would follow once the anomaly formula of Br\"uning-Ma \cite{BruMa:AFR}
were established for the model $U$ of the singularity; this would allow to
compare the analytic torsion for $(U,g)$ to the torsion
of $(U,g_1)$, where $g_1$ is product near $\pl M_1$ and outside a relatively
compact collar coincides with $g$.

The Theorem is less obvious than it sounds since torsion invariants are global in nature. 
However, we will show here that under minimal technical assumptions
the analytic torsion satisfies a gluing formula. That the combinatorial
torsion satisfies a gluing formula is a purely algebraic fact (\textit{cf.}
Appendix \ref{s:HAG}).
The blueprint for our proof is a technique of moving boundary conditions due to Vishik 
\cite{Vis:GRS} who applied it to prove the Cheeger-M\"uller 
Theorem for compact manifolds with smooth boundary.
Br\"uning and the author \cite{BruLes:EIC} applied Vishik's moving boundary
conditions to generalized Atiyah-Patodi-Singer nonlocal boundary conditions
and to give an alternative proof of the gluing formula for the eta-invariant.
We emphasize, however, that the technical part of the present paper
is completely independent of (and in our slightly biased view simpler than)
\cite{Vis:GRS}. Also we work with the de Rham complex coupled to an
arbitrary flat bundle $F$. Besides the product structure assumption we do
not impose any restrictions on the metric $h^F$ on $F$; in particular $h^F$
is not assumed to be flat.

We note here that in the context of \emph{closed} manifolds gluing formulas
for the analytic torsion have been proved in \cite{Vis:GRS},
\cite{BurFriKap:TMB}, and recently \cite{BruMa:GFA}.
In contrast our method applies to a wide class of singular manifolds.

Some more comments on conic singularities, the most basic singularities,
are in order: let $(N,g)$ be a compact closed Riemannian manifold and let 
$CN=(0,1)\times N$ with metric $dx^2+x^2 g$ be the cone over $N$. We emphasize
that sadly near $\pl CN=\{1\}\times N$ we do not have product
structure. Let $g_1$ be a metric on $CN$ which is product near $\{1\}\times N$
and which coincides with $g$ near the cone tip.

Vertman \cite{Ver:ATB} gave formulas for the torsion of the cone $(CN,g)$ in
terms of spectral data of the cone base. What is still not yet understood is
how these formulas for the analytic torsion can be related to a combinatorial
torsion of the cone, at least not in the interesting odd dimensional case.
For $CN$ even dimensional Hartmann and Spreafico \cite{HarSpr:ECM} 
express the torsion of $(CN,g)$ in terms of the intersection torsion
introduced by A. Dar \cite{Dar:IRT} and the anomaly term of Br\"uning-Ma
\cite{BruMa:AFR}. If it were also possible to apply loc. cit. to the singular
manifold $CN$ to compare the torsion of the metric cone $(CN, g)$ to that of
the cone $(CN,g_1)$ where the metric near $\{1\}\times N$ is modified to a product
metric then one would obtain a (very sophisticated) new proof of Dar's
Theorem that for an even dimensional manifold with conical singularities
the analytic and the intersection torsion both vanish\footnote{
 For this to hold one needs to assume that the metric on the twisting bundle $F$ 
 is also flat.}.
It would be more interesting, of course, to have this program worked out
in the odd dimensional case.

\medskip
The paper is organized as follows. Section \ref{s:OMZ} serves to
introduce some terminology and notation. In a purely functional
analytic context we discuss selfadjoint operators with \emph{discrete
dimension spectrum}; this terminology is borrowed from Connes and Moscovici's
celebrated paper on the Local Index Theorem in Noncommutative Geometry \cite{ConMos:LIF}. 
For \emph{Hilbert complexes} \cite{BruLes:HC}
whose Laplacians have discrete dimension spectrum one can introduce
the analytic torsion. We state a formula for the torsion of
a product complex (Prop. \ref{p:1007116}) and in Subsection
\plref{ss:TFD} we collect some algebraic facts about determinants
and the torsion of a finite--dimensional Hilbert complex. The
main result of the Section is Prop. \ref{p:GenDiffResult}
which, under appropriate assumptions, provides a variation
formula for the analytic torsion of a one-parameter family of Hilbert
complexes.

In Section \plref{s:EOG} we discuss the gluing of operators in a fairly
general setting: we assume that we have two pairs $(M_j,P_j^0), j=1,2$
consisting of Riemannian manifolds $M_j^m$ and elliptic operators $P_j^0$
such that each $M_j$ is the interior of a manifold
$\ovl{M_j}$ with compact boundary $Y$ ($\ovl{M_j}$
is not necessarily compact). Let $W=Y\times (-c,c)$ be a common collar
of $Y$ in $M_1$ resp. $M_2$ such that $\pl M_1=Y\times \{1\}$ and 
$\pl M_2=Y\times \{-1\}$ and such that $P_1^0$ coincides with $P_2^0$ 
over $W$.  Then $P_j^0$ give rise naturally to a differential operator
$P^0=P_1^0\cup P_2^0$ on $M:=\bigl(M_1\setminus (Y\times (0,c))\bigr)
\cup_{Y\times \{0\}} \bigl(M_2\setminus (Y\times (-c,0))\bigr)$.
Without becoming too technical here we will show 
in Prop. \ref{p:EOG5} that certain semibounded symmetric extensions $P_j, j=1,2$ of 
$P_j^0$ satisfying a non-interaction condition \eqref{eq:EOG7}
give rise naturally to a semibounded selfadjoint extension of $P^0$.
Furthermore, if $P_j$ have discrete dimension spectrum outside $W$
(\textit{cf.~}the paragraph before Cor. \ref{cor:EOG6})
then the operator $P$ has discrete dimension spectrum and up to an
error of order $O(t^\infty)$ the short time heat trace expansion of
$P$ can be calculated easily from the corresponding expansions of $P_j$.

As an additional feature we prove similar results for perturbed operators
of the form $P_j+V_j$ where $V_j$ is a certain non-pseudodifferential
operator; such operators will occur naturally in our main technical Section
\plref{s:GTP}. 

In Section \ref{s:VMB} we describe the details of the gluing
situation, review Vishik's moving boundary conditions for the de
Rham complex in this context, and introduce various one-parameter families
of de Rham complexes. The main technical result of the paper
is Theorem \ref{thm:Theorem2} which analyzes the variation of the
torsions of these various families of de Rham complexes. 
The proof of Theorem \ref{thm:Theorem2} occupies the
whole Section \ref{s:GTP}. The proof is completely independent
of Vishik's original approach. The main feature of our proof
is a gauge transformation \`a la Witten of the de Rham complex which
transforms the de Rham operator, originally a family of operators with
varying domains, onto a family of operators with constant domain;
this family can then easily be differentiated by the parameter.

Theorem \ref{thm:Theorem1} in Section \ref{s:GF} then finally is
the main result of the paper whose proof, thanks to Theorem \ref{thm:Theorem2}
is now more or less an exercise in diagram chasing. 

Appendix \ref{s:HAG} contains the analogues of our main results
for finite-dimensional Hilbert complexes.

\medskip
The paper has a somewhat lengthy history. The material of Sections
\ref{s:VMB} and \ref{s:GTP}, however only in the context of smooth
manifolds, was developed in summer 1999 while being on a Heisenberg
fellowship in Bonn. In light of the (negative) feedback received at
conferences I felt that the subject was dying and therefore abandoned it.

In recent years there has been a revived interest in generalizing
the Cheeger-M\"uller Theorem to manifolds with singularities 
(\cite{MazVer:ATM}, \cite{Ver:ATB}, \cite{MulVer:MAA}, \cite{HarSpr:ECM}). 
I noticed that my techniques (an adaption of Vishik's
work \cite{Vis:GRS} plus simple observations based on Duhamel's principle)
do not require the manifold to be closed. The bare minimal assumptions
required for the analytic torsion to exist (``discrete dimension spectrum''
see Section \ref{s:OMZ})
and a mild but obvious non-interaction restriction on the choice of
the ideal boundary conditions (Def. \ref{def:EOG4})
for the de Rham complex actually suffice to prove a gluing formula
for the analytic torsion. Since a more concise and more accessible account
of Vishik's important long paper \cite{Vis:GRS} is overdue anyway I therefore
eventually, also because Werner M\"uller and Boris Vertman have been pushing
me for quite a while, to make a final effort to write up this paper.


\section*{Acknowledgments} 

I would like to thank Werner M\"uller and Boris Vertman for pushing
and encouraging me to complete this project. I also owe a lot
of gratitude to my family. I dedicate this paper to my dearly beloved
late aunt Annels Roth (1923 -- 2012).

\section{Operators with meromorphic $\zeta$--function}\label{s:OMZ} 

Let $\sH$ be a separable complex Hilbert space, $T$ a non-negative selfadjoint operator in $\sH$ with
$p$--summable resolvent for some $1\le p<\infty$. The summability condition
implies that $T$ is a \emph{discrete} operator, \ie
the spectrum of $T$ consists of eigenvalues of finite
multiplicity with $+\infty$ being the only accumulation point. 
Moreover, 
\begin{align}\label{eq:1209071}  
    \Tr\bigl(e^{-t T}\bigr) &= \sum_{\gl\in\spec T} e^{-t\gl}= \dim\ker T 
       + O(e^{-t\gl_1}),\quad\text{as } t\to \infty,
\intertext{and}    
    \Tr\bigl(e^{-t T}\bigr) &= O(t^{-p}),\quad \text{as } t\to 0+.
\end{align}
Here $\gl_1:=\min\bigl(\spec T\setminus \{0\}\bigr)$ denotes the smallest
non-zero eigenvalue of $T$.

As a consequence, the $\zeta$--function
\begin{equation}\label{eq:ZetaFunction}
       \zeta(T;s):= \sum_{\gl\in\spec(T)\setminus\{0\}} \gl^{-s}
                 = \frac{1}{\Gamma(s)}\int_0^\infty t^{s-1} \Tr\bigl( (I-P_{\ker T}) e^{-t T}
                                                \bigr) dt,
\end{equation}
is a holomorphic function in the half plane $\Re s> p$; $P_{\ker T}$ denotes the orthogonal projection onto $\ker T$.

\begin{dfn}\label{def:FDS} Following \cite{ConMos:LIF} 
we say that $T$ has \emph{discrete dimension spectrum} if $\zeta(T;s)$ 
extends meromorphically to the complex plane $\C$ such that on finite vertical strips 
$|\Gamma(s) \zeta(T;s)|=O(|s|^{-N})$, $|\Im s|\to \infty$, for each $N$.
Denote by $\Sigma(T)$ the set of poles of the function $\Gamma(s)\zeta(T;s)$.
\end{dfn}

It then follows that for fixed real numbers $a<b$ there are only finitely many poles in the
strip $a<\Re s<b$. Moreover, as explained e.g. in \cite[Sec.~2]{BruLes:EIC}, the discrete dimension
spectrum condition is equivalent to the existence of an asymptotic expansion
\begin{equation}\label{eq:HeatExpansion}
    \Tr\bigl(e^{-t T}\bigr)\sim_{t\to 0+} \sum_{\substack{\ga \in -\Sigma,\\
    0\le k\le k(\ga)} }
               a_{\ga k}\; t^\ga \log^k t.
\end{equation}
Furthermore, there is the following simple relation between the coefficients of the asymptotic expansion
and the principal parts of the Laurent expansion at the poles of $\Gamma(s)\zeta(T;s)$:
\begin{equation}\label{eq:PolesRelExpansion}
 \Gamma(s)\zeta(T;s)\sim \sum_{\substack{\ga \in -\Sigma,\\ 0\le k\le k(\ga)}} \frac{a_{\ga k}(-1)^k k!}{(s+\ga)^{k+1}}
                   -\frac{\dim\ker T}{s}.
\end{equation}
\subsection{Hilbert complexes and the analytic torsion} \label{ss:HCAT}
We use the convenient language of Hilbert complexes as outlined in \cite{BruLes:HC}. 
Recall that a Hilbert complex
$(\sD,D)$ consists of a sequence of Hilbert spaces $H_j,0\le j\le N$, together with
closed operators $D_j$ mapping a dense linear subspace $\sD_j\subset H_j$ into $H_{j+1}$.
The complex property means that actually $\ran D_j\subset \sD_{j+1}$ and 
$D_{j+1}\circ D_j=0$. We say that a Hilbert complex has discrete dimension spectrum
if all its Laplace operators $\Delta_j=D_j^*D_j+D_{j-1}D_{j-1}^*$ do have
discrete dimension
spectrum in the sense of Def. \ref{def:FDS}.
Note that since $\Delta_j$ has compact resolvent, $(\sD,D)$ is automatically
a Fredholm complex, cf. \cite[Thm. 2.4]{BruLes:HC}. 
For a Hilbert complex $(\sD,D)$ which is Fredholm the finite-dimensional 
cohomology group $H^j(\sD,D)= \ker D_j/\ran D_{j-1}$
is the quotient space of the Hilbert space $\ker D_j$ by the closed subspace $\ran D_{j-1}$
and therefore is naturally equipped with a Hilbert space structure. From the Hodge
decomposition \cite[Cor. 2.5]{BruLes:HC}
\begin{equation}\label{eq:HD}
\begin{split}
     H_j&= \ker D_j \cap \ker D_{j-1}^* \oplus \ran D_{j-1}\oplus \ran D_j^*\\
        &= \ker \Delta_j \oplus \ran D_{j-1}\oplus \ran D_j^*
        \end{split}
\end{equation}
one then sees that the natural isomorphism
$\hat H^j(\sD,D):=\ker \Delta_j=\ker D_j\cap \ker D_{j-1}^*\rightarrow H^j(\sD,D)$ is an isometric
isomorphism. We will always tacitly assume that the cohomology groups are
equipped with this natural Hilbert space structure.

Recall the \emph{Euler characteristic}
\begin{equation}\label{eq:EulerCharacteristic}
    \chi(\sD,D):=\sum_{j\ge 0} (-1)^j \dim H^j(\sD,D)=\sum_{j\ge 0} (-1)^j
    \dim\ker \Delta_j.
\end{equation}
The discrete dimension spectrum assumption implies the validity of the
\emph{McKean--Singer formula}
\begin{equation}\label{eq:McKS}
 \chi(\sD,D)=\sum_{j\ge 0} (-1)^j\, \Tr\bigl(e^{-t\Delta_j}\bigr), \quad \text{for } t>0.
\end{equation}

\begin{dfn}\label{def:AnalyticTorsion} Let $(\sD,D)$ be a Hilbert complex with discrete dimension
spectrum. The \emph{analytic torsion} of $(\sD,D)$ is defined by
\[ \log T(\sD,D):=\frac 12 \sum_{j\ge 0} (-1)^j j\frac{d}{ds}\big|_{s=0} 
\zeta(\Delta_j;s).\]
If $\zeta(\Delta_j;s)$ has a pole at $s=0$ then by $\frac{d}{ds}\big|_{s=0} \zeta(\Delta_j;s)$
we understand the coefficient of $s$ in the Laurent expansion at $0$.

Obviously $\log T(\sD,D)$ can be defined under the weaker assumption that the
function 
\begin{equation}\label{eq:TorsionFunction}
          F(\sD,D;s):=\frac 12 \sum_{j\ge 0} (-1)^j j \zeta(\Delta_j;s)
\end{equation}
extends meromorphically to $\C$.
\end{dfn}
The analytic torsion can also be expressed in terms of the \emph{closed}
resp. \emph{coclosed} Laplacians: put
\begin{align}
 \Delcl{j}&:= \Delta_j\restriction \ran D_{j-1} = D_{j-1} D_{j-1}^*
 \restriction \ran D_{j-1}, \label{eq:cl}\\
 \Delccl{j}&:= \Delta_j\restriction \ran D_{j}^* = D_{j}^* D_{j}
 \restriction \ran D_{j}^*. \label{eq:ccl}
\end{align}
Note that by definition $\Delccl{0}=0$ and $\Delcl{N}=0$  
act on the trivial Hilbert space $\{0\}$; recall that $N$ is the length of
the Hilbert complex.
By the Hodge decomposition  \eqref{eq:HD} the operators $\Delcl{j}$ and
$\Delccl{j}$ are invertible. Moreover,
\begin{equation}\label{eq:clccl}
 \Delcl{j+1}D_j\restriction\ran D_j^* = D_j \Delccl{j}.
\end{equation}
Hence the eigenvalues
of $\Delccl{j}$ and $\Delcl{j+1}$ coincide including multiplicities.
Putting for the moment $A_j:=\Tr\bigl(e^{-t\Delcl{j}}\bigr)=
\Tr\bigl(e^{-t\Delccl{j-1}}\bigr)$ for $j\ge 1$ and $A_0:=0$
we therefore have
\begin{equation}\label{eq:1209102}  
    \Tr\bigl(e^{-t\Delta_j}\bigr)-\dim H^j(\sD,D) = 
    \Tr\bigl(e^{-t\Delcl{j}}\bigr) + \Tr\bigl(e^{-t\Delccl{j}}\bigr)
    =A_j+A_{j+1},
\end{equation}
and hence
\begin{align}
  \sum_{j\ge 0}& (-1)^j\, j \, \Bigl(\Tr\bigl(e^{-t\Delta_j}\bigr)-\dim
  H^j(\sD,D)\Bigr)\nonumber \\
  &= \sum_{j\ge 0} (-1)^j\, j \, (A_j+A_{j+1}) = \sum_{j\ge 0} (-1)^j\, j\, A_j
  -\sum_{j\ge 0} (-1)^j\, (j-1)\, A_j\nonumber\\
  &  = \sum_{j\ge 0} (-1)^j \Tr\bigl(e^{-t\Delcl{j}}\bigr)= - \sum_{j\ge 0} (-1)^j
    \Tr\bigl(e^{-t\Delccl{j}}\bigr).\label{eq:HTccl}
\end{align} 
To avoid cumbersome distinction of cases we understand that
$\Tr\bigl(e^{-t\Delccl{0}}\bigr)=0$.

\begin{prop}\label{p:1007116}
Let $(\sD',D'),(\sD'',D'')$ be two Hilbert complexes with discrete dimension spectrum.
Let $(\sD,D):=(\sD',D')\hat\otimes(\sD'',D'')$ be their tensor product. Denote
by $\Delta', \Delta'', \Delta$ the Laplacians of $(\sD',D'), (\sD'',D''),
(\sD,D)$, resp.

Then the function $F(\sD,D;s):=\frac 12 \sum_{j\ge 0} (-1)^j j \zeta(\Delta_j;s)$
extends meromorphically to $\C$. More precisely, in terms of the corresponding function
for the complexes $(\sD',D'), (\sD'',D'')$ we have the equations
\begin{align}
 \chi(\sD,D)&= \chi(\sD',D')\cds \chi(\sD'',D''),\label{eq:OMZ6}\\
   F(\sD,D;s)&=\chi(\sD',D') \cds F(\sD'',D'';s)+\chi(\sD'',D'')\cds
   F(\sD',D';s),\label{eq:OMZ7}\\
\intertext{in particular}
   \log T(\sD,D)&=\chi(\sD',D')\cds \log T(\sD'',D'')+\chi(\sD'',D'')\cds \log
   T(\sD',D').\label{eq:OMZ8}
\end{align}
\end{prop}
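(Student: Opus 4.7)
Start from the structure of the tensor product: $H_n=\bigoplus_{i+j=n}H'_i\otimes H''_j$, and on each summand the differential is $D=D'_i\otimes I+(-1)^i I\otimes D''_j$. A direct computation, using the graded Leibniz rule for $D$ and taking adjoints, shows that the cross terms in $D^*D+DD^*$ cancel, so the Laplacian factors as
\[
\Delta_n\restriction H'_i\otimes H''_j = \Delta'_i\otimes I + I\otimes\Delta''_j.
\]
Consequently $e^{-t\Delta_n}\restriction H'_i\otimes H''_j = e^{-t\Delta'_i}\otimes e^{-t\Delta''_j}$, and taking traces gives the central heat-kernel factorization
\[
\Tr\bigl(e^{-t\Delta_n}\bigr)=\sum_{i+j=n}\Tr\bigl(e^{-t\Delta'_i}\bigr)\cdot\Tr\bigl(e^{-t\Delta''_j}\bigr).
\]
Letting $t\to\infty$ isolates the harmonic parts and, combined with the Hodge identification $H^n(\sD,D)=\ker\Delta_n$, yields the Künneth formula $\dim H^n(\sD,D)=\sum_{i+j=n}\dim H^i(\sD',D')\dim H^j(\sD'',D'')$; multiplying by $(-1)^n$ and summing gives (\ref{eq:OMZ6}).

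The algebraic heart of (\ref{eq:OMZ7}) is the identity
\[
\sum_n(-1)^n n\,\Tr\bigl(e^{-t\Delta_n}\bigr)=\chi(\sD'',D'')\sum_i(-1)^i i\,\Tr\bigl(e^{-t\Delta'_i}\bigr)+\chi(\sD',D')\sum_j(-1)^j j\,\Tr\bigl(e^{-t\Delta''_j}\bigr),
\]
which follows from the heat-kernel factorization by distributing the weight $(i+j)$ across the product and factoring each of the two resulting terms; the unweighted sums $\sum_i(-1)^i\Tr(e^{-t\Delta'_i})$ and $\sum_j(-1)^j\Tr(e^{-t\Delta''_j})$ collapse to $\chi(\sD',D')$ and $\chi(\sD'',D'')$ respectively via the McKean--Singer formula (\ref{eq:McKS}). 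The identical manipulation applied to $\sum_n(-1)^n n\,\dim H^n(\sD,D)$, via Künneth, shows that the kernel subtractions match up perfectly, so that with $\widetilde f_n(t):=\Tr(e^{-t\Delta_n})-\dim H^n(\sD,D)$ one has
\[
\sum_n(-1)^n n\,\widetilde f_n(t)=\chi(\sD'',D'')\sum_i(-1)^i i\,\widetilde f'_i(t)+\chi(\sD',D')\sum_j(-1)^j j\,\widetilde f''_j(t).
\]

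To conclude, multiply by $\tfrac{t^{s-1}}{2\Gamma(s)}$ and integrate over $(0,\infty)$. For $\Re s$ sufficiently large all integrals converge absolutely, thanks to the polynomial bound $\widetilde f_\bullet(t)=O(t^{-p})$ as $t\to 0+$ (from the discrete dimension spectrum hypothesis) and the exponential decay as $t\to\infty$. By definition the left-hand side is $F(\sD,D;s)$, while the right-hand side is $\chi(\sD'',D'')F(\sD',D';s)+\chi(\sD',D')F(\sD'',D'';s)$, which extends meromorphically to $\C$ by hypothesis. This simultaneously establishes the meromorphic extension of $F(\sD,D;s)$ and the identity (\ref{eq:OMZ7}); reading off the coefficient of $s$ in the Laurent expansion at $s=0$ then produces (\ref{eq:OMZ8}). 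No serious obstacle arises here: the argument is algebraic bookkeeping built on top of the heat-kernel factorization and a routine Mellin transform.
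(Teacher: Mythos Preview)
Your proof is correct and follows essentially the same route as the paper: both arguments rest on the Laplacian decomposition $\Delta_n\restriction H'_i\otimes H''_j=\Delta'_i\otimes I+I\otimes\Delta''_j$, the resulting heat-kernel factorization, the split $n=i+j$ combined with McKean--Singer to pull out the Euler characteristics, and a Mellin transform to pass to $F(\sD,D;s)$. You are somewhat more explicit than the paper about matching the kernel subtractions via K\"unneth, but this is just spelling out what the paper compresses into the final sentence invoking \eqref{eq:ZetaFunction} and \eqref{eq:McKS}.
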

\begin{proof} This is an elementary calculation, \cf \cite[Prop. 2.1]{Vis:GRS}
 and \cite[Thm.~2.5]{RaySin:RTL}.
Since
\[\Delta_k=\bigoplus_{i+j=k} \Delta_i'\otimes I+ I\otimes \Delta_j'',\]
we have
\begin{equation}
 \ker(\Delta_k-\gl)=\bigoplus_{\gl'+\gl''=\gl}\bigoplus_{i+j=k}
   \ker(\Delta_i'-\gl')\otimes\ker(\Delta_j''-\gl'').
\end{equation}
This proves \Eqref{eq:OMZ6}, which follows also from the K\"unneth--Theorem
for Hilbert complexes \cite[Cor.~2.15]{BruLes:HC}.
Furthermore,
\begin{equation}\begin{split}
  \sum_{k\ge 0}& (-1)^k k\Tr\bigl( e^{-t\Delta_k}\bigr)
  =\sum_{k\ge 0} (-1)^k k\sum_{i+j=k}\sum_{\substack{
                      \gl'\in\spec\Delta_i' \\ 
                      \gl''\in\spec \Delta_j''}}
                      e^{-t\gl'}\, e^{-t\gl''}\\
 =&\sum_{i,j\ge 0} (-1)^{i+j}(i+j)   \sum_{\substack{
                    \gl'\in\spec\Delta_i'  \\ 
                    \gl''\in\spec \Delta_j''}}
                    e^{-t\gl'}e^{-t\gl''}\\
 =&\bigl(\sum_{i\ge 0} (-1)^i \Tr\bigl(e^{-t\Delta_i'}\bigr)\bigr)\cds \bigl(\sum_{j\ge 0}
(-1)^j j\Tr\bigl(e^{-t\Delta_j''}\bigr)\bigr)\\
  &+\bigl(\sum_{j\ge 0} (-1)^j \Tr\bigl(e^{-t\Delta_j''}\bigr)\bigr)\cds \bigl(\sum_{i\ge 0}
(-1)^i i\Tr\bigl(e^{-t\Delta_i'}\bigr)\bigr).
		\end{split}
\end{equation}
The claim now follows from \Eqref{eq:ZetaFunction} and
the McKean--Singer formula \Eqref{eq:McKS} applied to $\Delta_i',\Delta_j''$.
\end{proof}

Next we state an abstract differentiability result, \cf
\cite[Appendix]{DaiFre:EID}, \cite[Appendix D]{Boh:RIF}:

\begin{prop}\label{p:GenDiffResult}
Let $(\sD^\TT,D^\TT), \TT\in J\subset\R$,
be a one parameter family of Hilbert complexes with
discrete dimension spectrum; let
$\Delta_j^\TT=(D_j^\TT)^*D_j^\TT+D_{j-1}^\TT(D_{j-1}^\TT)^*$ be
the corresponding Laplacians. Assume that
\begin{enumerate}
\item $H_T(\sD^\TT,D^\TT)(t)=\altsum j \Tr\bigl(e^{-t\Delta_j^\TT}\bigr)$
is differentiable in $(t,\TT)\in (0,\infty)\times J$ and 
\begin{equation}\label{eq:106}  
    \frac{d}{d\TT} H_T(\sD^\TT,D^\TT)(t)= t\frac{d}{dt} \Tr\bigl(P e^{-t\Delta^\TT}\bigr)
\end{equation}
with some operator $P$ in $H=\oplus_{j\ge 0} H_j$ with
$P(I+\Delta^\TT)^{-N}$ bounded for some $N$.

\item  $\Delta^\TT$ is a graph smooth family of selfadjoint operators
with constant domain and $\dim \ker \Delta^\TT$ independent of $\TT$.

\item There is an asymptotic expansion
\begin{equation}\label{eq:107}  
    \Tr\bigl(P e^{-t \Delta^\TT}\bigr)\sim_{t\to 0+} \sum_{\ga \in -\Sigma, 0\le k\le k(\ga) }
               a_{\ga k}^\TT\; t^\ga \log^k t
\end{equation}
which is locally uniformly in $\TT$ and 
with $a_{\ga k}^\TT$ depending smoothly on $\TT$.
\item $a_{0k}^\TT=0$ for $k>0$, that is in the asymptotic expansion
\eqref{eq:107} there are no terms of the form $t^0 \log^k t$ for $k>0$.
\end{enumerate}
Then $\TT\mapsto \log T(\sD^\TT,D^\TT)$ is differentiable and
\begin{equation}\label{eq:108}  
 \begin{split}
    \frac{d}{d\TT} &\log T(\sD^\TT,D^\TT)\\
       & = -\frac 12 \LIM_{t\to 0+} \Tr\bigl(P e^{-t \Delta^\TT}\bigr) 
           +\frac 12 \LIM_{t\to\infty} \Tr\bigl(P e^{-t \Delta^\TT}\bigr) \\
       & = -\frac 12 a_{00}^\TT +\frac 12 \Tr\bigl( P\restriction \ker
       \Delta^\TT\bigr).
      \end{split}      
\end{equation}
\end{prop}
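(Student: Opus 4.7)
The plan is to express $\log T(\sD^\TT,D^\TT)$ via the Mellin transform, differentiate under the integral sign in $\TT$, and read off a Laurent coefficient at $s=0$. Define
\[
K(t,\TT):=H_T(\sD^\TT,D^\TT)(t)-\chi'(\sD^\TT,D^\TT),\qquad \chi'(\sD^\TT,D^\TT):=\sum_{j\ge 0}(-1)^j j\dim\ker\Delta_j^\TT,
\]
which is $\TT$-independent by assumption (2). Then
\[
2F(\sD^\TT,D^\TT;s)=\frac{1}{\Gamma(s)}\int_0^\infty t^{s-1}K(t,\TT)\,dt,
\]
and $2\log T(\sD^\TT,D^\TT)$ is the coefficient of $s^1$ in the Laurent expansion of $2F(\sD^\TT,D^\TT;s)$ at $s=0$.

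Next I would differentiate in $\TT$ under the integral. Hypothesis (1) gives $\partial_\TT K(t,\TT)=t\,\partial_t\Tr\bigl(Pe^{-t\Delta^\TT}\bigr)$. Writing $\Phi(t,\TT):=\Tr\bigl(Pe^{-t\Delta^\TT}\bigr)-\Tr\bigl(P|_{\ker\Delta^\TT}\bigr)$, which decays exponentially as $t\to\infty$ since the nonzero spectrum of $\Delta^\TT$ is bounded away from $0$, an integration by parts (valid for $\Re s$ large, where both boundary terms vanish) yields
\[
\partial_\TT\bigl[2F(\sD^\TT,D^\TT;s)\bigr]=-\frac{s}{\Gamma(s)}\int_0^\infty t^{s-1}\Phi(t,\TT)\,dt=:-\frac{s}{\Gamma(s)}I(\TT,s).
\]
The interchange of $\partial_\TT$ with the integral, and subsequently with meromorphic continuation in $s$, is justified by assumptions (2)--(3) combined with the $(I+\Delta^\TT)^{-N}$-boundedness of $P$; these provide uniform-in-$\TT$ control both on compact $t$-intervals and near the endpoints $t=0,\infty$.

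Finally, by hypothesis (3), $I(\TT,s)$ extends meromorphically to $\C$, and by hypothesis (4) it has only a simple pole at $s=0$ with residue $a_{00}^\TT-\Tr(P|_{\ker\Delta^\TT})$---the vanishing of all $t^0\log^k t$ coefficients for $k\ge 1$ is essential to exclude higher-order poles. Combined with $s/\Gamma(s)=s^2+O(s^3)$ near $s=0$, this yields
\[
\partial_\TT\bigl[2F(\sD^\TT,D^\TT;s)\bigr]=-s\bigl(a_{00}^\TT-\Tr(P|_{\ker\Delta^\TT})\bigr)+O(s^2),
\]
whose coefficient of $s$ gives the derivative of $2\log T(\sD^\TT,D^\TT)$; recognising $a_{00}^\TT=\LIM_{t\to 0+}\Tr(Pe^{-t\Delta^\TT})$ and $\Tr(P|_{\ker\Delta^\TT})=\LIM_{t\to\infty}\Tr(Pe^{-t\Delta^\TT})$ produces the stated formula. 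The main obstacle is the joint smoothness in $(\TT,s)$ required to commute $\partial_\TT$ with the Laurent-coefficient extraction at $s=0$; this is precisely what the combination of graph smoothness (2), locally uniform asymptotics (3), and the operator bound on $P$ is designed to supply.
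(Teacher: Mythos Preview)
Your proof is correct and follows essentially the same route as the paper's: express the torsion via the Mellin transform, use assumption (1) to convert $\partial_\TT$ of the heat supertrace into $t\,\partial_t\Tr(Pe^{-t\Delta^\TT})$, integrate by parts to produce the factor $-s/\Gamma(s)$, and then read off the coefficient of $s$ using assumptions (3) and (4). The only cosmetic difference is that you subtract the kernel contribution $\Tr(P|_{\ker\Delta^\TT})$ before integrating by parts (so both boundary terms manifestly vanish for $\Re s$ large), whereas the paper carries $\Tr(Pe^{-t\Delta^\TT})$ through and records the kernel term as the separate pole $-\Tr(P|_{\ker\Delta^\TT})/s$ in the Laurent expansion; these are equivalent bookkeeping choices.
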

Here $\LIM\limits_{t\to a}$ stands, as usual, for the constant term in the
asymptotic expansion as $t\to a$. In (1) we have used the abbreviation $\Delta^\TT:=\bigoplus_{j\ge 0} \Delta_j^\TT$.
\begin{proof}
(2) and (3) guarantee that in the following we may interchange differentiation
by $s$ and by $\TT$:
\begin{equation}\label{eq:109}   
  \begin{split}
  2 \frac{d}{d\TT}  \log T&(\sD^\TT,D^\TT) \\
      &=\frac{d}{d\TT} \frac{d}{ds}_{\big| s=0} \frac{1}{\Gamma(s)}
           \int_0^\infty t^{s-1} \altsum j \Tr\bigl(e^{-t\Delta_j^\TT}-P_{\ker \Delta_j^\TT}\bigr) dt\\
   &=\frac{d}{ds}_{\big| s=0} \frac{1}{\Gamma(s)}
           \int_0^\infty t^{s} \frac{d}{dt} \Tr\bigl(P e^{-t\Delta^\TT}\bigr) dt\\
   &=-\frac{d}{ds}_{\big| s=0} \frac{s}{\Gamma(s)}
           \int_0^\infty t^{s-1} \Tr\bigl(P e^{-t\Delta^\TT}\bigr) dt\\
   &= -\frac{d}{ds}_{\big| s=0} \frac{s}{\Gamma(s)}\Bigl[\bigl(
       \frac{a_{00}^\TT}{s}+c_0^\TT+c_1^\TT s+\ldots\bigr) -
       \frac{\Tr\bigl(P\restriction \ker \Delta^\TT\bigr)}{s}\Bigr]\\
       &=- a_{00}^\TT + \Tr\bigl(P\restriction \ker
       \Delta^\TT\bigr).
  \end{split}
\end{equation}
Assumption (1) was used in the second equality and assumptions (3), (4) were used in the penultimate equality. 
Without the assumption (4) the higher derivatives of the function $1/\Gamma(s)$ at $s=0$
would cause additional terms.
Assumption (2) guarantees in particular that $\Tr\bigl(P_{\ker
\Delta_j^\TT}\bigr)$ is
independent of $\TT$. 
\end{proof}

\subsection{Torsion of a finite-dimensional Hilbert complex}\label{ss:TFD}

This Subsection mainly serves the purpose of fixing some notation.
Let $H_1,H_2$ be finite-dimensional Hilbert spaces. For
a linear map $T:H_1\to H_2$ we put 
\begin{equation}\label{eq:Det}
 \Det(T):=\det(T^*T)^{1/2}.
\end{equation}

If $T:H_1\to H_2, S:H_2\to H_3$ are linear maps then obviously
$\Det(TS)=\Det(T) \Det (S)$. Furthermore, given orthogonal decompositions 
$H_j=H_j^{(1)}\oplus H_j^{(2)}, j=1,2$, such that with respect to these 
decompositions we have
\begin{equation}\label{eq:OMZ13}
  T=\begin{pmatrix} T_1& T_{12}\\ 0 & T_2\end{pmatrix},
\end{equation}
then $\Det(T)=\Det(T_1)\Det (T_2)$.

Let $0\to C^0\stackrel{d_0}{\longrightarrow}C^1
\stackrel{d_1}{\longrightarrow}\ldots\stackrel{d_{n-1}}{\longrightarrow}C^n
\longrightarrow 0$ be a finite-dimensional Hilbert complex. Then the torsion
of this complex satisfies
\begin{equation}\label{eq:OMZ14}
   \log T(C^*,d)=\sum_{p\ge 0} (-1)^p \log \Det(d_p:\ker d_p^\perp
\to \im d_p)=: \log \tau(C^*,d).
\end{equation}
Needless to say each finite-dimensional Hilbert complex is automatically a 
Hilbert complex with discrete dimension spectrum. In fact, since the
zeta--function is entire in this case, for the Laplacian of the complex
the set $\Sigma(\Delta)$ defined in Definition \ref{def:FDS} then equals
the set of poles of the $\Gamma$--function, $\{0, -1 , -2, \ldots \}$.

The following two standard results about the torsion and the determinant
will be needed at several places. The first one is elementary, the second
one due to Milnor \cite{Mil:WT}.

\begin{lemma}\label{l:OMZ5} Let $(C^*_k,d^k), k=1,2$, be finite-dimensional
 Hilbert complexes and $\ga:(C_1^*,d^1)\to (C_2^*,d^2)$ be a chain
 isomorphism. Then
 \begin{equation}
  \begin{split}
  \log \tau(C^*_1,d^1)=& \log \tau(C^*_2,d^2) 
  + \sum_{j\ge 0} (-1)^j \log \Det \bigl(\ga_j:C^j_1 \to C^j_2\bigr)\\
    & - \sum_{j\ge 0} (-1)^j \log\Det \bigl( \ga_{j,*}: H^j(C^*_1,d^1)\to
                           H^j(C^*_2,d^2)\bigr).
                          \end{split}
 \end{equation}
\end{lemma}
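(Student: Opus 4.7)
The plan is to derive the formula directly from \eqref{eq:OMZ14}, which writes $\log\tau(C^*_k,d^k)=\sum_j(-1)^j\log\Det(d^k_j)$ with each $d^k_j$ regarded as an isomorphism $(\ker d^k_j)^\perp\to\ran d^k_j$. The idea is to use the Hodge decomposition to put $\gamma_j$ in block upper-triangular form, identify the middle diagonal block with $\gamma_{j,*}$, and then use the chain-map condition to relate the remaining blocks to $\Det(d^1_j)/\Det(d^2_j)$.

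First I would consider, for each $k=1,2$ and each degree $j$, the orthogonal Hodge decomposition $C^j_k=\ran d^k_{j-1}\oplus\ker\Delta^k_j\oplus(\ker d^k_j)^\perp$. Since $\gamma$ is a chain map, $\gamma_j(\ker d^1_j)\subset\ker d^2_j$, and since it is a chain \emph{isomorphism}, unwinding the image gives $\gamma_j(\ran d^1_{j-1})=\ran d^2_{j-1}$. Consequently $\gamma_j$ is block upper-triangular with respect to this decomposition, with diagonal blocks $\alpha_j:=\gamma_j|_{\ran d^1_{j-1}}$, $\beta_j:=P_{\ker\Delta^2_j}\gamma_j|_{\ker\Delta^1_j}$, and $\delta_j:=P_{(\ker d^2_j)^\perp}\gamma_j|_{(\ker d^1_j)^\perp}$. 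Iterating \eqref{eq:OMZ13} yields $\Det(\gamma_j)=\Det(\alpha_j)\Det(\beta_j)\Det(\delta_j)$, and under the Hodge isomorphism $\ker\Delta^k_j\cong H^j(C^*_k,d^k)$ the middle block $\beta_j$ is precisely $\gamma_{j,*}$, since for $h\in\ker\Delta^1_j$ the harmonic representative of $[\gamma_j h]$ is $P_{\ker\Delta^2_j}(\gamma_j h)$.

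Next I would restrict the chain-map identity $d^2_j\gamma_j=\gamma_{j+1}d^1_j$ to $(\ker d^1_j)^\perp$; since $d^2_j$ annihilates $\ker d^2_j$, this collapses to the commutative square $d^2_j\,\delta_j=\alpha_{j+1}\,d^1_j$ between isomorphisms, whose determinants satisfy $\Det(d^1_j)\Det(\alpha_{j+1})=\Det(d^2_j)\Det(\delta_j)$. Combining,
\[
\log\tau(C^*_1,d^1)-\log\tau(C^*_2,d^2)=\sum_j(-1)^j\bigl[\log\Det(\delta_j)-\log\Det(\alpha_{j+1})\bigr],
\]
and substituting $\log\Det(\delta_j)=\log\Det(\gamma_j)-\log\Det(\alpha_j)-\log\Det(\gamma_{j,*})$ from the previous paragraph. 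A reindexing shows $\sum_j(-1)^j\bigl[\log\Det(\alpha_j)+\log\Det(\alpha_{j+1})\bigr]=0$ (here $\alpha_0$ acts on the zero space and contributes $0$), so the $\alpha_j$ terms telescope away and the asserted identity drops out. The main piece of bookkeeping is verifying the upper-triangular structure of $\gamma_j$ in the Hodge decomposition and the identification $\beta_j\leftrightarrow\gamma_{j,*}$; once these are in place, everything else is algebraic shuffling.
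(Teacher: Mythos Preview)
Your argument is correct. You use the Hodge decomposition to put $\gamma_j$ in block upper-triangular form with diagonal blocks $\alpha_j,\beta_j,\delta_j$, identify $\beta_j$ with $\gamma_{j,*}$, and then the chain-map identity $d^2_j\delta_j=\alpha_{j+1}d^1_j$ together with the telescoping of the $\alpha$-terms gives the formula in one pass. All the bookkeeping checks out: the filtration $\ran d^k_{j-1}\subset\ker d^k_j\subset C^j_k$ is preserved by $\gamma_j$ precisely because $\gamma$ is a chain isomorphism, and \eqref{eq:OMZ13} applies twice to give the threefold factorization of $\Det(\gamma_j)$.

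The paper proceeds differently: it verifies the formula by hand for complexes of length~$2$ directly from \eqref{eq:OMZ13}, and then inducts on the length of the complex, omitting the details. Your route is more direct and avoids the induction entirely; it also makes transparent \emph{why} the formula holds, namely that the Hodge decomposition diagonalizes the problem up to the off-diagonal pieces that the determinant ignores. The inductive approach has the advantage of requiring no familiarity with the Hodge decomposition beyond the length-$2$ case, but at the cost of the ``elementary but a little tedious'' step the paper skips. Either way the essential input is the same upper-triangular determinant identity \eqref{eq:OMZ13}.
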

\begin{proof} For complexes of length $2$ the formula follows directly
 from \Eqref{eq:OMZ13}. Then one proceeds by induction on the length
 of the complexes $C_1, C_2$. We omit the elementary but a little
 tedious details.
\end{proof} 

\begin{prop}[{\cite[Thm. 3.1/3.2]{Mil:WT}}]\label{p:OMZ6} Let $0\to
 C_1\stackrel{\ga}{\longrightarrow} C\stackrel{\gb}{\longrightarrow} C_2\to 0$
 be an exact sequence of finite-dimensional Hilbert complexes and let
 \begin{equation}
  \sH: 0\to H^0(C_1)\stackrel{\ga_*}{\longrightarrow}
  H^0(C)\stackrel{\gb_*}{\longrightarrow}
  H^0(C_2)\stackrel{\delta}{\longrightarrow} H^1(C_1)\longrightarrow\ldots
 \end{equation}
 be their long exact cohomology sequence. Then
 \begin{multline}\label{eq:OMZ21}
  \log \tau(C^*,d)= \log \tau(C_1^*,d^1)+\log \tau(C_2^*,d^2)+\log\tau(\sH)\\
  -\sum_{j\ge 0} (-1)^j \log\tau\bigl(0\to C_1^j\stackrel{\ga}{\rightarrow}
  C^j\stackrel{\gb}{\rightarrow}C_2^j\to 0\bigr).
 \end{multline}
\end{prop}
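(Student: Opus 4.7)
The plan is to prove the formula in two steps: first reduce to the case where the short exact sequence is orthogonally split at each level, and then handle that case by explicit computation.

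\textbf{Step 1 (orthogonal reduction via Lemma \ref{l:OMZ5}).} For each $j$ choose a linear section $s_j\colon C_2^j\to C^j$ of $\gb_j$. Let $V^j$ denote the underlying vector space of $C^j$ equipped with the new Hilbert structure in which $\ga(C_1^j)\perp s_j(C_2^j)$ and for which $\ga\colon C_1^j\to V^j$ and $\gb\circ s_j\colon C_2^j\to C_2^j$ are isometries. Pushing forward the differentials, $V^*$ is a Hilbert complex with the same underlying cochain complex as $C^*$, and the identity map is a chain isomorphism $\ga\colon V^*\to C^*$. A direct calculation with \eqref{eq:Det} and \eqref{eq:OMZ13} identifies $\log\Det(\ga_j)=-\log\tau(0\to C_1^j\to C^j\to C_2^j\to 0)$, since both quantities measure exactly the non-orthogonality of the chosen splitting $s_j$. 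Applying Lemma \ref{l:OMZ5} to $\ga$ therefore gives
\[
\log\tau(C^*,d)=\log\tau(V^*)+\sum_{j\ge 0}(-1)^j\log\tau\bigl(0\to C_1^j\to C^j\to C_2^j\to 0\bigr)+\sum_{j\ge 0}(-1)^j\log\Det(\ga_{j,*}),
\]
and it remains to prove $\log\tau(V^*)=\log\tau(C_1^*)+\log\tau(C_2^*)+\log\tau(\sH)-\sum_j(-1)^j\log\Det(\ga_{j,*})$.

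\textbf{Step 2 (split case).} On $V^*$ the SES is orthogonally split and the differential takes the block form
\[
 d_V^j=\begin{pmatrix} d_1^j & \phi^j \\ 0 & d_2^j\end{pmatrix},
\]
where $\phi^j\colon C_2^j\to C_1^{j+1}$ is an anti-chain map whose class on cohomology is the connecting homomorphism $\delta$. I would prove the residual identity by choosing orthonormal bases of each $V^j$ adapted simultaneously to the splitting $V^j=C_1^j\oplus C_2^j$ and to the Hodge decomposition \eqref{eq:HD} of $\Delta_V^j$, then computing $\Det\bigl(d_V^j\colon(\ker d_V^j)^\perp\to\ran d_V^j\bigr)$ block by block. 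The diagonal blocks contribute $\log\tau(C_1^*)+\log\tau(C_2^*)$, while the off-diagonal contribution, after the Hodge identification of each $H^j(V^*)$ as a concrete subspace of $\ker\Delta_V^j\subset V^j$, telescopes into the determinants of the maps $\ga_*,\gb_*,\delta$ appearing in $\sH$; the residual $-\sum(-1)^j\log\Det(\ga_{j,*})$ accounts for the discrepancy between the natural Hilbert structures on $H^j(V^*)$ and those used to define $\log\tau(\sH)$.

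\textbf{Main obstacle.} The hardest point is the telescoping step: matching the $\phi$-dependent off-diagonal contribution to $\log\tau(V^*)$ with $\log\tau(\sH)$ (shifted by the cohomology determinants $\log\Det(\ga_{j,*})$) requires patient bookkeeping of how $\phi$ acts on Hodge representatives and how $\delta$ is then expressed in those representatives. This is essentially the content of Milnor's original argument \cite{Mil:WT}; alternatively one can reduce to his case by induction on the length of the complex, peeling off one degree at a time and using \eqref{eq:OMZ13} together with Lemma \ref{l:OMZ5} applied to suitable subcomplexes at each step.
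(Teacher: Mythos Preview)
Your strategy is exactly what the paper indicates: it does not give a self-contained proof but remarks that the proposition ``is a combination of \cite[Thm.~3.2]{Mil:WT} and the previous Lemma~\ref{l:OMZ5}'' and refers to \cite[Thm.~1.14]{BurFriKap:TMB} for details. Your Step~1 (reduce via Lemma~\ref{l:OMZ5} to an orthogonally split short exact sequence) and Step~2 (the split case, which is Milnor's theorem) are precisely this combination, and your Step~2 is at the same level of detail as the paper's treatment.

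There is, however, a sign slip in Step~1 that makes your two displayed formulas combine to the wrong conclusion. Writing $\iota\colon V^j\to C^j$ for the identity (to avoid the clash with the inclusion $\ga\colon C_1\to C$), an orthonormal basis of $V^j$ is $\{\ga(e_l),s_j(f_k)\}$, and the block-triangular device \eqref{eq:OMZ13} applied to the isomorphism $(\ga,s_j)\colon C_1^j\oplus C_2^j\to C^j=\im\ga\oplus(\im\ga)^\perp$ gives
\[
\Det(\iota)^2=\det(\ga^*\ga)\big/\det(\gb\gb^*),
\]
independently of the chosen section $s_j$. Comparing with \eqref{eq:OMZ26} one finds $\log\Det(\iota)=+\log\tau(0\to C_1^j\to C^j\to C_2^j\to 0)$, not $-$. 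With the corrected sign, Lemma~\ref{l:OMZ5} reads
\[
\log\tau(C^*)=\log\tau(V^*)-\sum_{j\ge 0}(-1)^j\log\tau\bigl(0\to C_1^j\to C^j\to C_2^j\to 0\bigr)+\sum_{j\ge 0}(-1)^j\log\Det(\iota_{j,*}),
\]
and together with your (correct) residual identity for $\log\tau(V^*)$ this yields \eqref{eq:OMZ21}. As written, your two equations add up to \eqref{eq:OMZ21} with the correction sum carrying the opposite sign.
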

In fact the Proposition as stated is a combination of \cite[Thm.~3.2]{Mil:WT} 
and the previous Lemma \plref{l:OMZ5}. The last term in \Eqref{eq:OMZ21}
does not appear in \cite[Thm.~3.2]{Mil:WT} since there one is given
\emph{preferred} bases of $C_1, C, C_2$ which are \emph{compatible}.
In our Hilbert complex setting the preferred bases are the orthonormal
ones. The last term in \Eqref{eq:OMZ21} makes up for the fact that
in general it is not possible to choose
orthonormal bases of $C_1, C, C_2$ which are compatible in the sense of loc.
cit. For a proof in the more general von Neumann setting see
\cite[Theorem 1.14]{BurFriKap:TMB}.

For future reference we note that for the acyclic complex
$(0\to C_1^j\stackrel{\ga}{\rightarrow} C^j\stackrel{\gb}{\rightarrow}C_2^j\to 0\bigr)$
of length 2 on the right of \Eqref{eq:OMZ21} it follows from the definition \Eqref{eq:OMZ14} that
\begin{multline}\label{eq:OMZ26} 
  \log\tau\bigl(0\to C_1^j\stackrel{\ga}{\rightarrow}
  C^j\stackrel{\gb}{\rightarrow}C_2^j\to 0\bigr)\\
  =\frac 12\log \Det(C_1^j\stackrel{\ga^*\ga}{\longrightarrow} C_1^j)
  - \frac 12 \log \Det(C_2^j\stackrel{\gb\gb^*}{\longrightarrow} C_2^j).
 \end{multline}  

 Finally, we remind the reader of the (trivial) fact that if in Prop. \ref{p:OMZ6}
 the complex $C$ equals $C_1\oplus C_2$, $\ga$ the inclusion and $\gb$
 the projection onto the second summand then $\log\tau(\sH)=0$
 and $\log \tau(C^*,d)= \log \tau(C_1^*,d^1)+\log \tau(C_2^*,d^2)$.


\section{Elementary operator gluing and heat kernel estimates on non-compact
manifolds}\label{s:EOG} 
\begin{figure}
\includegraphics[width=10cm]{./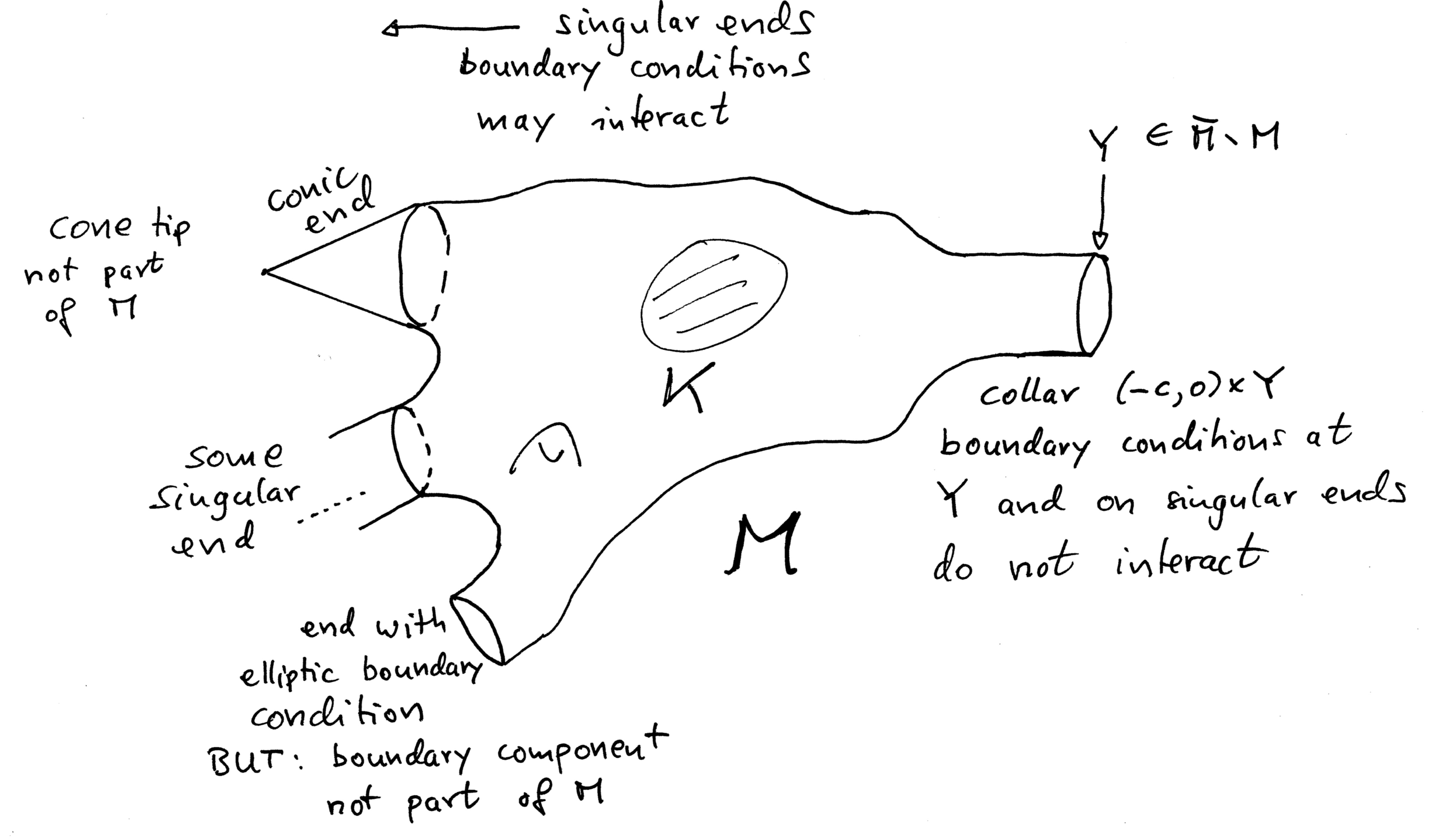}
\caption{\label{fig:singmani} Example of a singular manifold.}
\end{figure}
\subsection{Standing assumptions}\label{ss:EOGSA} Let $M^m$ be a Riemannian
manifold of dimension $m$; it is essential to note that $M^m$ is \emph{not}
necessarily complete, \cf Figure \ref{fig:singmani}. 
Furthermore, let $P_0:\ginfz{M,E}\longrightarrow \ginfz{M,E}$ be a second order
formally selfadjoint elliptic differential operator acting on the compactly
supported sections, $\ginfz{M,E}$,
of the Hermitian vector bundle $E$. We assume that $P_0$ is bounded below
and we fix once and for all a bounded below selfadjoint extension $P$
of $P_0$ in the Hilbert space of square-integrable sections $L^2(M,E)$, \eg the Friedrichs extension.

Later on we will need a class of operators which is slightly more general
than (pseudo)differential operators. For our purposes it will suffice
to consider an auxiliary operator $V$ which for each real $s$ maps
\begin{equation}\label{eq:EOG1}
V: H^s_{\loc}(M,E)\longrightarrow H^{s-1}_{\comp}(M,E)
\end{equation}
the space $H^s_{\loc}(M,E)$ of sections, which are locally of Sobolev class
$s$, continuously into the space of compactly supported sections of
Sobolev class $s-1$, \cf \cite[Sec.~I.7]{Shu:POS}.
We assume that $V$ is symmetric with respect to the $L^2$-scalar product
on $E$, \ie $\scalar{Vf}{g}=\scalar{f}{Vg}$ for $f\in H^1_{\loc}(M,E),
g\in L^2_{\loc}(M,E)$.

Finally, we assume that $V$ is confined to a compact subset $\cK\subset M$
in the sense that
\begin{equation}\label{eq:EOG2}
   M_\varphi V = V M_\varphi =0
\end{equation}
for any smooth function vanishing in a neighborhood of $\cK$. \Eqref{eq:EOG2}
implies that $V$ commutes with $M_\varphi$
for any smooth function which is \emph{constant} in a neighborhood of $\cK$.
Our main example is the operator $\tilde\Delta^\TT$ defined
after \Eqref{eq:GTP6} below.

In view of \Eqref{eq:EOG1} and the ellipticity of $P_0$, the operator
$V$ is $P$-bounded with arbitrarily small bound, thus $P+V$ is selfadjoint
and bounded below as well.

With regard to the mapping property \Eqref{eq:EOG1} of $V$
we introduce the space $\Opc^a(M,E)$ of linear operators $A$
mapping $H^s_{\loc}$ continuously into $H^{s-a}_{\comp}$
and whose Schwartz kernel $K_A$ is compactly supported.
Obvious examples are pseudodifferential operators with
compactly supported Schwartz kernel, but also certain
Fourier integral operators. The point is that elements
in $\Opc$ are not necessarily pseudolocal. Note that
$V$ is in $\Opc^1(M,E)$.

The set-up outlined in this Subsection \ref{ss:EOGSA} will be in
effect during the remainder of this Section \ref{s:EOG}. 

\subsection{Heat kernel estimates for $P+V$}\label{ss:HKE}

\begin{lemma}\label{l:EOG1}
For all $s\ge 0$ we have $\dom(P+V)^s=\dom(P^s)$.
Furthermore, the operator $e^{-t(P+V)}$, $t>0$,
has a smooth integral kernel.
\end{lemma}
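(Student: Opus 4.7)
The plan is to establish the domain equality first by induction on integer exponents plus interpolation, and then deduce the smooth kernel from the resulting mapping properties of $e^{-t(P+V)}$ combined with self-adjointness.

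The base case $s=1$ is precisely the Kato--Rellich statement already noted in the paragraph preceding the lemma: \eqref{eq:EOG1} and elliptic regularity of $P$ make $V$ $P$-bounded with arbitrarily small bound, so $\dom(P+V)=\dom(P)$. For integer $n\ge 1$ I would prove simultaneously by induction that $\dom((P+V)^n)=\dom(P^n)$ and that this common domain lies in $H^{2n}_{\loc}(M,E)$. Given $f\in\dom((P+V)^{n+1})$, the inductive hypothesis yields $(P+V)f\in\dom(P^n)\subset H^{2n}_{\loc}$ and $f\in\dom(P)\subset H^2_{\loc}$; writing $Pf=(P+V)f-Vf$ and using the mapping property $Vf\in H^{k-1}_{\comp}$ whenever $f\in H^k_{\loc}$ allows a one-order-at-a-time bootstrap via elliptic regularity of $P$ until $f\in H^{2n+2}_{\loc}$. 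Then $Vf\in H^{2n+1}_{\comp}$ is compactly supported inside $\cK$, and the core sub-lemma---\emph{any compactly supported $H^{2n}$-section of $E$ belongs to $\dom(P^n)$}---forces $Vf\in\dom(P^n)$ and hence $f\in\dom(P^{n+1})$; the reverse inclusion is symmetric. I would verify the sub-lemma by mollification: since $P\supset P_0$ one has $\ginfz{M,E}\subset\dom(P^k)$ for every $k$ with $P^k=P_0^k$ there, and a compactly supported $H^{2n}$-section is an $H^{2n}$-limit of a sequence in $\ginfz{M,E}$ with uniformly compact support, so iterated closedness of $P$ yields the claim.

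For non-integer $s$ I would pass to complex interpolation. After a shift so that both $P$ and $P+V$ are non-negative, $\dom(P^s)=[L^2(M,E),\dom(P^n)]_{s/n}$ for integers $n>s$, and likewise for $P+V$; since the endpoints coincide as Banach spaces with equivalent graph norms (the latter by the closed graph theorem applied to the identity map between them), the interpolation spaces coincide. For the smoothness of the heat kernel, the domain equality combined with elliptic regularity of $P$ yields
\[
  e^{-t(P+V)}\colon L^2(M,E)\longrightarrow \bigcap_{n\ge 0}\dom\bigl((P+V)^n\bigr)=\bigcap_{n\ge 0}\dom(P^n)\subset C^\infty(M,E),
\]
continuous into $C^\infty_{\loc}$ via Sobolev embedding and the closed graph theorem. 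Dualizing this map and invoking self-adjointness of $e^{-t(P+V)}$ extends it continuously as $e^{-t(P+V)}\colon\mathcal{E}'(M,E)\to L^2(M,E)$; writing $e^{-t(P+V)}=e^{-(t/2)(P+V)}\circ e^{-(t/2)(P+V)}$ then yields a continuous map $\mathcal{E}'(M,E)\to C^\infty(M,E)$, and the Schwartz kernel theorem produces a smooth integral kernel.

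The main obstacle I anticipate is the bootstrap step combined with the sub-lemma: one must convert the abstract $L^2$-theoretic equality of domains into concrete local Sobolev regularity while keeping careful track of the support condition \eqref{eq:EOG2}, and then show that compactly supported sections of high regularity are automatically captured by the chosen self-adjoint extension $P$---a point that hinges only on $P\supset P_0$ rather than on any specific feature of the extension.
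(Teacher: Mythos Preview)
Your proposal is correct and follows essentially the same route as the paper: induction on integer exponents using elliptic regularity of $P$ and the mapping property \eqref{eq:EOG1} of $V$, then complex interpolation for general $s\ge 0$, and finally the self-adjointness of $e^{-t(P+V)}$ together with $\bigcap_n\dom((P+V)^n)=\bigcap_n\dom(P^n)\subset\Gamma^\infty(M,E)$ for the smooth kernel. You have simply spelled out details (the sub-lemma on compactly supported $H^{2n}$-sections and the Schwartz-kernel/duality step) that the paper leaves implicit.
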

\begin{proof}
 By complex interpolation \cite[Sec.~4.2]{Tay:PDEI} it suffices
 to prove the first claim for $s=k\in\N$ where it follows easily
 by induction exploiting the elliptic regularity for $P$ and
 \Eqref{eq:EOG1}.

 Consequently, $e^{-t(P+V)}$ is a selfadjoint operator which
 maps $L^2(M,E)$ into
 \begin{equation}
  \bigcap_{k\ge 0} \dom((P+V)^k) = \bigcap_{k\ge 0} \dom(P^k)
\end{equation}
and the latter is contained in $\ginf{M,E}$ by elliptic regularity.
This implies smoothness of the kernel of $e^{-t(P+V)}$.
\end{proof}
   
\begin{prop}\label{p:EOG2} Let $A\in \Opc^a(M,E), B\in \Opc^b(M,E)$
 with compactly supported Schwartz kernels $K_A, K_B$. 
 Denote by $\pi_j:M\times M\to M, j=1,2$, the projections onto
 the first resp.~second factor and suppose that
 $\pi_2(\supp K_A)\cap \pi_1(\supp K_B)=\emptyset$
 and $\pi_2(\supp K_A)\cap \cK=\emptyset$ (for $\cK$ \cf Subsec. \ref{ss:EOGSA}).

 Then $A e^{-t(P+V)} B$ is a trace class operator and 
 \begin{equation}
  \bigl\| Ae^{-t(P+V)} B \bigr\|_{\tr} = O(t^\infty), \quad t\to 0+.
 \end{equation}
\end{prop}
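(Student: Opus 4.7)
The plan is an iterated Duhamel expansion via commutators with carefully chosen cutoff functions. The hypothesis \Eqref{eq:EOG2} is what makes this work: it forces $V$ to commute with any $M_\varphi$ where $\varphi$ vanishes in a neighborhood of $\cK$, so that $V$ drops out of all relevant commutators and the analysis effectively reduces to the differential operator $P$ alone.

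First I would pick $\varphi \in C^\infty_c(M)$ with $\varphi \equiv 1$ on a neighborhood of $\pi_2(\supp K_A)$ and $\supp \varphi$ disjoint from $\pi_1(\supp K_B) \cup \cK$; such a $\varphi$ exists by the disjointness assumptions of the proposition, since $\pi_2(\supp K_A)$ is compact and disjoint from the compact set $\pi_1(\supp K_B) \cup \cK$. Then automatically $A\varphi = A$, $\varphi B = 0$, and $[V,\varphi] = 0$ by \Eqref{eq:EOG2}, so $[P+V, \varphi] = [P,\varphi]$ is a first-order differential operator with smooth coefficients supported in $\supp d\varphi$, a compact set disjoint from $\pi_2(\supp K_A)$, $\pi_1(\supp K_B)$, and $\cK$. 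Duhamel's identity $[\varphi, e^{-t(P+V)}] = \int_0^t e^{-(t-s)(P+V)}[P+V,\varphi] e^{-s(P+V)}\,ds$ together with $A\varphi = A$, $\varphi B = 0$ then yields
\[
A e^{-t(P+V)} B \;=\; A[\varphi, e^{-t(P+V)}] B \;=\; \int_0^t A e^{-(t-s)(P+V)}[P,\varphi]\, e^{-s(P+V)} B \, ds.
\]
Crucially, both $A e^{-(t-s)(P+V)}[P,\varphi]$ and $[P,\varphi] e^{-s(P+V)} B$ satisfy the same disjoint-support hypotheses as the original operator, with $[P,\varphi]$ playing the role of $B$ respectively $A$, so the commutator trick iterates.

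Using a nested family $\varphi_1,\ldots,\varphi_N \in C^\infty_c(M)$ with $\varphi_j \equiv 1$ on $\pi_2(\supp K_A) \cup \supp \varphi_{j-1}$ and $\supp \varphi_j$ disjoint from $\pi_1(\supp K_B) \cup \cK$, iterating Duhamel $N$ times produces
\[
A e^{-t(P+V)} B = \int_{\Delta_N(t)} A\, e^{-u_0(P+V)}\, [P,\varphi_1]\, e^{-u_1(P+V)}\, \cdots\, [P,\varphi_N]\, e^{-u_N(P+V)}\, B\, du,
\]
the integral being over the simplex $\Delta_N(t) = \{(u_0,\ldots,u_N) : u_i \ge 0,\, \sum u_i = t\}$ of volume $t^N/N!$. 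Since $A$, $B$, and every $[P,\varphi_j]$ have compactly supported Schwartz kernels, the integrand has a compactly supported kernel, smooth in the interior by Lemma \ref{l:EOG1} and elliptic regularity for $P+V$. Its trace norm is then controlled by a $C^k$-norm of the kernel for $k > m$; one distributes the smoothing across the heat factors via $e^{-u_i(P+V)} = e^{-u_i(P+V)/2}\cdot e^{-u_i(P+V)/2}$ and applies H\"older for Schatten classes. Combined with the simplex volume $t^N/N!$, this gives $\|A e^{-t(P+V)} B\|_{\tr} \le C_N t^N$, and since $N$ is arbitrary the claim follows.

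The hard part will be the uniform trace-norm bound on the integrand as some $u_i$ approach $0$: on non-compact $M$ the operator $e^{-u(P+V)}$ is not itself trace class, so the estimate cannot be read off from any global integrability but must be extracted from the compactness of the cutoff supports and the smoothness of the heat kernel. What ultimately rescues the argument is that the entire iterated composition inherits compact support from $A$, $B$, and the intermediate commutators, allowing the standard Sobolev/Schatten-class estimates on a fixed compact region to bound the integrand uniformly in $u$ for $t$ ranging in any bounded interval.
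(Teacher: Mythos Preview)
Your strategy is sound and uses the same core ingredients as the paper --- Duhamel with cutoffs separating $\pi_2(\supp K_A)$ from $\pi_1(\supp K_B)$, together with the observation that $[P+V,\varphi]=[P_0,\varphi]$ because $\varphi$ is constant near $\cK$ --- but the organization differs. The paper does not iterate Duhamel $N$ times. Instead it reduces the trace estimate to Sobolev mapping norms and bootstraps: starting from the trivial bound $\|Ae^{-t(P+V)}B\|_{\ga,\ga-a-b}=O(1)$, a \emph{single} Duhamel step together with the spectral estimate $\|(P+V+C)^{3/4}e^{-u(P+V)}\|=O(u^{-3/4})$ improves the Sobolev index by $1/2$ and the $t$-exponent by $1/4$ simultaneously; induction on these two parameters then yields arbitrary Sobolev gain and arbitrary $t$-decay, after which the compact kernel supports convert this into the trace-norm statement.

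Your full simplex expansion also works, but one point needs sharpening: the integrand is \emph{not} uniformly bounded in trace norm over $\Delta_N(t)$, contrary to what your last paragraph suggests. Near a vertex of the simplex only one $u_i$ is bounded below (by $t/(N+1)$), and that single heat factor must supply all $a+b+N+m+\eps$ derivatives --- the $N$ coming from the first-order commutators $[P,\varphi_j]$ --- so the integrand's trace norm blows up like $t^{-(a+b+N+m+\eps)/2}$. Multiplying by the simplex volume $t^N/N!$ leaves a net exponent of roughly $N/2-(a+b+m+\eps)/2$, not $N$; since this still tends to $+\infty$ with $N$ the conclusion $O(t^\infty)$ survives, but the claimed bound $C_N t^N$ is too optimistic. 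The paper's inductive scheme avoids this bookkeeping by improving regularity and decay in lockstep, which is what it buys over the direct expansion.
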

Here $O(t^\infty)$ is an abbreviation for $O(t^N)$ for any $N$;
the $O$-constant may depend on $N$. Furthermore, $\|\cdot\|_{\tr}$
denotes the trace norm on the Schatten ideal of trace class
operators.
\begin{proof} (\textit{cf.~}\cite[Sec.~I.4]{Les:OFT}).
Since the Schwartz kernels are compactly supported it suffices
 to prove that for all real $\ga,\gb$ and all $N>0$ we have
 \begin{equation}\label{eq:EOG3}
  \bigl\| Ae^{-t(P+V)} B \bigr\|_{\ga,\gb} = O(t^N), \quad t\to 0+.
 \end{equation}
 Here, $\|\cdot\|_{\ga,\gb}$ stands for the mapping norm between
 the Sobolev spaces $H^\ga(\pi_2(\supp K_B),E)$ and 
 $H^\gb(\pi_1(\supp K_A),E)$. The $O$-constant may depend on $A,B, \ga,\gb, N$.

\Eqref{eq:EOG3} follows from Duhamel's formula by a standard bootstrapping
 argument as follows: note first, that the mapping properties of $A,B$
 and $P+V$ imply that for real $\ga$
 \begin{equation}\label{eq:EOG4}
  \bigl\| Ae^{-t(P+V)} B \bigr\|_{\ga,\ga-a-b} = O(1), \quad t\to 0+.
 \end{equation}

Assume by induction that for fixed $l, N$,
for all $A,B$ satisfying our assumptions and for all
real $\ga$
 \begin{equation}\label{eq:EOG5}
  \bigl\| Ae^{-t(P+V)} B \bigr\|_{\ga,\ga-a-b+l} = O(t^N), \quad t\to 0+.
 \end{equation}
 Fix plateau functions $\chi,\varphi,\psi\in \cinfz{M}$ with the following
 properties:
 \begin{enumerate}
  \item $\varphi \equiv 1$ in a neighborhood  of $\pi_2(\supp K_A)$ and
   $\supp \varphi \cap \cK =\emptyset$.
  \item $\psi\equiv 1$ in a neighborhood of $\pi_1(\supp K_B)$. 
  \item $\chi\equiv 1$ in a neighborhood of $\supp \varphi$ and $\supp
   \chi\cap \cK=\emptyset$.
  \item $\supp \chi\cap \supp \psi=\emptyset$.
 \end{enumerate} 
Then
\begin{multline}
 \bigl\| Ae^{-t(P+V)} B \bigr\|_{\ga,\ga-a-b+l+1/2} = 
       \bigl\| A \varphi e^{-t(P+V)} \psi  B \bigr\|_{\ga,\ga-a-b+l+1/2} \\
       \le C_1  \bigl\| \varphi e^{-t(P+V)} \psi  \bigr\|_{\ga-b,\ga-b+l+1/2}.
\end{multline}
From
\begin{equation}
 \bigl(\pl_t + P+V \bigr) \varphi e^{-t(P+V)} \psi
      =\chi [P_0,\varphi] e^{-t(P+V)} \psi,
\end{equation}
where $[P_0,\varphi]$ denotes the commutator between the differential
expression $P_0$ and multiplication by $\varphi$, we infer
\begin{equation}
\varphi e^{-t(P+V)} \psi 
   = \int_0^t \chi e^{-(t-s)(P+V)} \chi [P_0,\varphi] e^{-s(P+V)} \psi ds;
\end{equation}
here we have used the assumptions on the support of $\chi,\psi,\varphi$ and
\Eqref{eq:EOG2}. In the displayed formulas we wrote, to save some space,
$\chi,\psi,\varphi$ for the multiplication operators
$M_\chi,M_\psi,M_\varphi$, resp.

For $\tilde\ga=\ga-b$ we now find
\begin{multline}
 \bigl\| \varphi e^{-t(P+V)} \psi \bigr\|_{\tilde\ga,\tilde \ga+l+1/2}\\
    \le \int_0^t \bigl\| \chi e^{-(t-s)(P+V)} \chi
    \bigr\|_{\tilde\ga-1+l,\tilde\ga+l+1/2} \bigl\|  [P_0,\varphi] e^{-s(P+V)}
    \psi \bigr \|_{\tilde\ga,\tilde\ga-1+l} ds.
\end{multline}
Since $[P_0,\varphi]$ is in $\Opc^1$ we find using \Eqref{eq:EOG5}
\begin{equation}\label{eq:1209103}  
    \bigl\|  [P_0,\varphi] e^{-s(P+V)} \psi \bigr \|_{\tilde\ga,\tilde\ga-1+l}
    = O(s^N), \quad \text{as } s\to 0+.
\end{equation}
Furthermore, denoting by $C$ a constant such that $P\ge -C+1$,
\begin{align}   
    \bigl\| &\chi e^{-u(P+V)} \chi
    \bigr\|_{\tilde\ga-1+l,\tilde\ga+l+1/2}\nonumber \\
     \le& \bigl\| (P+V+C)^{(\tilde\ga+l-1)/2} \chi\bigr\|_{\tilde \ga
    -1+l,0}\,\cdot\, \label{eq:1209105}\\
    &\quad \cdot\,\bigl\|(P+V+C)^{3/4} e^{-u(P+V)} \bigr\|_{0,0}\, \cdot\,
    \bigl\|\chi (P+V+C)^{-(\tilde\ga+l+1/2)/2}\bigr\|_{0,\tilde \ga+l+1/2}.
    \nonumber
\end{align}
The first and the third factor on the right are bounded while for the second
factor we have by the Spectral Theorem
\begin{equation}\label{eq:1209106}  
 \bigl\|(P+V+C)^{3/4} e^{-u(P+V)} \bigr\|_{0,0}=O(u^{-3/4}),\quad \text{as }
 u\to 0+.
\end{equation}
Thus
\begin{equation}
 \bigl\| \varphi e^{-t(P+V)} \psi \bigr\|_{\tilde\ga,\tilde \ga+l+1/2}
  \le C_1 \int_0^t (t-s)^{-3/4} s^N ds = O( t^{N+1/4} ), \quad t\to 0+.
\end{equation}
Thus we have improved the parameters $l$ and $N$ in \Eqref{eq:EOG5}
by $1/2$ resp. $1/4$ and therefore the result follows by induction.
\end{proof} 

\begin{prop}\label{p:EOG3} Under the Standing Assumptions \ref{ss:EOGSA}
 let $\varphi,\psi\in \cinf{M}$ with
 $\supp \varphi\cap \supp\psi$ being compact (the individual supports of
 $\varphi$ or $\psi$ may be non-compact!)
 such that $d\varphi, d\psi$ are compactly supported and that
 $\supp d\varphi \cap \cK=\emptyset= \supp d\psi\cap \cK$. Furthermore,
 assume that multiplication by $\varphi$ and by $\psi$ preserves
 $\dom(P+V)=\dom(P)$. 

 Then for $t>0$ the operator $\varphi e^{-t(P+V)} \psi$ is trace class
 and 
 \begin{equation}
  \bigl \| \varphi e^{-t(P+V)} \psi\bigr\|_{\tr} =O ( t^{-m/2-0}), \quad t\to
  0+.
 \end{equation}  
If $\supp\varphi\cap \supp \psi=\emptyset$ then the right hand side
can be improved to $O(t^\infty), t\to 0+$.
\end{prop}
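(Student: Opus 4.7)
My plan is to adapt the Duhamel bootstrapping of Prop.~\ref{p:EOG2} to accommodate non-compactly-supported multipliers. Since $K:=\supp\varphi\cap\supp\psi$, $\cK$, $\supp d\varphi$ and $\supp d\psi$ are all compact, I choose a plateau function $\chi\in\cinfz{M}$ equal to $1$ on an open neighborhood of their union and split $\varphi=\chi\varphi+(1-\chi)\varphi$, $\psi=\chi\psi+(1-\chi)\psi$. The resulting expansion of $\varphi e^{-t(P+V)}\psi$ into four pieces has the following structure: the three off-diagonal cross terms all have disjoint multiplication supports (any intersection would lie in $K\cap\{\chi\ne 1\}=\emptyset$), and the noncompact factors $(1-\chi)\varphi$, $(1-\chi)\psi$ vanish near $\cK$ so that $V$ commutes with them and $d((1-\chi)\varphi)$, $d((1-\chi)\psi)$ are compactly supported off $\cK$.

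The local piece $(\chi\varphi)\,e^{-t(P+V)}\,(\chi\psi)$ has the compactly supported smooth Schwartz kernel $\chi(x)\varphi(x)k_t(x,y)\chi(y)\psi(y)$ by Lemma~\ref{l:EOG1}. Factoring it as $(\chi\varphi\, e^{-t(P+V)/2})(e^{-t(P+V)/2}\,\chi\psi)$ and bounding trace norm by the product of Hilbert--Schmidt norms $\Tr(|\chi\varphi|^2 e^{-t(P+V)})^{1/2}\cdot\Tr(|\chi\psi|^2 e^{-t(P+V)})^{1/2}$ reduces matters to the standard short-time on-diagonal heat-kernel estimate on the relatively compact region $\supp\chi$, giving $O(t^{-m/2-0})$. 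For each off-diagonal piece $Ae^{-t(P+V)}B$, pick a compactly supported plateau $\eta$ equal to $1$ on the compactly supported factor (if any) with $\supp\eta\subset\{\chi=1\}^\circ$ and $\supp d\eta\cap\cK=\emptyset$; then $\eta(1-\chi)=0$, $[V,\eta]=0$, and the Duhamel identity
\[
 A\,e^{-t(P+V)}B=\int_0^t A\,e^{-(t-s)(P+V)}\,[P_0,\eta]\,e^{-s(P+V)}\,B\,ds
\]
holds, because the $s=t$ endpoint equals $A\eta\,e^{-t(P+V)}B=Ae^{-t(P+V)}B$ while the $s=0$ endpoint equals $A\,e^{-t(P+V)}\eta B=0$. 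Prop.~\ref{p:EOG2} (using cyclicity $\|T\|_{\tr}=\|T^*\|_{\tr}$ to position the $\Opc^1$ factor correctly) bounds $A\,e^{-(t-s)(P+V)}\,[P_0,\eta]$ by $O((t-s)^\infty)$ in trace norm; the remaining factor $e^{-s(P+V)}B$ is operator-norm bounded by the domain-preservation hypothesis, and integrating gives $O(t^\infty)$. In the all-noncompact case $A=(1-\chi)\varphi$, $B=(1-\chi)\psi$, I first commute $A$ through the semigroup using $[V,A]=0$ and $dA$ compact off $\cK$, obtaining $\int_0^t e^{-(t-s)(P+V)}[P_0,A]\,e^{-s(P+V)}B\,ds$ (the boundary term $e^{-t(P+V)}AB$ vanishes since $AB=(1-\chi)^2\varphi\psi=0$), then apply the plateau trick to the compactly supported middle factor.

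The main obstacle is that $M_\varphi,M_\psi$ are \emph{not} in $\Opc(M,E)$ when $\varphi$ or $\psi$ has noncompact support, so Prop.~\ref{p:EOG2} cannot be invoked directly. Trading outer multipliers for compactly supported commutators via the plateau Duhamel trick bridges this gap; the hypotheses that $\supp\varphi\cap\supp\psi$ be compact, that $d\varphi,d\psi$ be compactly supported off $\cK$, and that $M_\varphi,M_\psi$ preserve $\dom(P+V)$ are precisely what is needed to construct $\chi,\eta$, to ensure the commutators lie in $\Opc^1$, and to legitimize the Duhamel manipulations.
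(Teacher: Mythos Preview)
Your strategy is the same as the paper's---Duhamel's formula together with Prop.~\ref{p:EOG2}---only organised via a four-term $\chi$-splitting rather than the paper's more economical reduction (first assume $\psi$ compactly supported and apply Duhamel once with $[P_0,\varphi]$; then swap roles by taking adjoints; finally observe that for general $\varphi,\psi$ the same Duhamel identity \eqref{eq:EOG6} already places the compactly supported commutator $[P_0,\varphi]$ inside, reducing to the case just treated).

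Two points in your write-up need adjustment. First, the constraint $\supp\eta\subset\{\chi=1\}^\circ$ is incompatible with $\eta\equiv 1$ on $\supp(\chi\varphi)$, since the latter generically meets $\{0<\chi<1\}$; what you actually need---and what is achievable because $\supp A\cup\cK$ is compact and disjoint from the closed set $\supp B$---is $\eta\equiv 1$ near $\supp A\cup\cK$ and $\supp\eta\cap\supp B=\emptyset$. Second, invoking a ``standard on-diagonal heat-kernel estimate'' for $P+V$ is not justified by anything in the paper (recall $V\in\Opc^1$ need not be pseudodifferential over $\cK$), and your identity $\|\chi\varphi\,e^{-t(P+V)/2}\|_{HS}^2=\Tr(|\chi\varphi|^2 e^{-t(P+V)})$ presupposes the very trace-class property at issue. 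The paper's ``Sobolev embedding'' route avoids this: factor $\chi\varphi\,e^{-t(P+V)/2}=\bigl(\chi\varphi\,(P+V+C)^{-N}\bigr)\bigl((P+V+C)^N e^{-t(P+V)/2}\bigr)$; by Lemma~\ref{l:EOG1} and elliptic regularity the first factor maps $L^2$ into $H^{2N}$ with support in the compact set $\supp(\chi\varphi)$, hence is Hilbert--Schmidt once $2N>m/2$, while the second has operator norm $O(t^{-N})$ by the spectral theorem. Taking $N>m/4$ in each HS factor yields the claimed $O(t^{-m/2-0})$.
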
 
Here $O(t^{-m/2-0})$ is an abbreviation for $O(t^{-m/2-\eps})$ for any $\eps>0$;
the $O$-constant may depend on $\eps$. 
\begin{proof} Assume first that additionally $\psi$ is compactly supported.
Again applying Duhamel we find
\begin{equation}\label{eq:EOG6}
\varphi e^{-t(P+V)} \psi 
   = \int_0^t  e^{-(t-s)(P+V)} [P_0,\varphi] e^{-s(P+V)} \psi ds.
\end{equation}
Now apply Lemma \ref{l:EOG1} and Proposition \ref{p:EOG2} to 
the operator $[P_0,\varphi] e^{-s(P+V)} \psi$. If $\supp\varphi\cap
\supp\psi\not=\emptyset$ then the trace norm estimate is a simple
consequence of Sobolev embedding and the established mapping properties.
If $\supp \varphi\cap\supp\psi=\emptyset$ then Proposition \ref{p:EOG2}
implies $\bigl\| [P_0,\varphi]  e^{-s(P+V)} \psi \bigr\|_{\tr}=O(t^\infty)$
and the claim follows in this case.

Since $e^{-t(P+V)}$ is selfadjoint the roles of $\varphi,\psi$ may be
interchanged by taking adjoints and hence the Proposition is proved
if $\varphi$ \emph{or} $\psi$ is compactly supported. The general
case now follows from formula \Eqref{eq:EOG6} since the compactness
of $\supp d\varphi$ implies the compactness of the support of the
Schwartz kernel of $[P_0,\varphi]$.
\end{proof}

\subsection{Operator Gluing}\label{ss:OG}
\begin{figure}
\includegraphics[width=10cm]{./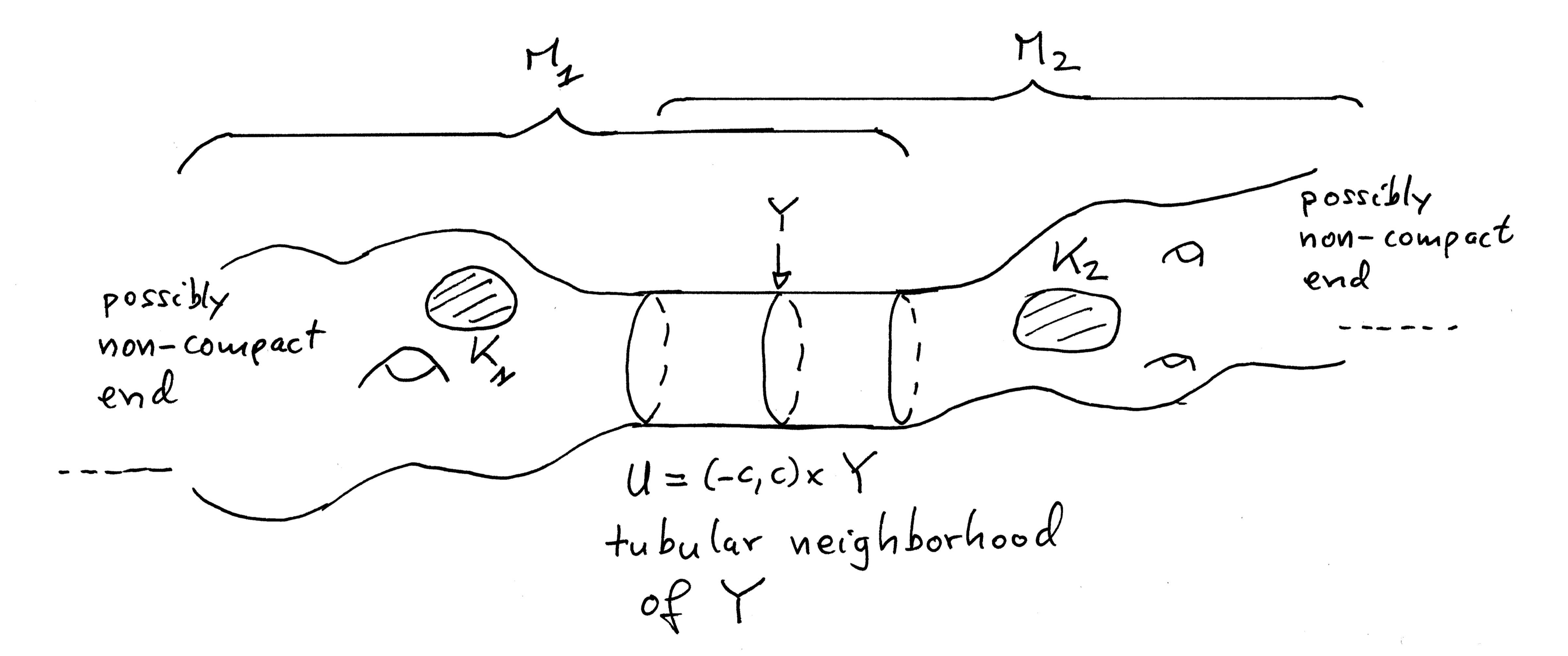}
\caption{\label{fig:gluing} The gluing situation.}
\end{figure}
Now we assume that we have two triples $(M_j,P_j^0,V_j), j=1,2$
consisting of Riemannian manifolds $M_j^m$ and operators $P_j^0$, $V_j$
satisfying the Standing Assumptions \plref{ss:EOGSA}.

Furthermore, we assume that each $M_j$ is the interior of a manifold
$\ovl{M_j}$ with compact boundary $Y$ (it is essential that $\ovl{M_j}$
is not necessarily compact). Let $U=Y\times (-c,c)$ be a common collar
of $Y$ in $M_1$ resp. $M_2$ such that $\pl M_1=Y\times \{1\}$ and 
$\pl M_2=Y\times \{-1\}$.

We assume that the sets $\cK_j$ corresponding to $V_j$ (\textit{cf.~}\Eqref{eq:EOG2})
lie in $M_j\setminus U$ and that $P_1^0$ coincides with $P_2^0$ over $U$.
Then $P_j^0$ and $V_j$ give rise naturally to a differential operator
$P^0=P_1^0\cup P_2^0$ on $M:=\bigl(M_1\setminus (Y\times (0,c))\bigr)
\cup_{Y\times \{0\}} \bigl(M_2\setminus (Y\times (-c,0))\bigr)$
resp. $V=V_1+V_2\in \Opc^1(M,E)$, where $E$ is the bundle obtained
by gluing the bundles $E_1$ and $E_2$ in the obvious way. Note that
due to \Eqref{eq:EOG2} the operators $V_1, V_2$ extend to $M$ in
a natural way.

\begin{figure}
 \includegraphics[width=10cm]{./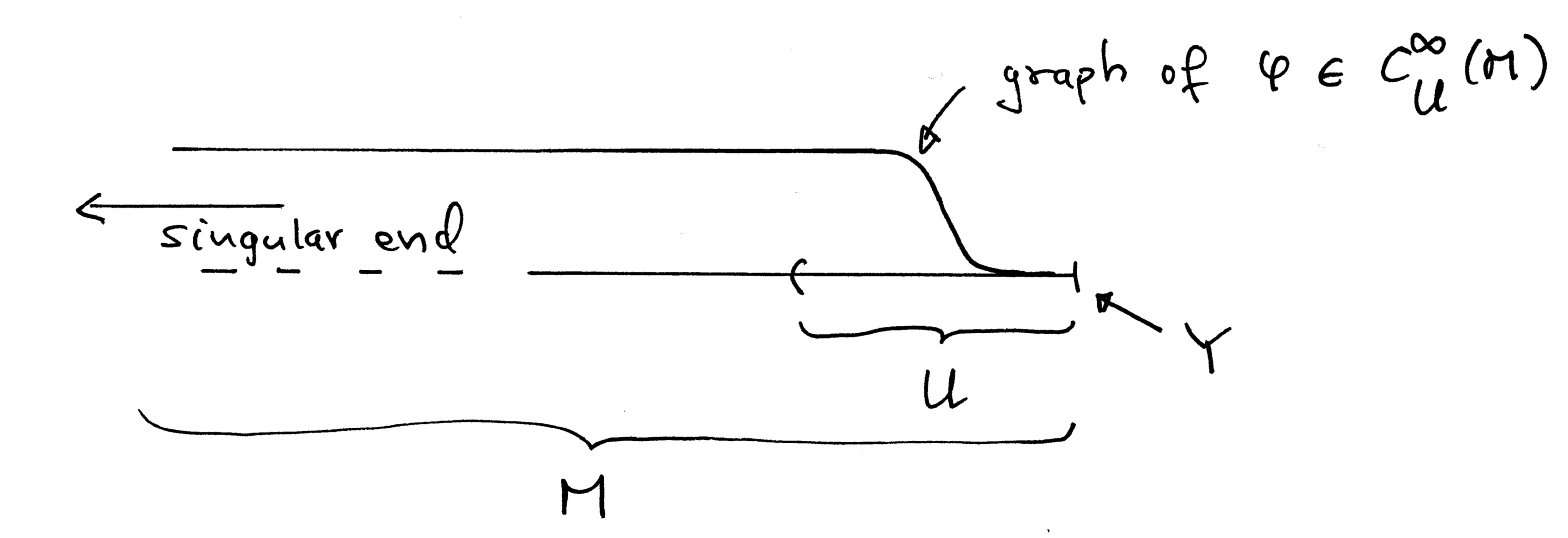} 
 \caption{\label{fig:cinfU}
 Schematic sketch of a function in $\CU{M}$. The line indicates
 the manifold $M$, to the left there are the possible non-complete
 ends. On the right there is the collar $U$.
}
\end{figure} 
\begin{dfn}\label{def:EOG4} By $\CU{M_j}$ we denote the space
 of those smooth functions $\varphi\in\cinf{M_j}$ such that
 $\varphi$ is constant in a neighborhood of $M_j\setminus U$
 and $\varphi\equiv 0$ in a neighborhood of $\pl \ovl{M_j}$,
 \cf Figure \ref{fig:cinfU}.
\end{dfn}
A function $\varphi\in\CU{M_j}$ extends by $0$ to a smooth
function on $M$.

\begin{prop}\label{p:EOG5} Let $P_j, j=1,2$, be closed symmetric
 extensions of $P_j^0$ which are bounded below and for which
 \begin{equation}\label{eq:EOG7}
  \varphi \dom(P_j^*) \subset\dom(P_j), \quad\text{for all } \varphi\in
  \CU{M_j}.
 \end{equation}
 Put for a fixed pair of functions $\varphi_j\in \CU{M_j}, j=1,2$
 \begin{equation}\label{eq:EOG8}
  \begin{split}
 \dom(P):=&\bigsetdef{ f\in \dom(P_{\max}^0)}{ \varphi_j f \in \dom(P_j), \;
 j=1,2 }\\
         =&H^2_{\comp}(U,E) +\varphi_1 \dom(P_1)+ \varphi_2 \dom(P_2).
 \end{split}
 \end{equation}
$\dom(P)$ is indeed independent of the particular choice of $\varphi_j$
and the operator $P$ which is defined by restricting $P_{\max}^0=(P^0)^*$ to
$\dom(P)$ is selfadjoint and bounded below. $V$ is $P$--bounded with
arbitrarily small bound and hence $P+V$ is selfadjoint and bounded below
as well.

Furthermore, if for fixed $j\in\{1,2\}$ we have $\varphi, \psi\in \CU{M_j}$
satisfying \Eqref{eq:EOG7} then $\varphi e^{-t(P_j+V)}\psi - \varphi
e^{-t(P+V)}\psi$ is trace class and its trace norm is $O(t^\infty)$ as $t\to
0+$.
\end{prop}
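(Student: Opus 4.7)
My first task is to show that the two characterizations in \Eqref{eq:EOG8} coincide and are independent of $\varphi_j$. For $f$ in the first set I would decompose $f = \varphi_1 f + \varphi_2 f + (1 - \varphi_1 - \varphi_2) f$; interior elliptic regularity for $P_{\max}^0$ places the last, compactly-supported summand in $H^2_{\comp}(U,E) \subset \dom(P_j)$. Independence of the choice of $\varphi_j$ then follows from the same estimate applied to $(\varphi_j - \tilde\varphi_j) f \in H^2_{\comp}(U,E)$.

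\textbf{Selfadjointness and semiboundedness.} For symmetry of $P$, I would use the decomposition: the diagonal pairings are handled by symmetry of each $P_j$, cross-terms between $\varphi_1$ and $\varphi_2$ vanish because their supports in $M$ meet only at the gluing hypersurface, and the $H^2_{\comp}(U,E)$-summand lies in $\dom(P_{\min}^0)$. For $\dom(P^*) \subset \dom(P)$, take $g \in \dom(P^*) \subset \dom(P_{\max}^0)$. Testing against $\varphi_j f_j \in \dom(P)$ for arbitrary $f_j \in \dom(P_j)$ and using $P^0(\varphi_j f_j) = \varphi_j P_j f_j + [P^0,\varphi_j] f_j$ yields
\[
\langle P_j f_j, \varphi_j g\rangle = \langle f_j,\, \varphi_j P^* g - [P^0,\varphi_j]^* g\rangle,
\]
where $[P^0,\varphi_j]^* g \in L^2$ by interior regularity of $g$ and compactness of $\supp d\varphi_j$. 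Hence $\varphi_j g \in \dom(P_j^*)$. Picking $\tilde\varphi_j \in \CU{M_j}$ with $\tilde\varphi_j \equiv 1$ on $\supp \varphi_j$, the non-interaction hypothesis \Eqref{eq:EOG7} gives $\varphi_j g = \tilde\varphi_j (\varphi_j g) \in \dom(P_j)$, so $g \in \dom(P)$. Semiboundedness of $P$ follows from that of the $P_j$ via the decomposition; $V$ is $P$-bounded with arbitrarily small bound exactly as in Subsection \ref{ss:EOGSA}, and Kato--Rellich then delivers selfadjointness and semiboundedness of $P + V$.

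\textbf{Heat-kernel comparison.} For the final assertion I would pick $\chi \in \CU{M_j}$ with $\chi \equiv 1$ on $\supp\varphi \cup \supp\psi$ and $\supp d\chi \cap \cK = \emptyset$. Because $P^0 = P_j^0$ on $\supp\chi$ and $\chi\psi = \psi$, differentiating $s \mapsto e^{-(t-s)(P+V)} \chi e^{-s(P_j+V)}\psi$ and integrating over $[0,t]$ produce the Duhamel identity
\[
\varphi e^{-t(P+V)}\psi - \varphi e^{-t(P_j+V)}\psi = -\int_0^t \varphi e^{-(t-s)(P+V)} [P^0,\chi] e^{-s(P_j+V)}\psi\, ds.
\]
The commutator $[P^0,\chi] \in \Opc^1$ is supported in $\supp d\chi$, which is compact, disjoint from $\cK$, and disjoint from $\supp\varphi$. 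Inserting a compactly supported plateau cutoff equal to $1$ on $\supp d\chi$ and disjoint from $\supp\varphi$, the bootstrap technique of Propositions \ref{p:EOG2} and \ref{p:EOG3} gives $\|\varphi e^{-(t-s)(P+V)}[P^0,\chi]\|_{\tr} = O((t-s)^\infty)$; combining with the uniform $L^2$ bound on $e^{-s(P_j+V)}\psi$ and integrating yields the claimed $O(t^\infty)$ trace-norm estimate. The hardest step is the selfadjointness: the duality pairing naturally lands in $\dom(P_j^*)$, and \Eqref{eq:EOG7} must be invoked with an auxiliary cutoff engulfing $\supp\varphi_j$ to upgrade to $\dom(P_j)$; the heat-kernel comparison itself is a routine, if notationally delicate, application of the commutator calculus already developed in this section.
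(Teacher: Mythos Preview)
Your proof is correct and follows essentially the same route as the paper's. The domain and selfadjointness arguments match in substance: test against $\dom(P_j)$, land in $\dom(P_j^*)$, then upgrade via \Eqref{eq:EOG7} using an engulfing cutoff in $\CU{M_j}$. For the heat-kernel comparison the paper organizes the Duhamel step slightly differently: it picks $\chi\equiv 1$ only in a neighborhood of $\supp\psi$ (with $\chi-\psi\in\cinfz{M_j}$), shows $K_t:=\chi e^{-t(P_j+V_j)}\psi-\chi e^{-t(P+V)}\psi$ is $O(t^\infty)$ in trace norm, and then splits $\varphi=\chi\varphi+(1-\chi)\varphi$, handling the second summand directly by Proposition~\ref{p:EOG3} since $\supp(1-\chi)\cap\supp\psi=\emptyset$. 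Your single mixed Duhamel with $\chi\equiv 1$ on $\supp\varphi\cup\supp\psi$ avoids this split at the cost of needing $\|\varphi e^{-(t-s)(P+V)}[P^0,\chi]\|_{\tr}=O((t-s)^\infty)$, which indeed follows by one more application of the same Duhamel/commutator mechanism (since $\supp d\varphi$ and $\supp d\chi$ are compact and disjoint, Proposition~\ref{p:EOG2} applies to $[P^0,\varphi]e^{-s(P+V)}[P^0,\chi]$). Either packaging reduces to the estimates of Propositions~\ref{p:EOG2} and~\ref{p:EOG3}.
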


\begin{remark} 1. Note that it is not assumed that $\varphi e^{-t(P_j+V)}\psi$ or $\varphi
e^{-t(P+V)}\psi$ is of trace class individually!

2. \Eqref{eq:EOG7} says that the ``boundary conditions" at the exits of $M_1$
and $M_2$ are separated. Let us illustrate this by an example:
let $M_1=(-1,1/2), M_2=(-1/2,1), U=(-1/2,1/2), M=(-1,1)$ and
$P_j^0=-\frac{d^2}{dx^2}=\Delta, j=1,2,$ the Laplacian on functions.
Let $P_1^{\textup{per}}$ be the Laplacian $\Delta$ on $M_1$ with
periodic boundary conditions. These boundary conditions are not separated
and indeed for $\varphi\in \cinf{-1,1/2}$ with $\varphi(x)=1$ for $x\le -1/4$
and $\varphi(x)=0$ for $x\ge 1/4$ the space $\varphi\dom(P_1^{\textup{per}})$
equals $\varphi H^2[-1,1/2]$ and this is not contained in
$\dom(P_1^{\textup{per}})$.

However, for any pair of selfadjoint extensions $P_j$ of $P_j^0, j=1,2$ with separated
boundary conditions at the ends of the intervals $M_j$ one has
$\varphi\dom(P_j)\subset\dom(P_j)$, \ie the condition \Eqref{eq:EOG7} is satisfied
and Proposition \plref{p:EOG5} applies to this pair.
\end{remark}

\begin{proof} Since $H^2_{\comp}(U,E)\subset \dom(P_{j,\min}^0)$ the second
 equality in \Eqref{eq:EOG8}, the symmetry of $P$ and the independence of
 $\dom(P)$ of the particular choice of $\varphi_j$ are easy consequences
 of \Eqref{eq:EOG7}.

 To prove selfadjointness let $f\in \dom(P^*)$. We claim that for
 $\varphi_1\in\CU{M_1}$ we have $\varphi_1 f\in \dom(P_1^*)$. Indeed for
 $g\in\dom(P_1)$ we have
 \begin{equation}
    \scalar{\varphi_1 f}{P_1 g} = \scalar{f}{\ovl{\varphi_1} P_1 g}
     = \scalar{f}{[\ovl{\varphi_1},P_1^0]g} + \scalar{f}{P \ovl{\varphi_1}
    g}.
 \end{equation}
Since $\supp d\varphi_1\subset U$ is compact and since
$[\ovl{\varphi_1},P_1^0]$ is a compactly supported first order differential
operator on $U$ we find
\begin{equation}
   \ldots = \scalar{[P_1^0,\varphi_1] f+ \varphi_1 P^* f}{g}
\end{equation}
proving $\varphi_1 f\in\dom(P_1^*)$. In view of \Eqref{eq:EOG7} we see, by
choosing another plateau function $\psi\in\CU{M_1}$ with
$\psi\varphi_1=\varphi_1$ that $\varphi_1 f\in \dom(P_1)$.
In the same way we conclude $\varphi_2 f\in\dom(P_2)$ for
$\varphi_2\in\CU{M_2}$ and thus $f\in\dom(P)$. 

To prove the trace class property and the trace estimate we choose
another plateau function $\chi\in\CU{M_j}$ such that $\chi\equiv 1$
in a neighborhood of $\supp\psi$ with $\chi-\psi\in \cinfz{M_j}$;
hence $\chi$ also satisfies \Eqref{eq:EOG7}.

Consider first $K_t:= \chi e^{-t(P_j+V_j)} \psi - \chi e^{-t(P+V)}\psi$.
$K_{t=0}=0$ and
\begin{equation}\label{eq:EOG11}   
  \begin{split}
   \bigl(\pl_t + P +V\bigr) K_t = [P_j^0,\chi] e^{-t(P_j+V_j)}\psi -
   [P^0,\chi] e^{-t(P+V)}\psi.
  \end{split}
\end{equation}
Here we have used that multiplication by $\chi$ commutes with $V, V_j$,\cf
\Eqref{eq:EOG2}. Propositions \plref{p:EOG2} and \plref{p:EOG3} now imply
that $K_t$ is trace class for $t>0$ and that $\|K_t\|_{\tr}=O(t^\infty)$
as $t\to 0+$.
Consequently 
\[
   \| \chi \varphi e^{-t(P_j+V)}\psi - \chi \varphi e^{-t(P+V)}\psi\|_{\tr}
   \le \|\varphi\|_\infty \, \|K_t\|_{\tr}=O(t^\infty).
\]
To $(1-\chi) \varphi e^{-t(P_j+V)}\psi - (1-\chi) \varphi e^{-t(P+V)}\psi$
we can apply Proposition \plref{p:EOG3} since $(\supp \psi) \cap \supp
(1-\chi)=\emptyset$ and the proof is complete.
\end{proof}

Finally, we discuss heat expansions. Under the assumptions of Proposition
\ref{p:EOG5} assume that $P_j+V_j$ has \emph{discrete dimension spectrum outside
$U$}. By this we understand that for $\varphi\in\CU{M_j}$ the operator
$\varphi e^{-t(P_j+V_j)}$ is trace class and that there is an asymptotic
expansion of the form \Eqref{eq:HeatExpansion} with $a_{\ga k}=a_{\ga
k}(\varphi)$. Then

\begin{cor}\label{cor:EOG6} Under the additional assumption of discrete
 dimension spectrum for $P_j+V_j$ outside $U$ the operator $P+V$
 has discrete dimension spectrum and for any $\varphi\in\CU{M_1}$ we
 have
 \begin{equation}\label{eq:EOG20}
  \Tr\bigl(e^{-t(P+V)}\bigr)=\Tr\bigl(\varphi e^{-t(P_1+V_1)}\bigr)+
  \Tr\bigl((1-\varphi)e^{-t(P_2+V_2)}\bigr)+O(t^\infty)
\end{equation} 
as $t\to 0+$.
\end{cor}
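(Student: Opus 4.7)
The plan is to split $\Tr(e^{-t(P+V)})$ into an $M_1$-contribution and an $M_2$-contribution by a cut-off that separates the two non-compact ends of $M$, and then to reduce each contribution to the corresponding trace on $M_j$ by the gluing estimate of Proposition~\ref{p:EOG5}. The given $\varphi\in\CU{M_1}$ extends by zero to a smooth function on $M$; set $\psi:=1-\varphi$ on $M$, which is supported on the $M_2$-side and, extended by zero across the cut, lies in $\CU{M_2}$. Choose auxiliary plateau functions $\tilde\varphi\in\CU{M_1}$ and $\tilde\psi\in\CU{M_2}$ satisfying \eqref{eq:EOG7}, with $\tilde\varphi\equiv 1$ on $\supp\varphi$ and $\tilde\psi\equiv 1$ on $\supp\psi$.

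For the $M_1$-piece decompose
\[
\varphi\,e^{-t(P+V)}=\varphi\,e^{-t(P+V)}\,\tilde\varphi+\varphi\,e^{-t(P+V)}(1-\tilde\varphi).
\]
The assumption of discrete dimension spectrum of $P_1+V_1$ outside $U$ gives that $\varphi\,e^{-t(P_1+V_1)}$ is trace class, and Proposition~\ref{p:EOG5} yields
\[
\bigl\|\varphi\,e^{-t(P+V)}\tilde\varphi-\varphi\,e^{-t(P_1+V_1)}\tilde\varphi\bigr\|_{\tr}=O(t^\infty),
\]
so $\varphi\,e^{-t(P+V)}\tilde\varphi$ is trace class, and cyclicity together with $\tilde\varphi\varphi=\varphi$ gives
\[
\Tr\bigl(\varphi\,e^{-t(P+V)}\tilde\varphi\bigr)=\Tr\bigl(\varphi\,e^{-t(P_1+V_1)}\bigr)+O(t^\infty).
\]
The remaining summand $\varphi\,e^{-t(P+V)}(1-\tilde\varphi)$ has disjointly supported factors with compactly supported differentials, so the sharper clause of Proposition~\ref{p:EOG3} gives trace norm $O(t^\infty)$. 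Adding the two:
\[
\Tr\bigl(\varphi\,e^{-t(P+V)}\bigr)=\Tr\bigl(\varphi\,e^{-t(P_1+V_1)}\bigr)+O(t^\infty).
\]
The symmetric argument with $\psi=1-\varphi$ and $\tilde\psi\in\CU{M_2}$ yields the analogous identity
\[
\Tr\bigl((1-\varphi)\,e^{-t(P+V)}\bigr)=\Tr\bigl((1-\varphi)\,e^{-t(P_2+V_2)}\bigr)+O(t^\infty).
\]

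Writing $e^{-t(P+V)}=\varphi\,e^{-t(P+V)}+(1-\varphi)\,e^{-t(P+V)}$, both summands being trace class, and adding the two identities produces \eqref{eq:EOG20}. The full expansion \eqref{eq:HeatExpansion} for $\Tr(e^{-t(P+V)})$, and hence the discrete dimension spectrum of $P+V$, is inherited term by term from the analogous expansions for $\Tr(\varphi\,e^{-t(P_1+V_1)})$ and $\Tr((1-\varphi)\,e^{-t(P_2+V_2)})$ provided by the standing assumption. The only nontrivial analytic content, the $O(t^\infty)$ trace-class estimate for the difference between the genuine heat operator on $M$ and its localized versions on the $M_j$, has already been supplied by Propositions~\ref{p:EOG3} and \ref{p:EOG5}; the remaining work is the (somewhat fiddly) bookkeeping required to choose the auxiliary cut-offs $\tilde\varphi,\tilde\psi$ in the correct $\CU{M_j}$ so that they dominate $\varphi,\psi$ respectively, satisfy the domain-invariance condition \eqref{eq:EOG7}, and have differentials disjoint from~$\cK$. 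This organization of cut-offs is the main (essentially combinatorial) hurdle.
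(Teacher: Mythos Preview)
Your proposal is correct and follows the same approach as the paper, which simply states that \eqref{eq:EOG20} ``is immediate from Proposition~\ref{p:EOG5} and the discrete dimension spectrum assumption.'' Your argument is a faithful unpacking of that immediacy: the auxiliary plateau functions $\tilde\varphi,\tilde\psi$ are exactly what is needed to apply Proposition~\ref{p:EOG5} (which requires \emph{two} cut-offs in $\CU{M_j}$) and to handle the mismatch between the Hilbert spaces $L^2(M,E)$ and $L^2(M_j,E)$ via cyclicity of the trace, while Proposition~\ref{p:EOG3} disposes of the off-diagonal remainder. The closing meta-commentary about the ``combinatorial hurdle'' is unnecessary in a written proof, but the mathematics is sound.
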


\begin{proof} \Eqref{eq:EOG20} is immediate from Proposition \ref{p:EOG5} and
 the discrete dimension spectrum assumption.\end{proof}
 
 We add, however, a little
 more explanation since the term ``discrete dimension spectrum outside $U$''
 might lead to some confusion: since $\cK\cap U=\emptyset$ (\textit{cf.}
 \Eqref{eq:EOG2} and the second paragraph of this Subsection \ref{ss:OG})
 for $f\in\ginfz{U,E}$ we have $(P+V)f=Pf$. The classical interior
 parametric elliptic calculus (\eg \cite{Shu:POS}) then implies that
 for $\varphi\in \cinfz{U}$ there is an asymptotic expansion
 \begin{equation}
  \Trbig{\varphi e^{-t(P+V)}}\sim_{t\searrow 0} \sum_{j\ge 0} a_j(P,\varphi)
  \, t^{j-m/2},
 \end{equation} 
where $a_j(P,\varphi)=\int_M \tilde a_j(x,P) \varphi(x) dx$ and $\tilde
a_j(x,P)$ are the local heat invariants of $P$. 
Thus over any compact subset in the \emph{interior} of $M\setminus \cK$
the discrete dimension spectrum assumption follows from standard
elliptic theory and hence is a non-issue. Rather it is a condition
on the behavior of $P$ on non-compact ``ends'' and a condition on $V$
over $\cK$.

\subsection{Ideal boundary conditions with discrete dimension
spectrum}\label{ss:IBC}
The remarks of the previous Subsection extend to ideal boundary conditions
of elliptic complexes in a straightforward fashion. Let $X$ be a Riemannian
manifold which is the interior of a Riemannian manifold $\ovl{X}$ with
compact boundary Y, and let $U=(-c,0)\times Y$ be a collar of the boundary. 
Since $\ovl{X}$ is allowed to be non-compact it is not excluded that away from $U$ there
are ``ends'' of $\ovl{X}$ which can be completed by adding another boundary
component, see Figure \ref{fig:singmani}.

As an example which illustrates what can happen consider a compact manifold
$Z$ with boundary, where $\pl Z=Y_1\cup Y_2 \cup Y_3$ consists of the disjoint
union of three compact closed manifolds $Y_j, j=1,2,3$. Attach a cone
$C(Y_3)=Y_3\times (0,1)$ with metric $dr^2+r^2 g_{Y_3}$ to $Y_3$ (and smooth
it out near $Y_3\times \{1\}$). Then put $X:=\bigl(Z\setminus {(Y_1\cup
Y_2)}\bigr) \cup_{Y_3} C(Y_3)$ and $\ovl{X}:= \bigl(Z\setminus {Y_2}\bigr)\cup_{Y_3} C(Y_3)$.
Then $Y_1$ plays the role of $Y$ above, but $\overline{X}$ is not compact.
Cf. Figure \ref{fig:singmani}.

When introducing closed extensions (viz.~boundary conditions) for elliptic
operators on $X$ it is important that the boundary conditions at $Y_1$ and
$Y_2$ resp. the cone do \emph{not} interact in order to ensure \Eqref{eq:EOG7}
to hold.

Leaving this example behind let $(\Gamma_c^\infty(E),d)$ be an elliptic complex
and let $(\sD,D)$ be an ideal boundary condition for $(\Gamma_c^\infty(E),d)$. That is
a Hilbert complex such that $D_j$ are closed extensions of $d_j$.

We say that the ideal boundary condition
$(\sD,D)$ has discrete dimension spectrum outside $U$ if the Laplacians
$\Delta_j=D_j^*D_j+D_{j-1}D_{j-1}^*$ have discrete dimension spectrum outside $U$, \cf
the paragraph before Corollary \ref{cor:EOG6}.
Then Proposition \ref{p:EOG5} and Corollary \ref{cor:EOG6} hold for the
Laplacians.

More concretely, let $X, Y$ be as before and let $(F,\nabla)$ be a flat bundle
over $\ovl{X}$. Assume that we are given an ideal boundary condition $(\sD,D)$ of the de Rham complex
$(\Omega^\bullet(X;F),d)$ with values in the flat bundle $F$ with discrete dimension
spectrum over the open set $X\setminus U$, $U=(-c,0)\times Y$.
Fix a smooth function $\varphi\in\cinf{-c,0}$ which is $1$ near $-c$
and $0$ near $0$ and extend it to a smooth function on $X$ in the obvious way.

We then define the absolute and relative boundary conditions at $Y$ as follows.
\begin{equation}\label{eq:AbsRelBoundaryCondition}
    \begin{split}
        \sD^j(X;F)&:=\varphi \dom(D_j)+(1-\varphi)\dom(d_{j,\max}),\\
        \sD^j(X,Y;F)&:=\varphi \dom(D_j)+(1-\varphi)\dom(d_{j,\min}).
    \end{split}
\end{equation}
Since the Laplacians of the maximal and minimal ideal boundary condition
are near $Y$ realizations of local elliptic boundary conditions
(\textit{cf., e.g.},
\cite[Sec.~2.7]{Gil:ITH}) it follows from Prop. \ref{p:EOG5} and Corollary \ref{cor:EOG6} 
applied to $M_1=X, M_2=Y\times (-c,0), U=Y\times (-c,-c/2)$ 
that the Hilbert complexes $(\sD(X;F),d)$ and $(\sD(X,Y;F),d)$
are Hilbert complexes with discrete dimension spectrum.

\section{Vishik's moving boundary conditions}\label{s:VMB}    
\subsection{Standing Assumptions}\label{ss:VMBSA}
We discuss here Vishik's \cite{Vis:GRS} moving boundary conditions for the de Rham complex in
our slightly more general setting.
Let $X$ be a Riemannian manifold (not necessarily compact or complete!), see
Figure \ref{fig:singmani}. Furthermore, let $(F,\nabla)$
be a flat bundle with a (not necessarily flat) Hermitian metric $h^F$. We assume
furthermore, that $X$ contains a compact separating hypersurface $Y\subset X $
such that in a collar neighborhood $W=(-c,c)\times Y$ all structures are product.
In particular we assume that $\nabla^F$ is in temporal gauge on $W$, that is
$\nabla^F\restriction W=\pi^* \tilde \nabla^F$ for a flat connection
$\tilde\nabla^F$ on $F\restriction Y$,
$\pi$ denotes the natural projection map $W\to Y$. In other words $X$ is obtained
by gluing two manifolds with boundary $X^\pm$ along their common boundary $Y$ where
all structures are product near $Y$, \cf Figure \ref{fig:gluing}.

We make the fundamental assumption that
\begin{equation}\label{eq:VMBSA1}\parbox{\quoteboxwidth}{
we are given ideal boundary conditions $(\sD^\pm,D^\pm)$ of the twisted de Rham
complexes $(\Omega^\bullet(X^{\circ,\pm};F),d)$ which have discrete dimension
spectrum over $U^\pm:=X^\pm\setminus W$. We put $\Xcut:=X^-\coprod X^+$.}
\end{equation}

\subsection{Some exact sequences and the main deformation result}\label{ss:Main}

As explained in Subsection \ref{ss:IBC} we therefore have the following Hilbert complexes with
discrete dimension spectrum: $\sD^\bullet(X^\pm;F)$ (absolute boundary condition at $Y$), 
$\sD^\bullet(X^\pm,Y;F)$ (relative boundary condition at $Y$), $\sD^\bullet(X;F)$ (continuous transmission
condition at $Y$).
By construction we have the following exact sequences of Hilbert complexes
\begin{equation}\label{eq:ExactSeq1}
\xymatrix{
    0 \ar[r]   &\leftrelcomplex \ar@{^{(}->}[r]^-{\ga_-}  &\totalcomplex \ar[r]^\beta &\rightabscomplex \ar[r] &  0,
}
\end{equation}
\begin{equation}\label{eq:ExactSeq2}
 \xymatrix{
    0 \ar[r]   &\genrelcomplex \ar@{^{(}->}[r]^-{\gamma_\pm}  &\genabscomplex \ar[r]^{i_\pm^*} &\bdycomplex \ar[r] &  0,
}
\end{equation}

\begin{equation}\label{eq:ExactSeq3}
 \xymatrix{
    0 \ar[r]   &\leftrelcomplex \oplus \rightrelcomplex \ar@{^{(}->}[r]^-{\ga_++\ga_-} &\totalcomplex \ar[r]^{r} &\bdycomplex \ar[r] &  0.
}
\end{equation}
Here $\ga_\pm$ are extension by $0$, $\gb$ is pullback (i.e. restriction) to $X^+$,
$\gamma_\pm$ is the natural inclusion of the complex $\genrelcomplex$ with relative
boundary condition at $Y$ into the complex $\genabscomplex$ with absolute boundary
condition, and $i_\pm:Y\hookrightarrow X^\pm$ is the inclusion map. Finally
$r\go=\frac{\sqrt{2}}{2}(i_+^* \go + i_-^*\go)=\sqrt{2} i_\pm^*\go$ for $\go\in\sD^\bul(X;F)$.

It is a consequence of standard Trace Theorems for Sobolev spaces that
$i_\pm^*: \genabscomplex \to \bdycomplex$ is well--defined,
see \eg \cite[Sec.~1]{Paq:PMP}, \cite{LioMag:NHB}, \cite{BruLes:BVPI}.
To save some space we have omitted the operator $D$ from the notation in all
the complexes in \Eqref{eq:ExactSeq1}--\Eqref{eq:ExactSeq3}. Clearly, the
complex differential is always the exterior derivative on the indicated
domains.

Each of the complexes \eqref{eq:ExactSeq1}, \eqref{eq:ExactSeq2}, \eqref{eq:ExactSeq3}
induces a long exact sequence in cohomology. We abbreviate these long exact
cohomology sequences by $\longexactcohone$, $\longexactcohtwo$, 
$\longexactcohthree$, resp. 
The long exact cohomology sequences of the complexes 
\eqref{eq:ExactSeq1}, \eqref{eq:ExactSeq2}, \eqref{eq:ExactSeq3} are exact sequences
of finite-dimensional \emph{Hilbert spaces} and therefore their torsion
$\tau(\sH(\ldots))$ is defined, \cf \Eqref{eq:OMZ14}. 
The Euler characteristics, \cf \Eqref{eq:EulerCharacteristic}, of the
complexes in \Eqref{eq:ExactSeq1}--\eqref{eq:ExactSeq3} are denoted
by $\chi(X^\pm,Y;F), \chi(X^\pm;F), \chi(X;F), \chi(Y;F)$ etc.

Next we introduce parametrized versions of the exact sequences
\Eqref{eq:ExactSeq1} and \eqref{eq:ExactSeq3}. The idea
is due to Vishik \cite{Vis:GRS} who applied it to give a new proof of the Ray--Singer
conjecture for compact smooth manifolds with boundary.
Namely, for 
$\theta\in\R$ consider the following ideal boundary condition of the twisted
de Rham complex on the disjoint union $\Xcut=X^-\coprod X^+$:
\begin{multline}\label{eq:1007096}
    \sD_\TT^j(X;F)\\
    :=\bigl\{(\go_1,\go_2)\in \sD^j(X^-;F)\oplus \sD^j(X^+;F)\bigm|
	    \cost\cdot i_-^*\go_1=\sint\cdot i_+^* \go_2\bigr\}.    
\end{multline}
      
We will see that for each real $\TT$ the complex $(\sD_\TT^\bul(X;F),d)$
is indeed a Hilbert complex with discrete dimension spectrum. In fact near
$Y$ it is a realization of a local elliptic boundary value problem for de Rham
complex on the manifold $\Xcut$; and away from $Y$ we may apply Corollary
\ref{cor:EOG6} and our assumption \Eqref{eq:VMBSA1} that the Hilbert complexes
$(\sD^\pm,D^\pm)$ have discrete dimension spectrum over $X^\pm\setminus W$.

Furthermore, for $\TT=0$ we have $\sD_\TT^\bul(X;F)=\sD^\bul(X^-,Y;F)\oplus \sD^\bul(X^+;F)$,
and for $\TT=\pi/4$ we see that (cf. \cite[Prop.~1.1 p.~16]{Vis:GRS}) the total Gau{\ss}--Bonnet
operators $d+d^*$ of the complexes $\sD_{\pi/4}(X;F)$ and $\sD(X;F)$ coincide.
Hence the family of complexes $(\sD_\TT^\bul(X;F), d^\theta)$ interpolates in a sense
between the direct sum $\sD^\bul(X^-,Y;F)\oplus \sD^\bul(X^+;F)$ and
the complex $\sD^\bullet(X;F)$ on the manifold $X$. 

The parametrized versions of \eqref{eq:ExactSeq1}, \eqref{eq:ExactSeq3} are
then
\begin{equation}\label{eq:ExactSeq1p}
\xymatrix{
    0 \ar[r]   &\leftrelcomplex \ar@{^{(}->}[r]^-{\ga_\TT}  &\thetacomplex \ar[r]^{\beta_\TT}
     &\rightabscomplex \ar[r] &  0,
}
\end{equation}
\begin{equation}\label{eq:ExactSeq3p}
 \xymatrix{
    0 \ar[r]   &\leftrelcomplex \oplus \rightrelcomplex \ar@{^{(}->}[r]^-{\gamma_++\gamma_-} 
      &\thetacomplex \ar[r]^{r_\TT} &\bdycomplex \ar[r] &  0,
}
\end{equation}
where $\ga_\TT\go=(\go,0)$ is extension by $0$, $\gb_\TT(\go_1,\go_2)=\go_2$ is 
restriction to $X^+$, $\gamma_+\oplus \gamma_-(\go_1,\go_2)=(\go_1,\go_2)$ is
inclusion and $r_\TT(\go_1,\go_2)=\sint\cdot i_-^*\go_1+\cost\cdot i_+^*\go_2.$
Let $\longexactcohonetheta, \longexactcohthreetheta$ be the corresponding long exact
cohomology sequences. 

We denote the cohomology groups of the complex $\thetacomplex$
by $H_\TT^j(X;F)$; the corresponding space of harmonic forms
will be denoted by $\hat H_\TT^j(X;F)$. 
For the next result we need some more notation. Let $\sH$ be a Hilbert space
and let $T:\sH\to\sH$ be a bounded linear operator. For a finite-dimensional
subspace $V\subset \sH$ we write $\Tr(T\restriction V)$ for $\Tr(P_V T P_V)$
where $P_V$ is the orthogonal projection onto $V$. If $e_j$, $j,\ldots,n$,
is an orthonormal basis of $V$ then
\begin{equation}\label{eq:1007097}
     \Tr(T\restriction V)=\sum_{j=1}^n \scalar{T e_j}{e_j}.    
\end{equation}
We will apply this to $\gb_\TT$ on the space $H_\TT^j(X;F)$. If $e_j$,
$j,\ldots,n$,
is an orthonormal basis of $\hat H^j_\TT(X;F)$
then
\begin{equation}\label{eq:1007098}
    \trbetatheta = \sum_{j=1}^n \| e_j\restriction X^+\|_{X^+}^2
                 = \sum_{j=1}^n \int_{X^+} e_j\wedge * e_j.
\end{equation}

After these preparations we are able to state our main technical result. It is
inspired by Lemma 2.2 and Section 2.6 in \cite{Vis:GRS}.
\begin{theorem}\label{thm:Theorem2}
The functions $\TT\mapsto \log T(\thetacomplex)$,
$\log \tau(\longexactcohonetheta)$, $\log \tau(\longexactcohthreetheta)$
are differentiable for $0<\TT<\pi/2$ . Moreover, for $0<\TT<\pi/2$
\begin{align}
\frac{d}{d\TT}&\log T(\thetacomplex) =\nonumber\\
       & =\frac{2}{\sin 2\TT}\Bigl[- \altsum \trbetatheta + \chi(X^+;F)\Bigr]-
       \tan\TT\cdot \chi(Y;F),\label{eq:Theorem2Eq1}\\
\frac{d}{d\TT}&\log \tau\bigl(\longexactcohonetheta\bigr) =\nonumber\\
      &  =\frac{2}{\sin 2\TT}\Bigl[- \altsum \trbetatheta + \chi(X^+;F)\Bigr], \label{eq:Theorem2Eq2}\\
\frac{d}{d\TT}&\log \tau\bigl(\longexactcohthreetheta\bigr) =\frac{d}{d\TT}\log T(\thetacomplex).
\label{eq:Theorem2Eq3}
\end{align}
Furthermore,
\begin{equation}\label{eq:1007111}
\TT\mapsto \log T(\thetacomplex)-\log \tau (\sH_\TT)
\end{equation}
is differentiable for $0\le \TT <\pi/2$. Here, $\sH_\TT$
stands for either $\longexactcohonetheta$ or $\longexactcohthreetheta$.
\end{theorem}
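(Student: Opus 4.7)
The plan is to invoke Proposition \ref{p:GenDiffResult} for the analytic torsion $\log T(\thetacomplex)$, after first trivializing the $\TT$-dependence of the domain in \eqref{eq:1007096}. The matching condition $\cost\cdot i_-^*\go_1=\sint\cdot i_+^* \go_2$ forces $\sD_\TT^\bullet$ to vary with $\TT$, which is incompatible with hypothesis (2) of that proposition. The main technical device is a Witten-type gauge transformation: I would construct a smooth family of bundle automorphisms $U_\TT$, supported in the collar $W=(-c,c)\times Y$ and acting near $Y$ by a rotation through angle $\TT$ mixing the $i_\pm^*$ components, such that conjugation gives an isomorphism $U_\TT^{-1}\colon\sD_\TT^\bullet\xrightarrow{\cong}\sD_{\pi/4}^\bullet$. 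Setting $\widetilde D^\TT := U_\TT^{-1} D^\TT U_\TT$ yields a one-parameter family of de Rham-type operators with \emph{fixed} domain; by the product structure on $W$, $\frac{d}{d\TT}\widetilde\Delta^\TT$ is a first-order operator compactly supported in $W$, hence lies in $\Opc^1(M,E)$ (Subsection \ref{ss:EOGSA}) and plays the role of $P$ in Prop.~\ref{p:GenDiffResult}.

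The hypotheses of Prop. \ref{p:GenDiffResult} are then checked directly. Hypothesis (2) is automatic. For (1), a Duhamel expansion together with the $\Opc^1$ mapping property \eqref{eq:EOG1} and standard Sobolev estimates gives \eqref{eq:106}. For (3) and (4) I apply Corollary \ref{cor:EOG6}: on $W$ the conjugated $\widetilde\Delta^\TT$ is a smoothly varying realization of a local elliptic boundary value problem whose heat trace expansion is uniform in $\TT$ and free of $t^0\log^k t$-terms by the standard parabolic boundary calculus, while off $W$ one uses the standing hypothesis \eqref{eq:VMBSA1} and Prop. \ref{p:EOG5}. Substituting into \eqref{eq:108} and using \eqref{eq:1007098} together with a Gauss--Bonnet computation in the product collar produces the three summands of \eqref{eq:Theorem2Eq1}: the harmonic contribution $\tfrac12\Tr(P\restriction\ker\widetilde\Delta^\TT)$ yields $-\tfrac{2}{\sin 2\TT}\altsum\trbetatheta$, while the local $a_{00}^\TT$ integral contributes $\tfrac{2}{\sin 2\TT}\chi(X^+;F)-\tan\TT\cdot\chi(Y;F)$.

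For \eqref{eq:Theorem2Eq2} and \eqref{eq:Theorem2Eq3} I switch to the algebraic side and use Milnor's formula (Prop. \ref{p:OMZ6} and Lemma \ref{l:OMZ5}) applied to the sequences $\longexactcohonetheta$ and $\longexactcohthreetheta$. The induced maps $\ga_{\TT,*},\gb_{\TT,*},\gd_{\TT,*}$ on the harmonic representatives $\hat H^j_\TT(X;F)$ depend smoothly on $\TT$ via the gauge $U_\TT$, and $\gb_{\TT,*}$ on harmonic forms is, after normalization, the restriction to $X^+$ whose Hermitian square is the bilinear form in \eqref{eq:1007098}; term-by-term differentiation of the Milnor formula and \eqref{eq:OMZ26} yield \eqref{eq:Theorem2Eq2}. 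No $\chi(Y;F)$-term arises because \eqref{eq:ExactSeq1p} does not involve boundary cohomology. For \eqref{eq:ExactSeq3p} the map $r_\TT$ carries explicit $\sint,\cost$ factors whose logarithmic derivative contributes precisely the extra $-\tan\TT\cdot\chi(Y;F)$ turning \eqref{eq:Theorem2Eq2} into \eqref{eq:Theorem2Eq1}, which is \eqref{eq:Theorem2Eq3}.

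The main obstacle is the boundary differentiability at $\TT=0$ in \eqref{eq:1007111}. Here the analytic derivative \eqref{eq:Theorem2Eq1} has $\tfrac{1}{\sin 2\TT}$- and $\tan\TT$-singularities that must cancel against matching singularities of $\tfrac{d}{d\TT}\log\tau(\sH_\TT)$. My plan is to write each connecting map $\ga_\TT,\gb_\TT,\gd_\TT$ (resp. $\gamma_\pm,r_\TT$) explicitly with its $(\sint,\cost)$-weights, so that the $\log\Det$ factors in the Milnor formula acquire explicit trigonometric parts. Differentiating and comparing with \eqref{eq:Theorem2Eq1}--\eqref{eq:Theorem2Eq2} term-by-term, the singular contributions cancel, leaving a smooth remainder on $[0,\pi/2)$; the reference value at $\TT=0$ is supplied by the splitting $\sD_0^\bullet=\sD^\bullet(X^-,Y;F)\oplus\sD^\bullet(X^+;F)$ together with the evaluation of $\log\tau(\sH_0)$, which together recover the torsion of the split complex.
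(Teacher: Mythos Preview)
Your overall strategy for \eqref{eq:Theorem2Eq1}--\eqref{eq:Theorem2Eq3} is the same as the paper's: gauge-transform to fixed domain, invoke Prop.~\ref{p:GenDiffResult}, and handle the cohomology-sequence torsions via Lemma~\ref{l:OMZ5} applied to the explicit chain isomorphism $\phi_{\TT,\TT'}(\go_1,\go_2)=(\go_1,\tfrac{\tan\TT}{\tan\TT'}\go_2)$ between $\sD_\TT$ and $\sD_{\TT'}$. One point you gloss over: the operator $P$ in Prop.~\ref{p:GenDiffResult} is \emph{not} $\tfrac{d}{d\TT}\tilde\Delta^\TT$. Naively differentiating gives $-t\,\Tr\bigl(\tfrac{d\tilde\Delta^\TT}{d\TT}e^{-t\tilde\Delta^\TT}\bigr)$, which is not yet of the form $t\tfrac{d}{dt}\Tr(Pe^{-t\Delta^\TT})$. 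The paper obtains $P=-\tfrac{4}{\sin 2\TT}\gb_\TT$ through an integration-by-parts/Stokes computation on the collar (using the boundary condition \eqref{eq:1007096} to convert the boundary integral into an $X^+$-integral), and only then can one read off both the harmonic contribution and the constant term $a_{00}^\TT$ via a comparison with the double $-X^+\coprod X^+$. Your ``Gauss--Bonnet computation in the product collar'' hides this step.

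The genuine gap is your treatment of \eqref{eq:1007111}. You propose to expand the Milnor determinants with explicit $(\sint,\cost)$-weights and observe that the trigonometric singularities cancel against those in \eqref{eq:Theorem2Eq1}. That argument presupposes that the individual pieces, in particular $\dim H^j_\TT(X;F)$ and $\Tr(\gb_\TT\restriction H^j_\TT(X;F))$, depend continuously on $\TT$ down to $\TT=0$. They need not: the cohomology dimensions can jump at $\TT=0$ (small nonzero eigenvalues of $\Delta^\TT$ may collide with zero), so the harmonic projector and hence $\log\tau(\sH_\TT)$ and $\log T(\thetacomplex)$ can each be discontinuous there. Cancellation of the $\tfrac{1}{\sin 2\TT}$-terms in the \emph{derivative} on $(0,\pi/2)$ does not by itself give continuity of the \emph{function} at $0$. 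The paper handles this by introducing the spectral projector $\Pi^p_\TT$ onto the small-eigenvalue space $H^p_{\TT,a}(X;F)=\bigoplus_{0\le\gl<a}\ker(\Delta^\TT_p-\gl)$, which \emph{is} smooth in $\TT$ across $0$; one then splits $\log T(\thetacomplex)=\log\tau(H^\bullet_{\TT,a})+\log T(\text{complement})$, with the second summand manifestly smooth at $0$, and applies Prop.~\ref{p:OMZ6} to the finite-dimensional short exact sequence $0\to H^*(X^-,Y;F)\to H^*_{\TT,a}(X;F)\to H^*(X^+;F)\to 0$ (exact for $\TT$ small by openness of surjectivity) to identify $\log\tau(H^\bullet_{\TT,a})-\log\tau(\sH_\TT)$ with a smooth correction term. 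Without this small-eigenvalue decomposition your argument for differentiability at $\TT=0$ is incomplete.
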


The proof of Theorem \ref{thm:Theorem2} will occupy the next Section \ref{s:GTP}.


\section{Gauge transforming the parametrized de Rham complex a la
Witten}\label{s:GTP}                                            
Consider the manifold $X$ as described in Section \ref{s:VMB}.
Recall that in the collar $W:=(-c,c)\times Y$ of $Y$ all structures
are assumed to be product. We introduce $W^{\cut}:= (-c,0]\times Y \coprod
[0,c)\times Y$. Furthermore, let $S: W^{\cut}\to W^{\cut}, (t,p)\mapsto (-t,p)$
be the reflexion map at $Y$. Finally, we introduce the map
\begin{equation}\label{eq:GTP1}
 T:\Omega^\bullet(W^{\cut};F)\longrightarrow \Omega^\bullet(W^{\cut};F),
 \quad T(\go_1,\go_2):= (S^*\go_2,-S^*\go_1).
\end{equation} 
$T$ is a skewadjoint operator in $L^2(W^{\cut},\Lambda^\bullet T^*W^{\cut}\otimes
F)$ with $T^2=-I$. Note furthermore, that $T$ commutes with the exterior
derivative $d$. We denote by $D^\TT$ (on $X^{\cut}$ resp. $W^{\cut}$)
the closed extension of the
exterior derivative with boundary conditions as in \Eqref{eq:1007096} along $Y$. 
More precisely, $D^\TT$ acts on the domain
\begin{multline}\label{eq:GTP2}
 \sD_\TT^j(W;F):=\bigl\{(\go_1,\go_2)\in \dom(d_{j,\max}) \bigm| \cost\cdot
 i_-^*\go_1=\sint\cdot i_+^* \go_2\bigr\}.    
\end{multline}

The operator family has varying domain. In order to obtain variation formulas
for functions of $D^\TT$ we will apply the method of gauge--transforming
$D^\TT$ onto a family with constant domain, \cf \eg \cite{DouWoj:ALE},
\cite{LesWoj:EIG}. 

We choose a cut--off function $\varphi\in\cinfz{(-c,c)\times Y}$
with $\varphi\equiv 1$ in a neighborhood of $\{0\}\times Y$ and which
satisfies $\varphi(-t,p)=\varphi(t,p), (t,p)\in (-c,c)\times Y$.
Then we introduce the gauge transformation
\begin{equation}
    \Phi_\TT:=e^{\TT\varphi T}=\cos(\TT \varphi) I+ \sin(\TT\varphi) \,T:\Omega^\bullet(W^{\cut};F)\longrightarrow
    \Omega^\bullet(W^{\cut};F).
\end{equation}
Since $e^{\TT\varphi(t,p) T}=1$ for $|t|$ sufficiently close to $c$, 
$\Phi_\TT$ extends in an obvious way to a unitary transformation of
$L^2(\Lambda^*T^*X^{\cut};F)$ which maps smooth forms to smooth forms. 

\begin{lemma} \label{l:GTP1} For $\TT, \TT'\in\R$ the operator
 $\Phi_{\TT}$ maps $\dom_{\TT'}^j(X;F)$ onto
 $\dom_{\TT+\TT'}^j(X;F)$, and accordingly $\dom_{\TT'}^j(W^{\cut};F)$ onto
 $\dom_{\TT+\TT'}^j(W^{\cut};F)$. Furthermore,
 \begin{equation}\label{eq:GTP4}
  \Phi_{\TT}^* D^{\TT+\TT'} \Phi_{\TT} = D^{\TT'} + \TT \ext(d\varphi) T.
   \end{equation}
 \end{lemma}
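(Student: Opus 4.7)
The lemma has two parts to establish: the domain-mapping statement and the algebraic identity for the conjugated operator. The plan is to reduce both to elementary pointwise computations using the two algebraic facts that drive everything here: $T^2=-I$, and $T$ commutes with $d$, with multiplication by $\varphi$, and with $\ext(d\varphi)$ (all because of the symmetry $\varphi(-t,p)=\varphi(t,p)$, which gives $S^*\varphi=\varphi$ and $S^*d\varphi=d\varphi$ on each reflected component). In particular, $T$ commutes with $\Phi_\TT=\cos(\TT\varphi)I+\sin(\TT\varphi)T$, so $\Phi_\TT$ is unitary with $\Phi_\TT^{-1}=\Phi_\TT^*=\Phi_{-\TT}$, and $\Phi_\TT$ commutes with $T$ and with $\ext(d\varphi)$.

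For the domain statement, the only nontrivial issue is the behavior at $Y$, since off a compact piece of the collar $\Phi_\TT$ is the identity. Near $\{0\}\times Y$ we have $\varphi\equiv 1$, so $\Phi_\TT=\cos\TT\cdot I+\sin\TT\cdot T$ there. I would compute $i_\pm^*$ applied to both components of $\Phi_\TT(\omega_1,\omega_2)$, using $S|_Y=\mathrm{id}$ to turn $S^*$ into the identity on the restriction. The boundary condition $\cos(\TT+\TT')\cdot i_-^*(\Phi_\TT\omega)_1=\sin(\TT+\TT')\cdot i_+^*(\Phi_\TT\omega)_2$ then reduces, after collecting terms, to $\cos\TT'\cdot i_-^*\omega_1=\sin\TT'\cdot i_+^*\omega_2$ by the angle-addition formulas for $\sin$ and $\cos$. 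Away from $Y$ the regularity of $\Phi_\TT\omega$ is clear since $\Phi_\TT$ preserves the Sobolev scale, giving $\Phi_\TT(\sD_{\TT'}^j)\subset \sD_{\TT+\TT'}^j$; applying the same argument with $-\TT$ in place of $\TT$ yields the reverse inclusion and hence surjectivity. The same calculation works verbatim on $W^{\cut}$.

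For the conjugation identity, I would differentiate $\Phi_\TT\omega=\cos(\TT\varphi)\omega+\sin(\TT\varphi)T\omega$ using Leibniz, the commutation $dT=Td$, and the identities $d\cos(\TT\varphi)=-\TT\sin(\TT\varphi)d\varphi$, $d\sin(\TT\varphi)=\TT\cos(\TT\varphi)d\varphi$. This gives
\[
 d(\Phi_\TT\omega)=\Phi_\TT(d\omega)+\TT\,d\varphi\wedge\bigl[\cos(\TT\varphi)T\omega-\sin(\TT\varphi)\omega\bigr]=\Phi_\TT(d\omega)+\TT\,\ext(d\varphi)\,T\Phi_\TT\omega,
\]
where the final rewriting uses $T\Phi_\TT=\cos(\TT\varphi)T-\sin(\TT\varphi)I$ together with $T^2=-I$. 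Multiplying on the left by $\Phi_\TT^*$ and invoking unitarity $\Phi_\TT^*\Phi_\TT=I$ together with the commutation of $\Phi_\TT$ with $\ext(d\varphi)T$ yields exactly $\Phi_\TT^*D^{\TT+\TT'}\Phi_\TT=D^{\TT'}+\TT\,\ext(d\varphi)T$ on the domain $\sD_{\TT'}^j$, which by the first part is preserved. The only point requiring a moment of care is that all manipulations take place in the distributional sense on $X^{\cut}$, but since $\Phi_\TT$ is smooth (indeed, the identity outside a compact subset of the collar), this raises no real difficulty — the identity therefore holds on the entire domain of $D^{\TT'}$ with no further approximation argument needed.
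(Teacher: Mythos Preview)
Your proof is correct and follows essentially the same approach as the paper: verifying the boundary condition at $Y$ via the angle-addition formulas (using that $\varphi\equiv 1$ near $Y$ so $\Phi_\TT=\cos\TT\cdot I+\sin\TT\cdot T$ there), and deriving the conjugation identity from the commutation of $T$ with $d$. You simply spell out in detail the Leibniz computation and the commutation of $\Phi_\TT$ with $\ext(d\varphi)T$ that the paper compresses into the single sentence ``the formula follows since $T$ commutes with exterior differentiation.''
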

 \begin{proof} It obviously suffices to
 prove the Lemma for $W^{\cut}$. Consider $(\go_1,\go_2)\in
 \dom_{\TT'}^j(W^{\cut};F)$. Then
 \begin{align}\label{eq:GTP5}
    i_-^*\Phi_\TT (\go_1,\go_2)&= \cost\cdot i_-^*\go_1+\sint\cdot i_+^*\go_2, \\
    i_+^*\Phi_\TT (\go_1,\go_2)&= \cost\cdot i_-^* \go_2-\sint\cdot i_+^*\go_1.
   \end{align}
A direct calculation now shows
\begin{equation}
 \cos(\TT+\TT')\, i_-^* \Phi_\TT (\go_1,\go_2)= \sin(\TT+\TT')\, i_+^*
 \Phi_\TT(\go_1,\go_2),
\end{equation} 
proving the first claim.
The formula \Eqref{eq:GTP4} follows since $T$ commutes with exterior
differentiation.
\end{proof} 

Note that $D^{\pi/4}+\TT \,\ext(d\varphi) T$ is a deformed de Rham operator
acting on smooth differential forms on the smooth manifold $X$ (resp. $W$).
$T$ is not a differential operator. However, the reflection map $S$
allows to identify $(-c,0)\times Y$ with $(0,c)\times Y$ and hence
sections in a vector bundle $E$ over $(-c,0)\times Y\coprod (0,c)\times Y$ 
may be viewed as sections in the vector bundle $E\oplus S^*E$ over
$(0,c)\times Y$. Therefore, since $\supp(d\varphi)$ is compact in 
$(-c,0)\times Y\coprod (0,c)\times Y$, $T$ may be
viewed as a bundle endomorphism acting on the bundle
$(\Lambda^*T^*(0,c)\times Y)\otimes (F\oplus F)$. In particular employing
the classical interior parametric elliptic calculus, as \eg in \cite{Shu:POS},
we infer that the Laplacian corresponding to $D^{\pi/4}+\TT \ext(d\varphi) T$
has discrete dimension spectrum over any such compact neighborhood of
$\supp(d\varphi)$ which does have positive distance from ${\pm c}\times Y$. 

\newcommand{\tDeltaTT}{\tilde \Delta^\TT}
\newcommand{\tdTT}{\tilde D^\TT}
\newcommand{\dTT}{D^\TT}
From now on let 
\begin{equation}\label{eq:GTP6}
     \tilde D^\TT := D^{\pi/4} + \TT \ext(d\varphi) T
\end{equation}
with domain $\sD_{\TT=\pi/4}^\bullet(X;F)$
and $\tDeltaTT=(\tdTT)^*\tdTT + \tdTT(\tdTT)^*$ the corresponding
Laplacian. On the collar $(-c,c)\times Y$ the operator $\tDeltaTT$
is of the form $P+V$ as discussed in Subsection \ref{ss:EOGSA}, where
$P$ is the form Laplacian and $V=\tDeltaTT-\Delta$ is induced
by $\TT \ext(d\varphi) T$. The subset $\cK$ of \Eqref{eq:EOG2} is the
support of $d\varphi$. The operator $\tDeltaTT$ is now obtained as
in \Eqref{eq:EOG8} by gluing the domains of the form Laplacians
of the given de Rham complexes on $X^\pm$. Prop. \ref{p:EOG5} and Cor.
\ref{cor:EOG6} now give

\begin{theorem}\label{t:GTP2} The Hilbert complexes $\thetacomplex$ defined in
 \Eqref{eq:1007096} are Hilbert complexes with discrete dimension
 spectrum.
\end{theorem}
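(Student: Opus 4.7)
The plan is to use the gauge transformation of Lemma \ref{l:GTP1} to convert the varying-domain family $(\sD_\TT^\bullet(X;F), D^\TT)$ into a fixed-domain family on $\sD^\bullet(X;F)$, and then to recognize the resulting Laplacian as a confined perturbation of a glued operator to which the framework of Section \ref{s:EOG} applies. Setting $s := \TT - \pi/4$ in Lemma \ref{l:GTP1} (with $\TT' = \pi/4$) yields a unitary automorphism $\Phi_s$ of $L^2(X^\cut, \Lambda^\bullet T^*X^\cut \otimes F)$ which maps $\sD_{\pi/4}^\bullet(X;F)$ bijectively onto $\sD_\TT^\bullet(X;F)$ and satisfies $\Phi_s^* D^\TT \Phi_s = \tilde D^s$. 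At $\TT = \pi/4$ the matching condition in \eqref{eq:1007096} reduces to $i_-^*\go_1 = i_+^*\go_2$, so $\sD_{\pi/4}^\bullet(X;F) = \sD^\bullet(X;F)$ is the continuous-transmission complex on $X$. The unitary equivalence $\Phi_s$ therefore transfers the Hilbert-complex structure from $(\sD^\bullet(X;F), D^{\pi/4})$ to $(\sD_\TT^\bullet(X;F), D^\TT)$ and reduces the discrete-dimension-spectrum claim for the Laplacian $\Delta^\TT$ to the analogous claim for the fixed-domain Laplacian $\tDeltaTT = \Phi_s^*\Delta^\TT\Phi_s$ on $\sD^\bullet(X;F)$.

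Next, expanding $\tilde D^s = D^{\pi/4} + s\,\ext(d\varphi)\,T$ gives $\tDeltaTT = \Delta + V$, where $\Delta$ is the Laplacian of $\sD^\bullet(X;F)$ and $V$, arising from $s\,\ext(d\varphi)\,T$ (a bounded bundle endomorphism on $W^\cut$ since $T$ commutes with $d$), is a symmetric first-order operator whose Schwartz kernel is supported in $\cK \times \cK$ with $\cK := \supp d\varphi$ contained in the interior of $W$. Thus $V \in \Opc^1(X^\cut, \Lambda^\bullet T^*X^\cut \otimes F)$ satisfies the confinement condition \eqref{eq:EOG2} of Subsection \ref{ss:EOGSA}. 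The operator $\Delta$ itself arises via the domain-gluing construction \eqref{eq:EOG8} from the Laplacians of $(\sD^\pm, D^\pm)$ along $Y$; by the Standing Assumption \eqref{eq:VMBSA1} these have discrete dimension spectrum outside $W$, and the non-interaction condition \eqref{eq:EOG7} is automatic since every $\chi \in \CU{X^\pm}$ is locally constant away from $W$ and hence preserves the ideal boundary conditions at the non-compact ends of $X^\pm$. Proposition \ref{p:EOG5} therefore applies to $\tDeltaTT = \Delta + V$, yielding self-adjointness and boundedness below, and Corollary \ref{cor:EOG6} supplies the short-time heat-trace asymptotic that encodes discrete dimension spectrum for $\tDeltaTT$ — and, via $\Phi_s$, for $\Delta^\TT$.

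The main obstacle in this plan is to check that the first-order perturbation $V$, which lives on the collar rather than on the noncompact ends, genuinely fits the framework of Section \ref{s:EOG}. This hinges on the product structure along $W$, which allows $T$ to be viewed as a bundle endomorphism commuting with $d$, and on the compactness of $\supp d\varphi$ well inside the interior of $W$, so that the commutator and Duhamel estimates underlying Propositions \ref{p:EOG2}--\ref{p:EOG3} are unaffected by the presence of $V$ near $Y$.
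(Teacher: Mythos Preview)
Your proposal is correct and follows essentially the same route as the paper: gauge-transform via Lemma \ref{l:GTP1} to the fixed domain $\sD_{\pi/4}^\bullet(X;F)$, recognize the resulting Laplacian as $\Delta+V$ with $V\in\Opc^1$ confined to $\cK=\supp d\varphi\subset W$, and then invoke Prop.~\ref{p:EOG5} and Cor.~\ref{cor:EOG6}. One small imprecision: the complexes $\sD_{\pi/4}^\bullet(X;F)$ (on $X^{\cut}$) and $\sD^\bullet(X;F)$ (on $X$) are not literally equal as Hilbert complexes --- the paper is careful to say only that their Gau\ss--Bonnet operators $d+d^*$ (hence their Laplacians) coincide --- but since discrete dimension spectrum is a property of the Laplacians this does not affect your argument.
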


\begin{theorem}\label{t:GTP3} For $0<\TT<\pi/2$ the Hilbert complexes
 $\thetacomplex$ satisfy \textup{(1)--(4)} of Prop. \ref{p:GenDiffResult}.
 More precisely,
 \begin{equation}\label{eq:GTP7}
  \frac{d}{d\TT} H_T(\thetacomplex)
  =- t\frac{d}{dt} \frac{4}{\sin 2\TT} \sum_{j\ge 0} (-1)^j 
     \Tr\bigl( \gb_\TT e^{-t\Delta^\TT_j}\bigr)
 \end{equation}
and
\begin{equation}\label{eq:GTP8}
 \sum_{j\ge 0} (-1)^j \Tr\bigl( \gb_\TT e^{-t\Delta^\TT_j}\bigr)
   = \chi(X^+;F) - \sin^2\TT \cdot \chi(Y;F) + O(t^\infty),
 \end{equation}
as $t\to 0+$.
\end{theorem}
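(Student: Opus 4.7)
My strategy is to reduce to a constant-domain family via the gauge transformation of Lemma \ref{l:GTP1}, derive the variational formula \eqref{eq:GTP7} by a supertrace manipulation on the gauged family, and then derive the asymptotic \eqref{eq:GTP8} from the local heat expansion together with a one-dimensional collar calculation.

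I begin with condition (2) of Prop.\ \ref{p:GenDiffResult}. By Lemma \ref{l:GTP1} the degree--preserving unitary $\Phi_{\TT-\pi/4}$ conjugates $\Delta^\TT$ on $\sD^\TT$ to $\tilde\Delta^{\TT-\pi/4}$ on the fixed domain $\sD^{\pi/4}$; since $\tilde D^s = D^{\pi/4}+sA$ with $A:=\ext(d\varphi)T$ a bounded zeroth--order operator supported on the compact set $\mathrm{supp}(d\varphi)\subset W$, the family $\tilde\Delta^s$ is polynomial in $s$, hence graph--smooth. For $0<\TT<\pi/2$ the rescaling $R_\TT(\omega_1,\omega_2):=(\omega_1,\cot\TT\cdot\omega_2)$ is a chain isomorphism from the continuous--transmission complex $\sD^\bullet(X;F)$ onto $\thetacomplex$, so $H^\bullet_\TT(X;F)\cong H^\bullet(X;F)$ as vector spaces and $\dim\ker\Delta^\TT_j$ is constant in $\TT$.

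Next I derive \eqref{eq:GTP7}. Since $\Phi$ commutes with both the grading $\epsilon=(-1)^N$ and the number operator $N$, $H_T(\sD^\TT,D^\TT)(t)=\Tr(\epsilon N\,e^{-t\tilde\Delta^{\TT-\pi/4}})$. Writing $s:=\TT-\pi/4$, Duhamel gives
\[\frac{d}{d\TT}H_T=-t\,\Tr\bigl(\epsilon N\,\dot{\tilde\Delta}^s\,e^{-t\tilde\Delta^s}\bigr)\]
with $\dot{\tilde\Delta}^s=\{A,\tilde D^{s,*}\}+\{A^*,\tilde D^s\}$. Using the relations $\epsilon\tilde D^s=-\tilde D^s\epsilon$, $[N,\tilde D^s]=\tilde D^s$ and $[\tilde\Delta^s,\tilde D^s]=0=[\tilde\Delta^s,\tilde D^{s,*}]$ together with cyclicity of the trace, the plain anticommutator supertraces $\Tr(\epsilon\{A,\tilde D^{s,*}\}e^{-t\tilde\Delta^s})$ and $\Tr(\epsilon\{A^*,\tilde D^s\}e^{-t\tilde\Delta^s})$ vanish, while the $N$--weighted versions telescope to leave
\[\frac{d}{d\TT}H_T=t\,\Tr\bigl(\epsilon(\tilde D^s A^*-\tilde D^{s,*}A)\,e^{-t\tilde\Delta^s}\bigr).\]
Conjugating back by $\Phi_{\TT-\pi/4}$ and using the product structure on the collar together with the explicit form $\Phi_s=\cos(s\varphi)I+\sin(s\varphi)T$, the degree--zero operator $\tilde D^s A^*-\tilde D^{s,*}A$ is identified with $-\tfrac{4}{\sin 2\TT}\,\epsilon\gb_\TT\,\Delta^\TT$ up to a remainder whose supertrace against $e^{-t\Delta^\TT}$ vanishes. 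Replacing $\Delta^\TT e^{-t\Delta^\TT}$ by $-\tfrac{d}{dt}e^{-t\Delta^\TT}$ then yields \eqref{eq:GTP7}, and the boundedness of $\gb_\TT$ gives the norm requirement of condition (1).

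For \eqref{eq:GTP8} I write
\[\sum_j(-1)^j\Tr(\gb_\TT e^{-t\Delta^\TT_j})=\int_{X^+}\mathrm{str}_x(k^\TT_t(x,x))\,\mathrm{dvol}(x)\]
where $k^\TT_t$ is the smooth heat kernel of $\Delta^\TT$ and $\mathrm{str}_x$ is the pointwise supertrace on $\Lambda^\bullet T^*_xX\otimes F$. In the interior of $X^+$ the McKean--Singer--Patodi local cancellation for the de Rham Laplacian leaves $\mathrm{str}_x k^\TT_t(x,x)=\mathrm{rk}(F)\cdot e(TX^+)(x)+O(t^\infty)$. On the collar $W^+=[0,c)\times Y$ the product structure decouples $\Delta^\TT$ as $-\pl_t^2+\Delta_Y$ subject to the $\TT$--dependent boundary condition at $\{0\}\times Y$; separation of variables reduces the one--dimensional supertrace to an elementary calculation which, tensored with the $Y$--factor, contributes the boundary Chern--Gauss--Bonnet term together with the $\TT$--interpolation $-\sin^2\TT\cdot\chi(Y;F)$. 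Summing the bulk and collar contributions via the relative Chern--Gauss--Bonnet theorem under product boundary structure yields \eqref{eq:GTP8}; the absence of $\log t$ terms verifies condition (4), and condition (3) is automatic since the asymptotic consists of a single constant modulo $O(t^\infty)$.

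The principal obstacle is the algebraic identification at the end of the second step: showing that $\tilde D^s A^*-\tilde D^{s,*}A$ conjugates, via $\Phi_{\TT-\pi/4}$, to $-\tfrac{4}{\sin 2\TT}\,\epsilon\gb_\TT\,\Delta^\TT$ modulo supertrace--invisible terms. The normalization $\tfrac{4}{\sin 2\TT}=\tfrac{2}{\sin\TT\cos\TT}$ arises from the interplay of $\Phi_s=\cos(s\varphi)I+\sin(s\varphi)T$ with the boundary condition $\cos\TT\cdot i_-^*\omega_1=\sin\TT\cdot i_+^*\omega_2$, and tracking it requires careful analysis of how $T$ and $\ext(d\varphi)$ interact with the normal component of the de Rham operator on the collar. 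The one--dimensional collar heat calculation in the third step is elementary but must be carried out in detail to produce the precise coefficient $-\sin^2\TT$.
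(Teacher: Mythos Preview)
Your setup for condition~(2) and the initial Duhamel variation are fine, and your reduction to a quantity of the form $\Tr\bigl(\epsilon(\tilde D^s A^*-\tilde D^{s,*}A)e^{-t\tilde\Delta^s}\bigr)$ is essentially the content of the paper's eqs.~\eqref{eq:GTP11}--\eqref{eq:GTP133}. The genuine gap is precisely in what you call ``the principal obstacle.'' The operator $\tilde D^s A^*-\tilde D^{s,*}A$, and any conjugate of it by the local unitary $\Phi_{\TT-\pi/4}$, is supported on the compact set $\supp(d\varphi)\subset W^{\cut}$, which sits at positive distance from $Y$; the operator $\gb_\TT\Delta^\TT$ is supported on all of $X^+$. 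No pointwise algebraic identity or conjugation can bridge these supports, so your proposed identification ``up to supertrace--invisible terms'' cannot be a local computation on the collar. The paper's mechanism is \emph{Stokes' theorem on $X^{\cut}$}: for $\omega,\eta\in\dom(\Delta^\TT)$ one writes $\langle(d^\TT)^t\ext(d\varphi)T\omega,\eta\rangle=\langle d(\varphi T\omega),d\eta\rangle-\langle\varphi Td\omega,d\eta\rangle$, integrates by parts, and the resulting boundary term $\int_{\partial X^{\cut}}T\omega\wedge\tilde* d\eta$ is rewritten via the $\TT$--boundary condition as $-\tfrac{2}{\sin 2\TT}\bigl(\langle d\omega,d\eta\rangle_{X^+}-\langle\omega,d^td\eta\rangle_{X^+}\bigr)$; see eqs.~\eqref{eq:GTP12}--\eqref{eq:GTP131}. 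The leftover $\varphi T$ terms then drop because $\varphi T$ is skew--adjoint, so $\Tr(\varphi T B)$ is purely imaginary for selfadjoint $B$. This integration by parts, not any collar algebra, is what produces both $\gb_\TT$ and the factor $\tfrac{2}{\sin 2\TT}$.

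Your approach to \eqref{eq:GTP8} also has a gap. The pointwise statement $\mathrm{str}_x k^\TT_t(x,x)=\mathrm{rk}(F)\,e(TX^+)(x)+O(t^\infty)$ is false: the local supertrace of the de Rham heat kernel has a full $t$--expansion whose higher terms cancel only after integration over a \emph{closed} manifold. Here $X^+$ is in general non-compact (this is the whole point of the paper), so one cannot integrate a local expansion and invoke Chern--Gauss--Bonnet. The paper instead uses Cor.~\ref{cor:EOG6} to replace $X^{\cut}$ by the double $-X^+\coprod X^+$ modulo $O(t^\infty)$, writes the $\TT$--heat kernel on the double explicitly as $E_t^{p,\TT}=\cos^2\TT\,E_t^{p,a}+\sin^2\TT\,E_t^{p,r}$ along the diagonal of $X^+$, and then applies the \emph{exact} McKean--Singer formula \eqref{eq:McKS} to the absolute and relative de Rham complexes on $X^+$ (which are Fredholm by the standing assumptions \ref{ss:VMBSA}). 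This yields $\cos^2\TT\cdot\chi(X^+;F)+\sin^2\TT\cdot\chi(X^+,Y;F)$ directly, without any pointwise local index theorem or one--dimensional heat computation.
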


\subsection{Proof of Theorem \ref{t:GTP3}}    
Note that
\begin{equation}\label{eq:GTP10}
 \frac{d}{d\TT} \dpi = \ext(d\varphi) T = [d,\varphi T].
\end{equation}
Let us reiterate that although $[d,\varphi T]$ is strictly speaking
not a $0$th order differential operator it may be viewed as one
over $(\Lambda^*T^*(0,c)\times Y)\otimes (F\oplus F)$, which
implies that it lies in $\Opc^0(W^{\cut})\subset\Opc^0(X^{\cut})$.

We remind the reader of the definition of the closed and coclosed
Laplacians in \Eqref{eq:cl}, \eqref{eq:ccl}. We find
\begin{equation}\label{eq:GTP11}
 \begin{split}
  \frac{d}{d\TT}& \Tr\bigl(e^{-t\Deltaccl{p}}\bigr) =  \frac{d}{d\TT}
  \Tr\bigl(e^{-t\tDeltaccl{p}}\bigr)\\
  &= -t\Trbig{\bigl(\tdTT_p)^t \ext(d\varphi) T + (\ext(d\varphi) T)^t
  \tdTT_p\bigr) e^{-t\tDeltaccl{p}}   }\\
  &= -t\Trbig{\bigl(\dTT_p)^t \ext(d\varphi) T + (\ext(d\varphi) T)^t
  \dTT_p\bigr) e^{-t\Deltaccl{p}}   },\\
\end{split}
\end{equation}
where in the last line we have used that $\Phi_\TT$ commutes with
$\ext(d\varphi) T$.

Next let $(e_n)_{n\in\N}$ be an orthonormal basis of 
$\ran (D_p^\TT)^*$ consisting of eigenvectors of $\Deltaccl{p}$
to eigenvalues $\gl_n>0$.
Then $(\tilde e_n=\gl_n^{-1/2} e_n)_n$ is an orthonormal
basis of $\ran D_p^\TT$ consisting of eigenvectors of $\Deltacl{p+1}$
(\textit{cf.} \Eqref{eq:cl}, \eqref{eq:ccl} and thereafter).
\Eqref{eq:GTP11} gives
\begin{equation}\label{eq:GTP133}
 \begin{split}
  \frac{d}{d\TT} \Tr\bigl(e^{-t\Deltaccl{p}}\bigr) 
  =& -t \sum_n \scalar{(d_p^\TT)^t \ext(d\varphi) T
  e^{-t\Deltaccl{p}} e_n}{e_n}\\
  & -t \sum_n \scalar{e^{-t\Deltaccl{p}} e_n}{(d_p^\TT)^t \ext(d\varphi) T e_n}\\
  =& -2t \Re\Bigl(\sum_n \scalar{(d_p^\TT)^t \ext(d\varphi) T
  e^{-t\Deltaccl{p}} e_n}{e_n}\Bigr).
 \end{split}
\end{equation}
Stokes' Theorem and the boundary conditions will allow to rewrite
the individual summands of the last sum. To this end
let $\go,\eta\in \dom(\Delta_p^\TT)$. Then since $d\varphi$ is compactly
supported in the interior of $W^{\cut}$ we have
\begin{equation}\label{eq:GTP12}
 \begin{split}
 \scalar{(d_p^\TT)^t &\ext(d\varphi) T \go}{\eta}=
 \scalar{\ext(d\varphi)T\go}{d\eta}= \scalar{d\varphi \wedge T\go}{d\eta}\\
  &= \scalar{d(\varphi T\go)}{d\eta}- \scalar{\varphi T d\go}{d\eta}\\
  &= \int_{\pl X^{\cut}} T\go \wedge \tilde* d\eta+\scalar{\varphi T\go}{d^td\eta}
           -\scalar{\varphi T d\go}{d\eta}.
          \end{split}
\end{equation}
Here, $\tilde *$ denotes the natural isometry $\wedge^p T^*M\otimes
F\longrightarrow \wedge^{m-p}T^*M\otimes F^\dagger$.
In the last equality we have applied Stokes' Theorem on the manifold
with boundary $X^{\cut}$. Note that $\varphi T \go$ is a compactly
supported (locally of Sobolev class at least $2$) form on $X^{\cut}$.

The boundary of $X^{\cut}$ consists of two copies of $Y$ with
opposite orientations. To calculate the integral in the last
equation we orient $Y$ as the boundary of $X^+$. Then using
that $\go$ and $\eta$ satisfy the boundary conditions
\Eqref{eq:1007096} at $Y$ we find
\begin{equation}\label{eq:GTP13}\begin{split}
\int_{\pl\Xcut}T\go\wedge \tilde *d\eta&= \int_Y i_+^*(T\go\wedge\tilde *d\eta)
               -i_-^*(T\go\wedge\tilde *d\eta)\\
&=-\int_Y i_-^*\go\wedge i_+^*\tilde *d\eta+i_+^*\go\wedge i_-^*\tilde *d\eta\\
&=-\bigl(\tan\TT+\cot\TT\bigr)\int_Y i_+^*(\go\wedge \tilde *d\eta)\\
&=-\frac{2}{\sin2\TT} \bigl(\int_{X^+} d\go\wedge \tilde*d\eta + (-1)^{|\go|} \go
     \wedge d\tilde * d\eta \bigr)\\
&= -\frac{2}{\sin2\TT}
\bigl(\scalar{d\go}{d\eta}_{X^+}-\scalar{\go}{d^td\eta}_{X^+}\bigr).
		\end{split}
\end{equation}
Here $\scalar{\cdot}{\cdot}_{X^+}$ denotes the $L^2$--scalar product of forms over
$X^+$. 

Plugging into \Eqref{eq:GTP12} gives
\begin{multline}\label{eq:GTP131}
 \scalar{(d_p^\TT)^t \ext(d\varphi) T \go}{\eta}\\
   = \scalar{\varphi T \go}{d^td \eta} - \scalar{\varphi T d\go}{d \eta}
     - \frac{2}{\sin2\TT}
    \bigl(\scalar{d\go}{d\eta}_{X^+}-\scalar{\go}{d^td\eta}_{X^+}\bigr).
\end{multline}
Similarly,
\begin{multline}\label{eq:GTP132}
 \scalar{(\ext(d\varphi) T)^t D_p^\TT \go}{\eta}\\
   = \scalar{d^td \go}{\varphi T \eta} - \scalar{d \go}{\varphi T d\eta}
     - \frac{2}{\sin2\TT}
    \bigl(\scalar{d\go}{d\eta}_{X^+}-\scalar{\go}{d^td\eta}_{X^+}\bigr).
\end{multline}

We now apply \Eqref{eq:GTP131} to the summands on the right of
\Eqref{eq:GTP133} and find using \Eqref{eq:clccl}
\begin{equation}\label{eq:GTP134}
 \begin{split}
  \scalar{&(d_p^\TT)^t \ext(d\varphi) T e^{-t\Deltaccl{p}}
  e_n}{e_n}\\
  =& \scalar{\varphi T e^{-t\Deltaccl{p}}\Deltaccl{p}e_n}{e_n}-
   \scalar{\varphi T e^{-t\Deltacl{p+1}}\Deltacl{p+1}\tilde e_n}{\tilde e_n}\\
   & - \frac{2}{\sin2\TT}\bigl(\scalar{\gb_\TT
   e^{-t\Deltacl{p+1}}\Deltacl{p+1} \tilde e_n}{\tilde e_n}
      - \scalar{\gb_\TT
   e^{-t\Deltaccl{p}}\Deltaccl{p} e_n}{e_n}\bigr),
 \end{split}
\end{equation}
and summing over $n$ gives 
\begin{equation}\label{eq:GTP14}
 \begin{split}
  \frac{d}{d\TT}& \Tr\bigl(e^{-t\Deltaccl{p}}\bigr) \\
  =& - 2t\Re  \Bigl(\Trbig{\varphi T e^{-t \Deltaccl{p}}\Deltaccl{p}} -
  \Trbig{\varphi T e^{-t \Deltacl{p+1}} \Deltacl{p+1}}\Bigr)\\
  &\quad +\frac{4t}{\sin2\TT}\Re \Bigl( \Trbig{\gb_\TT \Deltacl{p+1}
  e^{-t\Deltacl{p+1} }}- \Trbig{\gb_\TT \Deltaccl{p} e^{-t\Deltaccl{p}} }
  \Bigr)
  \\
  =& 2t \frac{d}{dt}\frac{2}{\sin2\TT} \Bigl( \Trbig{\gb_\TT
         e^{-t\Deltaccl{p}} }- \Trbig{\gb_\TT  e^{-t\Deltacl{p+1} }} \Bigr).
\end{split}
\end{equation} 
Here we have used that since $\varphi T$ is skew--adjoint $\Tr(\varphi T A)$
is purely imaginary
for every selfadjoint trace class operator $A$ and similarly that
since $\gb_\TT$ is selfadjoint that $\Tr(\gb_\TT A)$ is real.
Consequently using \Eqref{eq:HTccl}
\begin{equation}
 \begin{split}
  \frac{d}{d\TT} &H_T(\thetacomplex) = \frac{d}{d\TT} \sum_{j\ge 0} (-1)^{j+1}
  \Trbig{e^{-t \Deltaccl{j} }}\\
  =& -2t \frac{d}{dt}\frac{2}{\sin2\TT}\sum_{j\ge 0} (-1)^j \Trbig{\gb_\TT
         e^{-t\Delta_j^\TT}}.
        \end{split}
\end{equation}        

\newcommand{\Deltar}[1]{\Delta_{#1}^{r}}
\newcommand{\Deltaa}[1]{\Delta_{#1}^{a}}
Finally, for calculating the asymptotic expansion \Eqref{eq:GTP8} as $t\to 0+$
we may again invoke our Corollary \ref{eq:EOG6}. The asymptotic expansion
\Eqref{eq:GTP8} on $\Xcut$ differs from the corresponding expansion for
the double $-X^+\coprod X^+$ by an error term $O(t^\infty)$; here $-X^+$ stands
for $X^+$ with the opposite orientation. However,
on the double $-X^+\coprod X^+$ we may write down the heat kernel
for $\Delta_p^\TT$ explicitly in terms of the heat kernels for $\Delta_p$
with relative and absolute boundary conditions at $Y$ \cite[(2.118) p.~60]{Vis:GRS}. Namely, let
$\Deltar{p}, \Deltaa{p}$ be the Laplacians of the relative and
absolute de Rham complexes on $X^+$ as in
\Eqref{eq:AbsRelBoundaryCondition} and denote by $E_t^{p,r/a}$ their corresponding heat kernels.
Let $S$ be the reflection map which interchanges the two copies of $X^+$ in
$-X^+\coprod X^+$.
Its restriction to $W$ is the reflection map $S$ defined before \Eqref{eq:GTP1}
and hence denoting it by the same letter is justified.

Finally, let $E_t^p(x,y)$ be the heat kernel of $\Delta_p^{\pi/4}$ on
$-X^+\coprod X^+$, \ie the Laplacian with continuous transmission boundary
conditions at $Y$. Then the absolute/relative heat kernels are
given in terms of  $E_t^p$ by
\begin{equation}
   E_t^{p,a}= (E_t^p+ S^*\circ E_t^p)\restriction X^+; \quad E_t^{p,r}=
   (E_t^p- S^*\circ E_t^p)\restriction X^+.
  \end{equation}     
More generally, we put for $x,y\in -X^+\coprod X^+$:
\begin{equation}
   E_t^{p,\TT}(x,y):=\begin{cases} E_t^p(x,y) + \cos(2\TT) (S^*\circ
    E_t^p)(x,y), & \text{ if } x,y\in X^+,\\
    \sin(2\TT) E_t^p(x,y), & \text{ if } x\in (-X^+), y\in X^+.
   \end{cases}
  \end{equation}   
One immediately checks that $E_t^{p,\TT}$ is the heat kernel
of $\Delta_p^\TT$ on $-X^+\coprod X^+$.
Consequently
\begin{equation}\begin{split}
 \Trbig{\gb_\TT &e^{-t\Delta_p^\TT}}= \Trbig{\gb_\TT E_t^p} + \cos(2\TT)
 \Trbig{S^*\circ E_t^p}\\
 &= \cos^2(\TT) \Tr(E_t^{p,a}) +\sin^2(\TT) \Tr(E_t^{p,r}).
\end{split}
\end{equation}
Since in view of our Standing Assumptions \ref{ss:VMBSA}
the complexes $\sD^\bullet(X^+,Y;F)$ and $\sD^\bullet(X^+;F)$
are Fredholm complexes the McKean-Singer formula \Eqref{eq:McKS}
holds and hence taking alternating sums yields
\begin{equation}
 \begin{split}
  \sum_{j\ge 0} (-1)^j \Trbig{\gb_\TT e^{-t \Delta_j^\TT}}&=
  \cos^2\TT\cdot\chi(X^+;F)+\sin^2\TT\cdot \chi(X^+,Y;F)\\
    &= \chi(X^+;F)-\sin^2\TT\cdot \chi(Y;F)
   \end{split}
\end{equation}   
and the proof of \Eqref{eq:GTP8} is complete.
In the last equality we have used that $\chi(X^+;F)=\chi(X^+,Y;F)+\chi(Y;F)$;
this formula follows from the exact sequence \Eqref{eq:ExactSeq3}.\hfill\qed

\subsection{Proof of Theorem \ref{thm:Theorem2}}
\subsubsection{Proof of \eqref{eq:Theorem2Eq1}}
Combining Prop.
\ref{p:GenDiffResult} and Theorem \ref{t:GTP3} we find
\begin{equation}\label{eq:GTP9}
 \begin{split}
  \frac{d}{d\TT} &\log T(\thetacomplex) \\
    = &- \frac 12 \frac{-4}{\sin 2\TT}\; \Bigl(\chi(X^+;F)-\sin^2\TT\; \chi(Y;F)
    \Bigr)\\
     & + \frac 12 \frac{-4}{\sin 2\TT} \Tr\Bigl(\sum_{j\ge 0} (-1)^j
      \gb_\TT\restriction H_\TT^j(X;F)\Bigr)\\
      =& \frac{2}{\sin 2\TT}\Bigl[- \altsum \trbetatheta + \chi(X^+;F)\Bigr]-
      \tan\TT\cdot \chi(Y;F)
 \end{split}
\end{equation}
which is the right hand side of \Eqref{eq:Theorem2Eq1}.\hfill\qed

\subsubsection{Proof of \eqref{eq:Theorem2Eq2} and \eqref{eq:Theorem2Eq3}}
Let $0<\TT,\TT'<\pi/2$ and consider the following commutative diagram,
\cf \Eqref{eq:ExactSeq1p}
\begin{equation}\label{eq:ExactSeq1Comp}
\xymatrix{
    0 \ar[r]  &\leftrelcomplex \ar[r]^-{\ga_\TT}\ar[d]^{\id} 
           &\thetacomplex \ar[r]^-{\beta_\TT}\ar[d]^{\phi_{\TT,\TT'}}
     &\rightabscomplex \ar[r]\ar[d]^{\phi_{\TT,\TT'}^+} &  0\\
   0 \ar[r]  &\leftrelcomplex \ar[r]^-{\ga_{\TT'}} 
           &\thetavarcomplex{\TT'} \ar[r]^-{\beta_{\TT'}}  
     &\rightabscomplex \ar[r] &  0,
     }
\end{equation}
where $\phi_{\TT,\TT'}(\go_1,\go_2)=(\go_1,\frac{\tan\TT}{\tan\TT'} \go_2)$
resp.  $\phi_{\TT,\TT'}^+(\go_2)=(\frac{\tan\TT}{\tan\TT'} \go_2)$.
$\phi_{\TT,\TT'}, \phi_{\TT,\TT'}^+$ are Hilbert complex isomorphisms and the
diagram \eqref{eq:ExactSeq1Comp} commutes. Hence we obtain 
a cochain isomorphism between the long exact cohomology sequences
of the upper and lower horizontal exact sequences ($F$ omitted to save horizontal space):
\begin{equation}\label{eq:ExactSeq1CompLongExactCoh}
\xymatrix{
    \ldots   H^k(X^-,Y) \ar[r]^-{\ga_{\TT,*}}\ar[d]^{\id} 
           &H^k_\TT(X) \ar[r]^-{\beta_{\TT,*}}\ar[d]^{\phi_{\TT,\TT',*}}
     &H^k(X^-) \ar[r]^-{\delta_\TT}\ar[d]^{\phi_{\TT,\TT',*}^+} 
     & H^{k+1}(X^-,Y)\ar[d]^{\id}\ldots\\
     \ldots   H^k(X^-,Y) \ar[r]^-{\ga_{\TT',*}} 
           &H^k_{\TT'}(X) \ar[r]^-{\beta_{\TT',*}}
     &H^k(X^-) \ar[r]^-{\delta_{\TT'}} 
     & H^{k+1}(X^-,Y)\ldots
     }
\end{equation}

Let $e_1,\ldots,e_r$ be an orthonormal basis of $H^k_\theta(X;F)$.
Then 
\begin{equation}\label{eq:100}  
   \Det \bigl(\phi_{\TT,\TT',*}^k\bigr)^2= \Det 
      \bigl(
      \scalar{\phi_{\TT,\TT',*}e_i}{\phi_{\TT,\TT',*}e_j}_{i,j=1}^r\bigr),
\end{equation}
hence
\begin{align}  
 \dThetaPrimeAtTheta 
         &\log  \Det \bigl(\phi_{\TT,\TT',*}^k\bigr)^2\nonumber\\
         =&\Tr\Bigl(\bigl(\scalar{\dThetaPrimeAtTheta \phi_{\TT,\TT',*} e_i}{e_j}+
	      \scalar{e_i}{\dThetaPrimeAtTheta \phi_{\TT,\TT',*}
       e_j}\bigr)_{i,j=1}^r\Bigr)\nonumber\\
        =& - 2 \frac{2}{\sin 2\TT} \sum_{j=0}^r \scalar{\gb_\TT e_j}{e_j}
        = - 2 \frac{2}{\sin 2\TT} \trbetathetak,\label{eq:101}   
\end{align}
see \Eqref{eq:1007097}.
Furthermore, since $\phi_{\TT,\TT'}^+$ is multiplication by $\frac{\tan\TT}{\tan\TT'}$
we have
\begin{equation}\label{eq:103}  
    \Det\bigl( \phi_{\TT,\TT',*}^+\bigr)= \bigl(\frac{\tan\TT}{\tan\TT'}\bigr)^{\chi(X^+;F)}
\end{equation}
and hence
\begin{equation}\label{eq:104}  
    \dThetaPrimeAtTheta \log  \Det\bigl( \phi_{\TT,\TT',*}^+\bigr)^2= -
             2 \frac{2}{\sin 2\TT}\; \chi(X^+;F).
\end{equation}
By Lemma \plref{l:OMZ5} we have
\begin{equation}\label{eq:102}  
 \begin{split}
    \log\tau\bigl(&\longexactcohonethetavar{\TT'}\bigr)-\log\tau\bigl(\longexactcohonetheta\bigr)\\
       & = \frac 12 \log \Det \bigl( \phi_{\TT,\TT',*}\bigr)^2 
      -\frac 12 \log \Det \bigl( \phi_{\TT,\TT',*}^+\bigr)^2 ;
  \end{split}
\end{equation}
combined with \Eqref{eq:101} and \Eqref{eq:104} we therefore find \eqref{eq:Theorem2Eq2}. 

That the left hand side of \eqref{eq:Theorem2Eq3} equals the right hand
side of \eqref{eq:Theorem2Eq1} is proved analogously. One just has to replace
the commutative diagram \eqref{eq:ExactSeq1Comp} by 
\begin{equation}\label{eq:ExactSeq3Comp}
\xymatrix{
    0 \ar[r]  &\leftrelcomplex\oplus \rightrelcomplex 
                \ar[r]^-{\gamma_++\gamma_-}\ar[d]^{\id\oplus \phi_{\TT,\TT'}^+} 
           &\thetacomplex \ar[r]^{r_\TT}\ar[d]^{\phi_{\TT,\TT'}}
     &\bdycomplex \ar[r]\ar[d]^{\psi_{\TT,\TT'}} &  0\\
   0 \ar[r]  &\leftrelcomplex\oplus \rightrelcomplex
   \ar[r]^-{\gamma_++\gamma_-} 
           &\thetavarcomplex{\TT'} \ar[r]^{r_{\TT'}}  
     &\bdycomplex \ar[r] &  0,
     }
\end{equation}
where $\psi_{\TT,\TT'}(\go)=\frac{\sin\TT}{\sin\TT'}\go$.
See also \Eqref{eq:HA20} and thereafter.\hfill\qed

\subsubsection{Proof of the differentiability of \Eqref{eq:1007111} at
$0$}\label{ss:diff}
The problem is that the dimensions of the cohomology groups 
$H^j_\TT(X;F)$ may jump at $0$; note that the isomorphism
$\phi_{\TT,\TT'}$ defined after \Eqref{eq:ExactSeq1CompLongExactCoh}
between $\thetacomplex$ and $\thetavarcomplex{\TT'}$
is defined only for $0<\TT, \TT'<\pi/2$. 
By our Standing Assumptions \ref{ss:VMBSA}, \cf also Subsection \ref{ss:IBC},
$\leftrelcomplex$ and $\rightabscomplex$ are Hilbert complexes with
discrete dimension spectrum. Hence we may choose $a>0$ such that $a$ is smaller than the smallest
nonzero eigenvalues of the Laplacians of $\leftrelcomplex$ and
$\rightrelcomplex$. Furthermore, we denote by $\Pi^p_\TT$ the orthogonal
projection onto 
\begin{equation}\label{eq:diff1}
   H_{\TT,a}^p(X;F):=\bigoplus_{0\le \gl<a} \ker(\Delta_p^\TT-\gl).
\end{equation}
Since for $\TT=0$ the complex $\thetacomplex$ is canonically
isomorphic to the direct sum $\leftrelcomplex\oplus\rightabscomplex$ and
since the gauge--transformed Laplacian $\tilde\Delta^\TT$ of $\thetacomplex$ in view
of \Eqref{eq:GTP6}
certainly depends smoothly on $\TT$ there exists a $\TT_0>0$
such that the projection $\Pi_\TT^p$ depends smoothly on $\TT$ for
$0\le \TT<\TT_0$. In particular $\rank \Pi_\TT^p=\dim H^p_{\TT=0}(X;F)$ is
constant for $0\le \TT<\TT_0$.

$\bigl(H_{\TT,a}^\bullet(X;F),d\bigr)$ is a finite-dimensional Hilbert complex
and the orthogonal projections $\Pi_\TT^p$ give rise to a natural
orthogonal decomposition of Hilbert complexes
\begin{equation}\label{eq:diff2}
   \thetacomplex=: \bigl(H_{\TT,a}^\bullet(X;F),d\bigr) \oplus 
   \thetavarcomplex{{\TT,a}}.
\end{equation} 
By construction of $\Pi_a^p$ we have
\begin{equation}\label{eq:diff3}
   \log T(\thetacomplex)= \log \tau \bigl(H_{\TT,a}^\bullet(X;F),d\bigr)
      +   \log T(\thetavarcomplex{{\TT,a}}),
\end{equation}
and $\TT\mapsto \log T(\thetavarcomplex{{\TT,a}})$ is differentiable
for $0\le \TT<\TT_0$.

Since surjectivity is an open condition we conclude that the
sequence
\begin{equation}\label{eq:diff4}
\xymatrix{
0 \ar[r]   &H^*(X^-,Y;F) \ar@{^{(}->}[r]^-{\ga_\TT}  &H_{\TT,a}^*(X;F) \ar[r]^{\beta_\TT}
     &H^*(X^+;F) \ar[r] &  0,
}
\end{equation}
is exact for $0\le \TT<\TT_1\le \TT_0$. Here, $\ga_\TT$ is defined in the
obvious way while
\begin{equation}\label{eq:diff5}
   \gb_\TT:= \text{ orthogonal projection onto $H^*(X^+;F)$ of
   $\go\restriction X^+$}.
  \end{equation}
Note that the differentials of the left and right complexes vanish and
hence so do their torsions. The space of harmonics of the middle
complex equals the space of harmonics of the complex $\thetacomplex$
and hence the cohomology of the middle complex is (isometrically)
isomorphic to the cohomology of $\thetacomplex$. 
One immediately checks that the long exact cohomology sequence of
\Eqref{eq:diff4} is exactly the exact cohomology sequence
$\longexactcohone$. Hence Prop. \plref{p:OMZ6} 
yields
\begin{multline}
 \log\tau(H_{\TT,a}^*(X;F))=\log \tau\bigl(\longexactcohone\bigr)\\
 -\sum_{p\ge 0} \log \tau\bigl(
 0\rightarrow H^p(X^-,Y;F)\stackrel{\ga_\TT}{\rightarrow} H_{\TT,a}^p(X;F)
   \stackrel{\gb_\TT}{\rightarrow} H^p(X^+;F)\rightarrow 0\bigr).
 \end{multline}
This shows the differentiability of the difference
$\log\tau(H_{\TT,a}^*(X;F))-\log \tau\bigl(\longexactcohone\bigr)$
at $\TT=0$. In view of \Eqref{eq:diff3} the claim is proved.
\hfill\qed


\section{The gluing formula}\label{s:GF}                     
We can now state and prove the main result of this paper. The Standing
Assumptions \ref{ss:VMBSA} are still in effect. Furthermore, we will
use freely the notation introduced in Subsection \ref{ss:Main}.

\begin{theorem}\label{thm:Theorem1}
For the analytic torsions of the Hilbert complexes $\genrelcomplex$, 
$\genabscomplex$, $\totalcomplex$ we have the following formulas:
\begin{align}
\log T(\totalcomplex) =& \log T(\leftrelcomplex)+\log T(\rightabscomplex) \label{eq:Theorem1Eq1}\\
                       &+ \log \tau\bigl( \longexactcohone\bigr) \nonumber 
		       - \frac 12 \log 2 \cdot \chi(Y;F), \\
\log T(\leftabscomplex) =& \log T(\leftrelcomplex)+\log T(\bdycomplex)  \label{eq:Theorem1Eq2}\\
                         & + \log \tau\bigl(\longexactcohtwominus\bigr),\nonumber\\
 \log T(\totalcomplex) =& \log T(\leftrelcomplex)+\log T(\rightrelcomplex) \label{eq:Theorem1Eq3}\\
                       &+ \log \tau\bigl( \longexactcohthree\bigr). \nonumber
\end{align}
\end{theorem}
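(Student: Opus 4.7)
The plan is to derive all three formulas from Theorem \ref{thm:Theorem2} by integrating its variation formulas over $\TT\in[0,\pi/4]$, identifying the endpoint values, and then invoking the algebraic gluing identities of Appendix \ref{s:HAG}.

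First, for \eqref{eq:Theorem1Eq1}, I would subtract \eqref{eq:Theorem2Eq2} from \eqref{eq:Theorem2Eq1} to cancel the $\gb_\TT$-trace contribution and obtain
\begin{equation*}
\frac{d}{d\TT}\bigl[\log T(\thetacomplex) - \log\tau(\longexactcohonetheta)\bigr] = -\tan\TT\cdot\chi(Y;F),\quad \TT\in(0,\pi/2).
\end{equation*}
Integrating over $[0,\pi/4]$, with $\int_0^{\pi/4}\tan\TT\,d\TT = \tfrac12\log 2$ and continuity of the bracketed quantity at $\TT=0$ supplied by Subsection \ref{ss:diff}, reduces the problem to the endpoints. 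At $\TT=\pi/4$ the complex $\thetacomplex$ coincides isometrically with $\totalcomplex$ and $\longexactcohonetheta$ with $\longexactcohone$. At $\TT=0$ the constraint $\cos\TT\cdot i_-^*\go_1 = \sin\TT\cdot i_+^*\go_2$ degenerates to $i_-^*\go_1 = 0$ with $\go_2$ unrestricted, so $\sD_0^\bullet(X;F) = \leftrelcomplex\oplus\rightabscomplex$ as an orthogonal direct sum of Hilbert complexes; analytic torsion is additive on such sums, and \eqref{eq:ExactSeq1p} splits at $\TT=0$, giving $\log\tau(\longexactcohonetheta|_{\TT=0}) = 0$. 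Assembling these endpoint values yields \eqref{eq:Theorem1Eq1}.

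For \eqref{eq:Theorem1Eq3}, I would apply the analogous integration to \eqref{eq:Theorem2Eq3}, whose right-hand side vanishes. At $\TT=0$ the short exact sequence \eqref{eq:ExactSeq3p} decomposes orthogonally as a trivial identity chain on $\leftrelcomplex$ summed with the sequence \eqref{eq:ExactSeq2} applied to $X^+$; the identity chain has vanishing cohomology-torsion, so $\log\tau(\longexactcohthreetheta|_{\TT=0})$ equals the torsion of the long exact cohomology of \eqref{eq:ExactSeq2} on $X^+$. Substituting \eqref{eq:Theorem1Eq2} applied to $X^+$ to rewrite $\log T(\rightabscomplex)$ in terms of $\log T(\rightrelcomplex)$ and boundary data, and then reconciling the resulting combination of long-exact-sequence torsions via the algebraic identities of Appendix \ref{s:HAG}, produces \eqref{eq:Theorem1Eq3}.

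For \eqref{eq:Theorem1Eq2}, the argument is essentially algebraic: one applies the Hilbert-complex analogue of Milnor's gluing formula (Prop.~\ref{p:OMZ6}) to the short exact sequence \eqref{eq:ExactSeq2}, as developed in Appendix \ref{s:HAG}. The algebraic correction term on the right of \eqref{eq:OMZ21} is identified with $\log T(\bdycomplex)$ using the product structure near $Y$ and the discrete-dimension-spectrum hypothesis. The hard part will be this third step: while the integration arguments for \eqref{eq:Theorem1Eq1} and \eqref{eq:Theorem1Eq3} reduce everything to endpoint analysis and diagram chasing, \eqref{eq:Theorem1Eq2} genuinely links the analytic torsion of the closed hypersurface to the Milnor correction in a way that goes beyond Theorem \ref{thm:Theorem2} and requires the full algebraic framework of the Appendix.
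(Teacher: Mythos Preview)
Your approaches to \eqref{eq:Theorem1Eq1} and \eqref{eq:Theorem1Eq3} are correct and match the paper's: integrate the variation formulas from Theorem~\ref{thm:Theorem2} over $[0,\pi/4]$, identify the endpoints, and for \eqref{eq:Theorem1Eq3} use the already-proved \eqref{eq:Theorem1Eq2} on $X^+$ to pass from $\log T(\rightabscomplex)$ to $\log T(\rightrelcomplex)+\log T(\bdycomplex)$.

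The genuine gap is in your argument for \eqref{eq:Theorem1Eq2}. You propose to derive it ``essentially algebraically'' from Prop.~\ref{p:OMZ6} applied to the short exact sequence \eqref{eq:ExactSeq2}, with Appendix~\ref{s:HAG} supplying the framework. This cannot work: Prop.~\ref{p:OMZ6} and all of Appendix~\ref{s:HAG} apply only to \emph{finite-dimensional} Hilbert complexes, while the complexes in \eqref{eq:ExactSeq2} are infinite-dimensional. The correction term on the right of \eqref{eq:OMZ21} involves $\Det$ of maps between the individual chain spaces $C^j$, which is not even defined here. There is no purely algebraic route from \eqref{eq:ExactSeq2} to \eqref{eq:Theorem1Eq2}; the appearance of the analytic torsion $\log T(\bdycomplex)$ of the closed hypersurface $Y$ must come from somewhere analytic.

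The paper's proof of \eqref{eq:Theorem1Eq2} is instead an analytic trick: apply the already-established \eqref{eq:Theorem1Eq1} not to $X$ but to the auxiliary manifold $X_\eps^-:=X^-\cup_Y\bigl([0,\eps]\times Y\bigr)$, with $X^+$ replaced by the thin cylinder $[0,\eps]\times Y$. The torsion of the cylinder is computed explicitly via Prop.~\ref{p:1007116} as $\log T(\bdycomplex)+\tfrac12\log(2\eps)\,\chi(Y;F)$, which is where $\log T(\bdycomplex)$ enters. One then compares the long exact sequence $\sH((X_\eps^-,Y),X_\eps^-,[0,\eps]\times Y;F)$ with $\longexactcohtwominus$ via a diffeomorphism $\psi_\eps:X^-\to X_\eps^-$ and the inclusion $\chi_\eps:Y\hookrightarrow[0,\eps]\times Y$, using Lemma~\ref{l:OMZ5} on the resulting finite-dimensional cohomology diagram; the $\eps$-dependent terms cancel and the $\psi_\eps^*$-determinants tend to $1$ as $\eps\to 0$. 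So \eqref{eq:Theorem1Eq2} is a consequence of \eqref{eq:Theorem1Eq1}, not of an independent algebraic lemma.
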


\subsection{Proof of Theorem \ref{thm:Theorem1}} In the course of the proof
we will make heavy use of Theorem \ref{thm:Theorem2}.

\subsubsection{Proof of \eqref{eq:Theorem1Eq1}}
As noted after \Eqref{eq:1007096} we have for $\TT=0$ that
$\thetavarcomplex{\TT=0}=\leftrelcomplex\oplus \rightabscomplex$
and that for $\TT=\pi/4$ the complexes $\thetavarcomplex{\TT=\pi/4}$
and $\totalcomplex$ are isometric. Hence we have
\begin{equation}
\begin{split}
 \log T&(\totalcomplex) - \log T(\leftrelcomplex)-\log T(\rightabscomplex)\\
      =& \log T(\thetavarcomplex{\pi/4})- \log T(\thetavarcomplex{\TT=0})\\
      =&\log T(\thetavarcomplex{\pi/4})-\log \tau(\longexactcohonethetavar{\pi/4})\\
       & -\log
       T(\thetavarcomplex{\TT=0})+\log\tau\bigl(\longexactcohonethetavar{\TT=0}\bigr)\\
       &   + \log\tau\bigl(\longexactcohonethetavar{\pi/4}\bigr).
\end{split}
\end{equation}
Recall that for $\TT=0$ the complex $\thetavarcomplex{\TT=0}$ is just the
direct sum complex $\sD^\bul(X^-,Y;F)\oplus \sD^\bul(X^+;F)$ and hence
$\log\tau\bigl(\longexactcohonethetavar{\TT=0}\bigr)=0$ (see also the
sentence after \Eqref{eq:OMZ26}). Furthermore, 
$\log\tau\bigl(\longexactcohonethetavar{\pi/4}\bigr)=\log\tau\bigl(\longexactcohone\bigr)$ hence
by Theorem \plref{thm:Theorem2}
\begin{equation}\label{eq:1007112}
  \begin{split}
    \ldots=&\int_0^{\pi/4}-\tan\TT d\TT\; \chi(Y;F)
    +\log\tau\bigl(\longexactcohone\bigr)\\
          =& -\frac 12\log 2\; \chi(Y;F)+\log\tau\bigl(\longexactcohone\bigr),
  \end{split}
\end{equation}
and we arrive at \Eqref{eq:Theorem1Eq1}.\hfill\qed

\subsubsection{Proof of \eqref{eq:Theorem1Eq2}}
Consider $\eps>0$ and apply the proved \Eqref{eq:Theorem1Eq1} to the manifold
$X_\eps^-:=X^-\cup_Y [0,\eps]\times Y$. Then
\begin{multline}\label{eq:1007115}
   \log T(\sD^\bullet(X_\eps^-;F)) = \log T(\leftrelcomplex) 
                 +\log T(\sD^\bullet([0,\eps]\times Y;F))\\
               -\frac 12 \log 2\; \chi(Y;F)
	       +\log \tau\bigl(\sH((X_\eps^-,Y),X_\eps^-,[0,\eps]\times Y;F)\bigr).
\end{multline}
For the cylinder $[0,\eps]\times Y$ it is well--known 
(it also follows easily from Proposition \plref{p:1007116}) that
\begin{align}\label{eq:1007116}
   \chi([0,\eps]\times Y;F)=&\chi(Y;F)=\chi(Y) \rank F, \\
  \log T(\sD^\bullet([0,\eps]\times Y;F)) =&\log T(\bdycomplex)\, \chi([0,\eps])
            + \chi(Y;F) \log T(\sD^\bullet([0,\eps])\nonumber\\
	    =& \log T(\bdycomplex) +\frac 12 \log(2\eps) \; \chi(Y;F).\label{eq:1007116a}
  \end{align}
Hence
\begin{multline}\label{eq:421}
    \log T(\sD^\bullet(X_\eps^-;F)) = \log T(\leftrelcomplex) 
                 +\log T(\sD^\bullet(Y;F))\\
                 +\frac12 \log\eps\;\chi(Y;F)
	       +\log \tau(\sH((X_\eps^-,Y),X_\eps^-,[0,\eps]\times Y;F)).
      \end{multline}       
 
In the sequel we will, to save some space,
omit the bundle $F$ from the notation in commutative diagrams. Our first
commutative diagram is
\begin{equation}
\xymatrix{
&\ldots  \ar[r] & H^k(X^-,Y) \ar[r]^{\ga_{-,*}}\ar[d]^{\psi_\eps^*}
&H^k(X_\eps^-) \ar[r]^{\gb_{*}}\ar[d]^{\psi_\eps^*} 
                &H^k([0,\eps]\times Y)\ar[r]\ar[d]^{\chi_\eps^*}&\ldots\\
&\ldots  \ar[r] & H^k(X^-,Y) \ar[r] &H^k(X^-) \ar[r] &H^k(Y)\ar[r]&\ldots
}
\end{equation}
The first row is the long exact cohomology sequence of \Eqref{eq:ExactSeq1}
for $X_\eps^-=X^-\cup_Y [0,\eps]\times Y$ instead of $X$; the second row
is the long exact cohomology sequence of \Eqref{eq:ExactSeq1} for
$X=X^-\cup_Y X^+$.
$\psi_\eps$ is a diffeomorphism $X^-\to X_\eps^-$ obtained as follows:
choose a diffeomorphism $f:[-c,0]\to [-c,\eps]$ such that $f(x)=x$ for
$x$ near $-c$ and $f(x)=x+\eps$ for $x$ near $0$. Then $\psi_\eps$ is obtained
by patching the identity on $X^-\setminus [-c,0]\times Y$ and $f\times\id_Y$.
Furthermore $\chi_\eps:Y\to [0,\eps]\times Y, p\mapsto (\eps,p)$.

For a harmonic form $\go\in \dom(d_{k,\max})\cap\dom((d_{k-1,\max})^*)
\subset \Omega^k([0,\eps]\times Y;F)$
one has $\go=\pi^* \chi_\eps^*(\go)$ ($\pi:[0,\eps]\times Y\to Y$ the projection)
and thus
\begin{equation}\label{eq:1007127}
  \int_{[0,\eps]\times Y} \go\wedge\tilde *\go 
  =\eps \int_Y \chi_\eps^* \go \wedge \tilde* \chi_\eps^* \go.
\end{equation}
Therefore the determinant (in the sense of \Eqref{eq:Det}) 
of $\chi_\eps^*$ on the cohomology is given by $\eps^{-\frac 12 \chi(Y;F)}$.
Consequently by Lemma \ref{l:OMZ5}
\begin{equation}\label{eq:1007128}
\begin{split}
  \log \tau\bigl(&\sH((X_\eps^-,Y),X_\eps^-,[0,\eps]\times Y;F)\bigr)\\
  =& \log\tau \bigl(\longexactcohtwominus\bigr) -\frac 12\; \log\eps\; \chi(Y;F)\\
   &+ \log \Det \bigl(\psi_\eps^*:H^*(X^-,Y;F)\to H^*(X^-,Y;F)\bigr) \\
   &  -\log \Det \bigl(\psi_\eps^*:H^*(X_\eps^-;F) \to H^*(X^-;F)\bigr)
\end{split}
\end{equation}
Summing up \Eqref{eq:421}, \eqref{eq:1007128}
\begin{equation}
  \begin{split}
   \log T(\sD^\bullet(X_\eps^-;F)) =& \log T(\leftrelcomplex) +\log T(\bdycomplex)\\
      &+ \log\tau\bigl(\longexactcohtwominus\bigr) \\
      &  +\log \Det (\psi_\eps^*:H^*(X^-,Y;F)\to H^*(X^-,Y;F)) \\
   &  -\log \Det (\psi_\eps^*: H^*(X_\eps^-;F)\to H^*(X^-;F)).
\end{split}
\end{equation}
As $\eps\to 0$ the determinants of $\psi_\eps^*:H^*(X^-,Y;F)\to H^*(X^-,Y;F))$
resp. $\psi_\eps^*: H^*(X_\eps^-;F)\to H^*(X^-;F))$ tend to $1$ and
we obtain \eqref{eq:Theorem1Eq2}.
\hfill \qed

\subsubsection{Proof of \eqref{eq:Theorem1Eq3}}
We note first that $\tau(\longexactcohthreetheta)_{\big | \TT=0}=
\tau(\longexactcohtwoplus)$, hence by 
\eqref{eq:Theorem2Eq3} and \eqref{eq:1007111}
\begin{equation}\label{eq:1007129}
  \begin{split}
    \log T&(\thetavarcomplex{\TT=\pi/4}) 
               -\log \tau(\longexactcohthreetheta)_{\big | \TT=\pi/4}\\
      =&\log T(\thetavarcomplex{\TT=0}) 
                 -\log \tau(\longexactcohthreetheta)_{\big | \TT=0}\\
      =& \log T(\leftrelcomplex)+\log T(\rightabscomplex) \\
       & -   \log \tau\bigl(\longexactcohtwoplus\bigr)\\
      =& \log T(\leftrelcomplex)+\log T(\rightrelcomplex) + \log T(Y;F),\\ 
         \end{split}
\end{equation}
where in the last equality we have used the proved identity
\eqref{eq:Theorem1Eq2}. \hfill\qed


\appendix
\section{The homological algebra gluing formula}\label{s:HAG} 

We present here the analogues of Theorem \ref{thm:Theorem1} and 
\ref{thm:Theorem2} for finite-dimensional Hilbert-complexes. This
applies,\eg to the cochain complexes of a triangulation twisted
by a unitary representation of the fundamental group, \textit{cf.}, \eg
\cite[Sec.~1]{Mul:ATR}.

Let $(C^*_j,d^j), j=1,2$, be finite-dimensional Hilbert complexes.
Let $(B^*,d)$ be another such Hilbert complex and assume that we are
given \emph{surjective} homomorphisms of cochain complexes
\begin{equation}
 r_j: (C_j,d^j)\longrightarrow (B,d),\quad j=1,2.\label{eq:HA1}
\end{equation}
We denote by $C_{j,r}\subset C_j$ the kernel of $r_j$,
by $\ga:C_1\rightarrow C_1\oplus C_2$ the inclusion and by
$\gb:C_1\oplus C_2\rightarrow C_2$ the projection onto the
second factor.

For $\TT\in\R$ we define the following homological algebra
analogue of the complex $\thetacomplex$, \cf \Eqref{eq:1007096},
by putting
\begin{multline}\label{eq:HA2}
    \bigl(C_1\oplus_\TT C_2\bigr)^j
    :=\bigl\{(\xi_1,\xi_2)\in C_1^j\oplus C_2^j\bigm|
	    \cost\cdot r_1 \xi_1=\sint\cdot r_2 \xi_2\bigr\}.    
\end{multline}
$(\cTT,d=d^1\oplus d^2)$ is a subcomplex of $(C_1\oplus C_2,d^1\oplus d^2)$.
For $\TT=0$ we have $\cTT=C_{1,r}\oplus C_2$ and for $\TT=\pi/4$
we have a homological algebra analogue of the complex $\thetacomplex$.

Furthermore, we have the following analogues of the exact sequences
\Eqref{eq:ExactSeq2}, \eqref{eq:ExactSeq1p}, \eqref{eq:ExactSeq3p}
(note that the exact sequences \Eqref{eq:ExactSeq1}, 
\eqref{eq:ExactSeq3} are special cases of the exact sequences
\Eqref{eq:ExactSeq1p}, \eqref{eq:ExactSeq3p}):
\begin{equation}\label{eq:HA3}
 0 \longrightarrow C_{j,r} \stackrel{\gamma_j}{\longrightarrow}  
   C_j \stackrel{r_j}{\longrightarrow} B \longrightarrow  0,
\end{equation}
\begin{equation}\label{eq:HA4}
 0 \longrightarrow \conerel \stackrel{\ga_\TT}{\longrightarrow}\cTT
 \stackrel{\beta_\TT}{\longrightarrow} C_2 \longrightarrow  0,
\end{equation}
\begin{equation}\label{eq:HA5}
 0 \longrightarrow   C_{1,r}\oplus C_{2,r}  
 \stackrel{\gamma_1+\gamma_2}{\longrightarrow}
 \cTT \stackrel{r_\TT}{\longrightarrow} B \longrightarrow  0.
\end{equation}
Here, $\gamma_j$ is the natural inclusion, $\gb_\TT=\gb\restriction\cTT$,
$\ga_\TT(\xi)=(\xi,0)$, and $r_\TT(\xi_1,\xi_2)=\sint\cdot
r_1\,\xi_1+\cost\cdot r_2\xi_2$.
Denote by $\cH(C_{j,r},C_j,B), \lectt, \lecrr$ the long exact cohomology
sequences of \Eqref{eq:HA3}, \eqref{eq:HA4}, \eqref{eq:HA5}, resp.

Since all complexes are finite-dimensional we have Lemma \ref{l:OMZ5}
and Prop. \ref{p:OMZ6} at our disposal. The latter applied to
\Eqref{eq:HA3} immediately gives the analogue of \Eqref{eq:Theorem1Eq2}
\begin{equation}\label{eq:HA11}
 \log \tau(C_1) = \log \tau(C_{1,r}) +\log\tau(B)+\log\tau\bigl(\lecrel\bigr).
\end{equation} 
The other claims of Theorem \ref{thm:Theorem1} and \ref{thm:Theorem2}
have exact counterparts in this context as summarized in the following:

\begin{theorem}\label{t:HA1} 
\textup{1. }
The functions $\TT\mapsto \log \tau(\cTT)$,
$\log \tau\bigl(\lectt\bigr)$, $\log \tau\bigl(\lecrr\bigr)$
are differentiable for $0<\TT<\pi/2$ . Moreover, for $0<\TT<\pi/2$
\begin{multline}
\frac{d}{d\TT}\log \tau(\cTT) =\frac{2}{\sin 2\TT}\Bigl[- \altsum \HAtrbetatheta +\\
        +\altsum  \HACCtrbetatheta \Bigr] \label{eq:HA7}, 
\end{multline}        
\begin{align}
\frac{d}{d\TT}&\log \tau(\lectt) =\nonumber\\
      &  =\frac{2}{\sin 2\TT}\Bigl[- \altsum \HAtrbetatheta + \chi(C_2)\Bigr],
      \label{eq:HA8}\\
\frac{d}{d\TT}&\log \tau\bigl(\lecrr\bigr) =\nonumber\\
      &  =\frac{2}{\sin 2\TT}\Bigl[- \altsum \HAtrbetatheta + \chi(C_2)\Bigr]
      -\tan\TT\; \chi(B).
      \label{eq:HA9}
\end{align}
Furthermore,
\begin{equation}\label{eq:HA10}
\TT\mapsto \log T(\cTT)-\log \tau \bigl(\cH_\TT\bigr)
\end{equation}
is differentiable for $0\le \TT <\pi/2$. Here, $\cH_\TT$
stands for either $\lectt$ or $\lecrr$.

\textup{2. } Under the additional assumption that the
$r_j$ are \emph{partial isometries} we have:
\begin{equation}\label{eq:HA10a}
 \frac{d}{d\TT}\log \tau(\cTT) = \frac{d}{d\TT} \log \tau(\lecrr),
\end{equation} 
and
\begin{align}
 \log\tau(\cTT)=& \log\tau(C_{1,r})+\log\tau(C_{2,r})\label{eq:HA12}\\
       &+\log\tau(\lecrr)\nonumber\\
       =& \log\tau(C_{1,r})+\log\tau(C_2)\label{eq:HA13}\\
       &+\log\tau(\lectt)+\log\cos\TT\; \chi(B).\nonumber
\end{align}
\end{theorem}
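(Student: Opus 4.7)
The plan is to imitate the strategy of Sections~\ref{s:GTP}--\ref{s:GF} in the purely algebraic setting, where the finite--dimensionality lets us avoid Prop.~\ref{p:GenDiffResult} altogether and rely only on Lemma~\ref{l:OMZ5}, Prop.~\ref{p:OMZ6}, and \Eqref{eq:OMZ26}. The two key geometric inputs of that argument are replaced here by algebraic analogues: the gauge transformation $\Phi_\TT$ is replaced by the explicit chain isomorphism
\[
 \phi_{\TT,\TT'}:\cTT\longrightarrow \cTT',\quad
 (\xi_1,\xi_2)\longmapsto \bigl(\xi_1,\tfrac{\tan\TT}{\tan\TT'}\xi_2\bigr),
\]
and the Stokes' theorem computation in \Eqref{eq:GTP13} is replaced by the explicit formula for the orthogonal projection onto $\cTT\subset C_1\oplus C_2$, namely $P_\cTT=\id-A^*(AA^*)^{-1}A$ with $A=\cost\,r_1-\sint\,r_2:C_1\oplus C_2\twoheadrightarrow B$ and $AA^*=\cos^2\TT\,r_1r_1^*+\sin^2\TT\,r_2r_2^*$.

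To prove \Eqref{eq:HA8} and \Eqref{eq:HA9} I would fit $\phi_{\TT,\TT'}$ (together with $\id$ on the $C_{j,r}$'s and $\psi_{\TT,\TT'}=(\sin\TT/\sin\TT')\id$ on $B$) into commutative diagrams extending \Eqref{eq:HA4} and \Eqref{eq:HA5}, mirroring \Eqref{eq:ExactSeq1CompLongExactCoh} and \Eqref{eq:ExactSeq3Comp}; then apply Lemma~\ref{l:OMZ5} to the induced chain isomorphisms of long exact cohomology sequences (whose own cohomology vanishes), and differentiate at $\TT'=\TT$. The computation of the determinant of $\phi_{\TT,\TT',*}$ at the cohomology level reproduces verbatim \Eqref{eq:100}--\Eqref{eq:101}, while $\Det(\psi_{\TT,\TT',*})$ and $\Det(\phi^+_{\TT,\TT',*})$ contribute Euler characteristic terms in closed form, yielding the right hand sides of \Eqref{eq:HA8}--\Eqref{eq:HA9}. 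To prove \Eqref{eq:HA7} I would apply Prop.~\ref{p:OMZ6} to the short exact sequence \Eqref{eq:HA4}; the term $\log\tau(\conerel)+\log\tau(C_2)$ is $\TT$--independent, so only $\log\tau(\lectt)$ (whose derivative is given by \Eqref{eq:HA8}) and $\sum_j(-1)^j\log\tau(0\to C^j_{1,r}\to(\cTT)^j\to C^j_2\to 0)$ contribute. By \Eqref{eq:OMZ26} the latter reduces to $-\tfrac12\sum_j(-1)^j\log\Det(\beta\beta^*_j)$ (since the inclusion of $C_{1,r}$ is isometric) with $\beta\beta^*=\id_{C_2}-\sin^2\TT\,r_2^*(AA^*)^{-1}r_2$; differentiating this gives exactly the missing trace $\altsum\HACCtrbetatheta$. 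For the differentiability of \Eqref{eq:HA10} at $\TT=0$ I would repeat verbatim the spectral--projection argument of Subsection~\ref{ss:diff}: since the Laplacian of $\cTT$ depends smoothly on $\TT$, the projector onto eigenspaces below a threshold $a>0$ is smooth near $0$, the sequence analogous to \Eqref{eq:diff4} is exact by openness of surjectivity, and Prop.~\ref{p:OMZ6} again absorbs the jump in $\dim H^p_\TT(\cTT)$.

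Part~2 is the simplest: under $r_jr_j^*=\id_B$ we get $AA^*=\id_B$, hence
\[
 \beta\beta^*=\id_{C_2}-\sin^2\TT\,r_2^*r_2=P_{\ker r_2}+\cos^2\TT\cdot P_{(\ker r_2)^\perp},
\]
so $\Det(\beta\beta^*_j)=\cos^{2\dim B^j}\TT$ and the correction term in Prop.~\ref{p:OMZ6} evaluates to $-\log\cos\TT\cdot \chi(B)$; substituting into the identity from Step~2 yields \Eqref{eq:HA13}. Equation \Eqref{eq:HA10a} then follows by comparing the derivative of \Eqref{eq:HA13} with \Eqref{eq:HA9} and using the elementary identity $\tan\TT=(2/\sin 2\TT)\sin^2\TT$, while \Eqref{eq:HA12} is obtained by chaining \Eqref{eq:HA13} with \Eqref{eq:HA11} and reorganizing via Prop.~\ref{p:OMZ6} applied to \Eqref{eq:HA3} for $j=2$. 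I expect the principal obstacle to lie in Step~2, specifically in verifying that the $\TT$--derivative of $\sum_j(-1)^j\log\Det(\beta\beta^*_j)$ reproduces the term $\altsum\HACCtrbetatheta$ in the form stated: the operators $r_jr_j^*$ need not commute and $(AA^*)^{-1}$ has no closed form without the partial isometry hypothesis, so the identification requires a careful use of the cyclicity of the trace to rewrite the derivative as a trace on the harmonic spaces $H^k_\TT(\cTT)$.
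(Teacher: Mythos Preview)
Your plan for \Eqref{eq:HA8}, \Eqref{eq:HA9}, \Eqref{eq:HA10}, \Eqref{eq:HA13}, and \Eqref{eq:HA10a} matches the paper's proof essentially line for line: the paper uses exactly the commutative diagrams you describe (cf.~\Eqref{eq:HA16}--\Eqref{eq:HA20}), applies Lemma~\ref{l:OMZ5} to the induced isomorphisms of long exact cohomology sequences, and differentiates at $\TT'=\TT$; for \Eqref{eq:HA10} it deduces differentiability from the identities \Eqref{eq:HA25}--\Eqref{eq:HA26} obtained via Prop.~\ref{p:OMZ6}.

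Where you diverge is \Eqref{eq:HA7}. You propose to obtain it from Prop.~\ref{p:OMZ6} applied to \Eqref{eq:HA4}, then differentiate and feed in \Eqref{eq:HA8}, leaving you with the derivative of $\tfrac12\sum_j(-1)^j\log\Det(\gb\gb^*_j)$ to identify with the chain--level trace term. That is correct but, as you suspect, unnecessarily awkward: it forces you through the projection formula $P_{\cTT}=\id-A^*(AA^*)^{-1}A$ and the inversion of $AA^*=\cos^2\TT\,r_1r_1^*+\sin^2\TT\,r_2r_2^*$. The paper bypasses this completely by applying Lemma~\ref{l:OMZ5} \emph{directly} to the chain isomorphism $\phi_{\TT,\TT'}:\cTT\to\cTTp$ (not to any long exact sequence). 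That single application produces
\[
 \log\tau(\cTT)=\log\tau(\cTTp)
  +\sum_j(-1)^j\log\Det\bigl(\phi_{\TT,\TT'}\restriction(\cTT)^j\bigr)
  -\sum_j(-1)^j\log\Det\bigl(\phi_{\TT,\TT',*}\restriction H^j(\cTT)\bigr),
\]
and differentiating at $\TT'=\TT$ yields both trace terms of \Eqref{eq:HA7} in one stroke: the Gram--matrix computation \Eqref{eq:100}--\Eqref{eq:101} works identically on $(\cTT)^j$ and on $H^j(\cTT)$, giving $-\tfrac{2}{\sin 2\TT}\Tr(\gb^*\gb)$ on each. No $(AA^*)^{-1}$ ever appears, and the ``principal obstacle'' you flag simply does not arise.

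Similarly for \Eqref{eq:HA12}: the paper does not chain \Eqref{eq:HA13} with \Eqref{eq:HA11} but rather applies Prop.~\ref{p:OMZ6} directly to the sequence \Eqref{eq:HA5}, and then shows under the partial--isometry hypothesis that $r_\TT^*(b)=(\sin\TT\,r_1^*b,\cos\TT\,r_2^*b)\in\cTT$ and $r_\TT r_\TT^*=\id_B$, so the correction term $\tfrac12\sum_j(-1)^j\log\Det(r_\TT r_\TT^*)$ vanishes and \Eqref{eq:HA26} collapses to \Eqref{eq:HA12}. Your chaining route would require an additional identity relating $\log\tau(\lecrr)$ to $\log\tau(\lectt)+\log\tau(\cH(C_{2,r},C_2,B))$ that is not immediate from the tools you list.
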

When comparing the last formula with Theorem \ref{thm:Theorem1}
one should note that for $\TT=\pi/4$ we have $\log\cos\TT=\log
\frac{1}{\sqrt{2}}=-\frac12 \log 2$.

\begin{proof} For $0<\TT,\TT'<\pi/2$ we have the cochain isomorphism
 (\textit{cf.} \Eqref{eq:ExactSeq1Comp})
\begin{equation}\label{eq:14}
 \phi_{\TT,\TT'}: \cTT\longrightarrow \cTTp,\quad (\xi_1,\xi_2)\mapsto
 (\xi_1,\frac{\tan\TT}{\tan\TT'} \xi_2),
\end{equation} 
hence by Lemma \ref{l:OMZ5}
\begin{multline}
 \log\tau(\cTT)=\log\tau(\cTTp)-
 \altsum \log\Det\bigl(\phi_{\TT,\TT'}\restriction H^j(\cTT)\bigr)\\
 +\altsum \log\Det\bigl(\phi_{\TT,\TT'}\restriction (\cTT)^j\bigr).
\end{multline}
Taking $\dThetaPrimeAtTheta$ yields \Eqref{eq:HA7}.

Next we look at the analogues of \Eqref{eq:ExactSeq1Comp}
and \Eqref{eq:ExactSeq1CompLongExactCoh}
\begin{equation}\label{eq:HA16}
\xymatrix{
0 \ar[r]  &C_{1,r} \ar[r]^-{\ga_\TT}\ar[d]^{\id} 
           &\cTT \ar[r]^-{\beta_\TT}\ar[d]^{\phi_{\TT,\TT'}}
     &C_2 \ar[r]\ar[d]^{\tilde\phi_{\TT,\TT'}} &  0\\
     0 \ar[r]  &C_{1,r} \ar[r]^-{\ga_{\TT'}} 
           &\cTTp \ar[r]^-{\beta_{\TT'}}  
     &C_2 \ar[r] &  0,
     }
\end{equation}
where $\tilde\phi_{\TT,\TT'}(\xi)=\frac{\tan\TT}{\tan\TT'}\xi$
and the corresponding isomorphism between the long exact
cohomology sequences
\begin{equation}\label{eq:HA17}
\xymatrix{
\ldots   H^k(C_{1,r}) \ar[r]^-{\ga_{\TT,*}}\ar[d]^{\id} 
           &H^k(\cTT) \ar[r]^-{\beta_{\TT,*}}\ar[d]^{\phi_{\TT,\TT',*}}
     &H^k(C_2) \ar[r]^-{\delta_\TT}\ar[d]^{\tilde\phi_{\TT,\TT',*}} 
     & H^{k+1}(C_{1,r})\ar[d]^{\id}\ldots\\
     \ldots   H^k(C_{1,r}) \ar[r]^-{\ga_{\TT',*}} 
           &H^k(\cTTp) \ar[r]^-{\beta_{\TT',*}}
     &H^k(C_2) \ar[r]^-{\delta_{\TT'}} 
     & H^{k+1}(C_{1,r})\ldots
     }
\end{equation}
Following the argument after \Eqref{eq:ExactSeq1CompLongExactCoh}
we find that
\begin{align}
 \dThetaPrimeAtTheta \log\Det\bigl(\phi_{\TT,\TT',*}^j\bigr)^2
 &= -2\frac{2}{\sin 2\TT} \HAtrbetatheta,\label{eq:HA18}\\
\dThetaPrimeAtTheta \log\Det\bigl(\tilde\phi_{\TT,\TT',*}^{j}\bigr)
&= -2\frac{2}{\sin 2\TT} \dim C_2^j\label{eq:HA19}
  \end{align}
  and hence with Lemma \ref{l:OMZ5} applied to \Eqref{eq:HA17}
we arrive at \Eqref{eq:HA8}.

The analogue of \Eqref{eq:ExactSeq3Comp} is 
\begin{equation}\label{eq:HA20}
\xymatrix{
0 \ar[r]  &C_{1,r}\oplus C_{2,r}
                \ar[r]^-{\gamma_1\oplus \gamma_2}\ar[d]^{\id\oplus
                \tilde\phi_{\TT,\TT'}} 
           &\cTT \ar[r]^-{r_\TT}\ar[d]^{\phi_{\TT,\TT'}}
           &B \ar[r]\ar[d]^{\psi_{\TT,\TT'}=\frac{\tan\TT}{\tan\TT'}\id} &  0\\
     0 \ar[r]  &C_{1,r}\oplus  C_{2,r} \ar[r]^-{\ga_{\TT'}} 
           &\cTTp \ar[r]^-{r_{\TT'}}  
     &B \ar[r] &  0.
     }
\end{equation}
We apply Lemma \plref{l:OMZ5} to the induced isomorphism of the long exact
cohomology sequences and find
\begin{multline}\label{eq:HA21}
 \log\tau(\lecrr)- \log\tau(\lecrrp)\\
 =-\altsum  \log\Det(\phi_{\TT,\TT',*}:H^j\to H^j)
   +\altsum \log\Det(\tilde\phi_{\TT,\TT',*}:H^j\to H^j)\\
   +\altsum \log\Det(\Psi_{\TT,\TT',*}:H^j\to H^j),
\end{multline}
where $H^j$ is shorthand for the respective cohomology groups.
Since $\tilde\phi_{\TT,\TT'}$ and $\phi_{\TT,\TT'}$ are multiplication
operators we have
\begin{align}
 \altsum \log\Det(\tilde\phi_{\TT,\TT',*}:H^j\to H^j)&= \chi(C_{2,r}) \,
 \log\frac{\tan\TT}{\tan\TT'},\label{eq:HA22}\\
 \altsum \log\Det(\Psi_{\TT,\TT',*}:H^j\to H^j) & = \chi(B)\,
 \log\frac{\sin\TT}{\sin\TT'},\label{eq:HA23}
\end{align}
and together with \Eqref{eq:HA18} we obtain
\begin{align} 
\frac{d}{d\TT}&\log \tau\bigl(\lecrr\bigr) =\nonumber\\
&  =\frac{2}{\sin 2\TT}\Bigl[- \altsum \HAtrbetatheta + \chi(C_{2,r})\Bigr]
-\frac{\cos\TT}{\sin\TT}\; \chi(B).\label{eq:HA24}
\end{align}
Taking into account $\chi(C_{2,r})=\chi(C_2)-\chi(B)$ 
(\textit{cf}. \Eqref{eq:HA3}) and
$\frac{\cos\TT}{\sin\TT}-\frac{2}{\sin2\TT}=-\tan\TT$ we find
\Eqref{eq:HA9}.

Next we apply Prop. \ref{p:OMZ6} to the exact sequence \Eqref{eq:HA4}
and get
\begin{align}
 \log\tau(\cTT)=& \log\tau(C_{1,r})+\log\tau(C_{2})\label{eq:HA25}\\
       &+\log\tau(\lectt)\nonumber\\
       &+\frac 12 \altsum \log\Det(\gb\gb^*:C_2^j\to C_2^j).\nonumber
\end{align}
Here we have used \Eqref{eq:OMZ26} and
that $\ga$ is a partial isometry and thus $\ga^*\ga=\id$.
Analogously, we infer from \Eqref{eq:HA5}
\begin{align}
 \log\tau(\cTT)=& \log\tau(C_{1,r})+\log\tau(C_{2,r})\label{eq:HA26}\\
       &+\log\tau(\lecrr)\nonumber\\
       &+\frac 12 \altsum \log\Det(r_{\TT}r_{\TT}^*:B^j\to B^j).\nonumber
\end{align}
From \Eqref{eq:HA25} and \eqref{eq:HA26} one deduces the differentiability
statement \Eqref{eq:HA10}.

Finally we discuss the case that the maps $r_j, j=1,2$ are partial
isometries. Then for $(\xi_1,\xi_2)\in\cTT, \eta\in B$ we calculate
\begin{equation}
 \begin{split}
  \scalar{r_\TT(\xi_1}{\xi_2),b}&= \sint\cdot \scalar{r_1\xi_1}{b}+\cost\cdot
  \scalar{r_2\xi_2}{b}\\
  &= \scalar{(\xi_1,\xi_2)}{(\sint\cdot r_1^* b, \cost\cdot r_2^* b)}.
 \end{split}
\end{equation}
If $r_1$ and $r_2$ are partial isometries then 
$(\sint\cdot r_1^* b, \cost\cdot \, r_2^* b)\in\cTT$ and hence it
equals $r_\TT^*(b)$. Consequently $r_\TT r_\TT^* b=(\sin^2\TT+\cos^2\TT) b=b$
and thus $\Det(r_\TT r_\TT^*:B^j\to B^j)=1$. 
Therefore \Eqref{eq:HA26} reduces to \Eqref{eq:HA12}. 

Similarly, one calculates
\begin{equation}
    \Det(\gb\gb^*:C_2^j\to C_2^j)= (1+\tan^2)^{-\dim B^j},
\end{equation}
then \Eqref{eq:HA13} follows from \Eqref{eq:HA25}.
\end{proof}


\bibliography{mlbib.bib,localbib.bib} 
\bibliographystyle{amsalpha-lmp}
\end{document}